\newcommand{\itg}{\mathbb{Z}}
\newcommand{\rtn}{\mathbb{Q}}
\newcommand{\rl}{\mathbb{R}}
\newcommand{\cx}{\mathbb{C}}
\newcommand{\ijbar}{i \bar{j}}
\newcommand{\ocal}{\mathcal{O}}
\newcommand{\ai}{\sqrt{-1}}
\newcommand{\surj}{\twoheadrightarrow}
\newcommand{\cst}{\mathrm{const}}
\newcommand{\kah}{K\"ahler }
\newcommand{\ke}{K\"ahler--Einstein }
\newcommand{\ddbar}{\partial \bar{\partial}}
\newcommand{\actson}{\curvearrowright}
\newcommand{\prj}{\mathbb{P}}
\theoremstyle{plain}
\newtheorem{theorem}{Theorem}[section]
\newtheorem{lemma}[theorem]{Lemma}
\newtheorem{corollary}[theorem]{Corollary}
\theoremstyle{definition}
\newtheorem{definition}[theorem]{Definition}
\newtheorem{conjecture}[theorem]{Conjecture}
\theoremstyle{definition}
\newtheorem{remark}[theorem]{Remark}
\begin{document}

\title{Scalar curvature and Futaki invariant of K\"ahler metrics with cone singularities along a divisor}
\author{Yoshinori Hashimoto}

\maketitle

\begin{abstract}
  We study the scalar curvature of K\"ahler metrics that have cone singularities along a divisor, with a particular focus on certain specific classes of such metrics that enjoy some curvature estimates. Our main result is that, on the projective completion of a pluricanonical bundle over a product of K\"ahler--Einstein Fano manifolds with the second Betti number 1, momentum-constructed constant scalar curvature K\"ahler metrics with cone singularities along the $\infty$-section exist if and only if the log Futaki invariant vanishes on the fibrewise $\mathbb{C}^*$-action, giving a supporting evidence to the log version of the Yau--Tian--Donaldson conjecture for general polarisations. \\
We also show that, for these classes of conically singular metrics, the scalar curvature can be defined on the whole manifold as a current, so that we can compute the log Futaki invariant with respect to them. Finally, we prove some partial invariance results for them.
\end{abstract}

\tableofcontents

\section{Introduction and the statement of the results}

\subsection{K\"ahler metrics with cone singularities along a divisor and log $K$-stability}


Let $D$ be a smooth effective divisor on a polarised \kah manifold $(X,L)$ of dimension $n$. Our aim is to study \kah metrics that have cone singularities along $D$, which can be defined as follows (cf.~\cite[\S2]{jmr}).

\begin{definition} \label{defcsmcgen}
A \textbf{K\"ahler metric with cone singularities along $D$ with cone angle $2 \pi \beta$} is a smooth \kah metric on $X \setminus D$ which satisfies the following conditions when we write $\omega_{sing} =  \sum_{i,j} g_{i \bar{j}} \ai dz_i \wedge d \bar{z}_j$ in terms of the local holomorphic coordinates $(z_1 , \dots , z_n)$ on a neighbourhood $U \subset X$ with $D \cap U = \{ z_1 =0 \}$:
\begin{enumerate}
\item $g_{1 \bar{1}} = F |z_1|^{2 \beta -2}$ for some strictly positive smooth bounded function $F$ on $X \setminus D$, 
\item $g_{1 \bar{j}} = g_{i \bar{1}}=  O( |z_1|^{ 2 \beta  -1})$, 
\item $g_{i \bar{j}} = O(1)$ for $i,j \neq 1$.
\end{enumerate}
\end{definition}

Although this definition makes sense for any $\beta \in \rl$, we are primarily interested in the case $0 < \beta <1$ (cf.~\cite{donconic}). On the other hand, we sometimes need to consider the case $ \beta > 1$ (cf.~Remark \ref{remcangle01}), while some results (e.g.~Theorem \ref{mtinvfutthird}) will hold only for $0 < \beta < 3/4$. We thus set our convention as follows: we shall assume $0 < \beta < 1$ in what follows, and specifically point out when this assumption is violated.


\begin{remark} \label{remusdefcsmet}
We recall that the usual (cf.~\cite{cds1, jmr, sw} amongst many others) definition of the conically singular \kah metric $\omega_{sing}$ is that $\omega_{sing}$ is a smooth \kah metric on $X \setminus D$ which is asymptotically quasi-isometric to the model cone metric $|z_1|^{2 \beta -2} \ai dz_1 \wedge d \bar{z}_1 + \sum_{i=2}^n \ai d z_i \wedge d \bar{z}_i$ around $D$, with coordinates $(z_1 , \dots , z_n)$ as above. The above definition is more restrictive than this usual definition, but will include all the cases that we shall treat in this paper (cf.~Definition \ref{csmefmcdef}).
\end{remark}

\begin{remark} \label{csmetcurr}
We can regard a conically singular metric $\omega_{sing}$ as a $(1,1)$-current on $X$, and hence can make sense of its cohomology class $[\omega_{sing}] \in H^2 (X , \rl)$.
\end{remark}



K\"ahler--Einstein metrics with cone singularities along a divisor, studied initially in \cite{jeffres,mcowen,tian96,troyanov}, attracted renewed interest since the foundational work of Donaldson \cite{donconic} on the linear theory of \ke metrics with cone singularities along a divisor. Since then, there has already been a huge accumulation of research on such metrics.


We now recall the \textbf{log $K$-stability}, which was introduced by Donaldson \cite{donconic} and played a crucially important role in proving the Yau--Tian--Donaldson conjecture (Conjecture \ref{usualdtyconj}) for Fano manifolds; see Remark \ref{logdtyfanosolv}. 
We first recall (cf.~Theorem \ref{cldfinvdiffform}) that the notion of $K$-stability can be regarded as an ``algebro-geometric generalisation'' of the vanishing of the Futaki invariant
\begin{equation*}
\mathrm{Fut} (\Xi_f , [ \omega ]) = \int_X f (S(\omega) - \bar{S}) \frac{\omega^n}{n!} 
\end{equation*}
in the sense that $\mathrm{Fut} (\Xi_f , [ \omega ]) =0$ is equivalent to $DF( \mathcal{X} , \mathcal{L})=0$ for the product test configuration $( \mathcal{X} , \mathcal{L})$ generated by $\Xi_f$ (cf.~Remark \ref{1psinaut0eqprtc}). 
Looking at the product \textit{log} test configurations, we have an analogue of the Futaki invariant in the log case, which was first introduced by Donaldson \cite{donconic}. It is defined as 
\begin{equation*}
\text{Fut}_{D ,\beta} (\Xi_f , [\omega]) = \frac{1}{2 \pi} \int_X f (S(\omega) - \bar{S}) \frac{\omega^n}{n!} - (1- \beta) \left( \int_D f \frac{\omega^{n-1}}{(n-1)!} - \frac{\text{Vol} (D, \omega)}{\text{Vol}(X , \omega)} \int_X f \frac{\omega^n}{n!} \right),
\end{equation*}
and may be called the \textbf{log Futaki invariant} (cf.~\S \ref{sclgkstab}, particularly Theorem \ref{dfdiff}). As in the case of the (classical) Futaki invariant, $\text{Fut}_{D,\beta}$ is expected to vanish on \kah classes which contain a \ke or constant scalar curvature \kah metric with cone singularities along $D$ with cone angle $2 \pi \beta$.\footnote{This certainly holds for \ke metrics on Fano manifolds; see \cite[Theorem 2.1]{sw} and also \cite[Theorem 7]{cds3}.}

Now, in view of the work of Donaldson \cite{dongauge, donproj1, dontoric}, we are naturally led to the idea of replacing the ample $-K_X$ by an arbitrary ample line bundle $L$, on a general smooth projective variety $X$, and consider the constant scalar curvature K\"ahler metrics in $c_1 (L)$ with cone singularities along a divisor $D$ (cf.~Remark \ref{csmetcurr}). Conically singular metrics having the constant scalar curvature can be defined as follows.

\begin{definition} \label{defofcsckncpt}
A \kah metric $\omega_{sing}$ with cone singularities along $D$ with cone angle $2 \pi \beta$ is said to be of \textbf{constant scalar curvature K\"ahler} or $\textbf{cscK}$ if its scalar curvature $S(\omega_{sing})$, which is a well-defined smooth function on $X \setminus D$, satisfies $S(\omega_{sing}) = \cst$ on $X \setminus D$.
\end{definition}

\begin{remark}
There are several important points when we consider cscK metrics with cone singularities in $c_1(L)$, which we list as follows.
\begin{enumerate}
	\item  Unlike in the Fano case where $D \in |- \lambda K_X|$ for some $\lambda \in \mathbb{N}$ is natural, $D$ and $L$ can be chosen completely independently; $D$ can be any smooth effective divisor in $X$ and the corresponding line bundle $\ocal_X (D)$ does not even have to be ample.
	\item There are several definitions of cscK metrics with cone singularities for general polarisations that appeared in the literature, such as \cite{kellerzheng, li15, lizheng}. 
	\item Compared with the conically singular \ke metrics that are discussed above, there seem to be relatively few results concerning conically singular \kah metrics in a general polarisation and many basic properties of conically singular cscK metrics seem yet to be clarified. In particular, there are very few known examples of such metrics. There is, however, a growing number of results \cite{cz, keller14, kellerzheng, lih12, li15, lizheng, zheng15} on this problem appearing in the literature.
\end{enumerate}
\end{remark}

\begin{remark} \label{remvolcs}
In general, if $\omega_{sing}$ is a metric with cone singularities along $D$ (as in Remark \ref{remusdefcsmet}), then it follows that any $f \in C^{\infty} (X, \rl)$ is integrable with respect to the measure $\omega^n_{sing}$ on any open set $U \subset X \setminus D$; this is because there exist positive constants $C_1 , C_2$ such that
\begin{equation*}
C_1 |z_1|^{2 \beta -2} \prod_{i=1}^n \ai dz_i \wedge d \bar{z}_i \le \omega^n_{sing} \le C_2 |z_1|^{2 \beta -2} \prod_{i=1}^n \ai dz_i \wedge d \bar{z}_i 
\end{equation*}
locally around $D$, which is locally integrable on $\cx^n \cap \{ z_1 \neq 0 \}$.

In particular, the volume $\int_{X \setminus D} \omega^{n}_{sing}$ of $X \setminus D$ is finite. By regarding $\omega^{n}_{sing}$ as an absolutely continuous measure on the whole of $X$, we shall write $\mathrm{Vol} (X , \omega_{sing}) := \int_{X \setminus D} \omega^{n}_{sing}$ in what follows.

\end{remark}

\subsection{Momentum-constructed metrics and log Futaki invariant} \label{intromc}

The study of cscK metrics is considered to be much harder than that of \ke metrics, since there is no analogue of the complex Monge--Amp\`ere equation which reduces the fourth order fully nonlinear partial differential equation (PDE) to a second order fully nonlinear PDE. However, when the space $X$ is endowed with some symmetry, it is often possible to simplify the PDE by exploiting the symmetry of the space $X$. One such example, which we shall treat in detail in what follows, is the \textbf{momentum construction} introduced by Hwang \cite{hwang} and generalised as in \cite{acgi, acgtfii, acgtfiii, hs} which works, for example, when $X$ is the projective completion $\prj (\mathcal{F} \oplus \cx)$ of a pluricanonical bundle $\mathcal{F}$ over a product of \ke manifolds (see \S \ref{bgmc} for details). The point is that this theory converts the cscK equation to a \textit{second order linear ordinary differential equation (ODE)}, as we recall in \S \ref{bgmc}.


Moreover, it is also possible to describe the cone singularities in terms of the boundary value of the function called momentum profile; a detailed discussion on this can be found in \S \ref{mccone}. This means that we have on $\prj(\mathcal{F} \oplus \cx)$ a particular class of conically singular metrics, which we may call \textbf{momentum-constructed conically singular metrics}, whose scalar curvature is easy to handle.

By using the above theory of momentum construction, we obtain the following main result of this paper. Suppose that $(M , \omega_M)$ is a product of \ke Fano manifolds $(M_i , \omega_i)$, $i =1, \dots , r$, each with $b_2 (M_i) = 1$, and of dimension $n_i$ so that $n-1 = \sum_{i=1}^r n_i$. Let $\mathcal{F} := \bigotimes_{i=1}^r p_i^* K_i^{\otimes l_i} $, $l_i \in \itg$, $K_i$ be the canonical bundle of $M_i$, and $p_i : M \surj M_i$ be the obvious projection. The statement is as follows. 

\begin{theorem} \label{mainthmmc}
Let $\mathbb{X} := \prj(\mathcal{F} \oplus \cx)$, and write $D$ for the $\infty$-section of $\prj(\mathcal{F} \oplus \cx)$ and $\Xi$ for the generator of the fibrewise $\cx^*$-action. Then, each \kah class $[ \omega ] \in H^2 (\mathbb{X} , \rl)$ of $\mathbb{X}$ admits a momentum-constructed cscK metric with cone singularities along $D$ with cone angle $2 \pi \beta \in [0,  \infty )$ if and only if $\textup{Fut}_{D ,\beta} (\Xi , [ \omega ]) =0$. 
\end{theorem}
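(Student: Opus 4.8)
The plan is to exploit the momentum construction recalled in \S\ref{bgmc} and \S\ref{mccone}, which encodes a momentum-constructed metric on $X = \prj(\mathcal{F} \oplus \cx)$ by a single \emph{momentum profile} $\varphi(\tau)$ defined on a closed interval $[\tau_-, \tau_+]$ (the image of the moment map for $\Xi$), and which converts the cscK equation $S(\omega) = \cst$ into a second-order linear ODE for $\varphi$. Writing the ODE in the self-adjoint form $(p\varphi)'' = F - S p$, where $p(\tau) > 0$ on $[\tau_-, \tau_+]$ is the polynomial recording the fibrewise volume (built from the $n_i$ and the $l_i$) and $F(\tau)$ is the explicit term coming from the Ricci curvatures of the \ke factors $(M_i, \omega_i)$, the problem becomes: solve this ODE subject to the boundary data that pin down the geometry at the two ends. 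The key point, recalled in \S\ref{mccone}, is that smoothness of $\omega$ along the zero-section forces $\varphi(\tau_-) = 0$ and $\varphi'(\tau_-) = 1$, whereas a cone singularity of angle $2\pi\beta$ along the $\infty$-section $D$ is precisely the boundary data $\varphi(\tau_+) = 0$ and $\varphi'(\tau_+) = -\beta$; the value $\beta = 1$ recovers the smooth metric.

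First I would solve the ODE explicitly by two integrations. Setting $\Phi := p\varphi$, the conditions $\Phi(\tau_-) = 0$ and $\Phi'(\tau_-) = p(\tau_-)$ (coming from smoothness at the zero-section) fix the two constants of integration, so that $\Phi$ depends only on the remaining unknown constant $S$. Integrating $\Phi'' = F - Sp$ once and imposing the Neumann datum $\Phi'(\tau_+) = -\beta\, p(\tau_+)$ then yields a single linear equation that determines $S$ uniquely. This leaves exactly one leftover scalar condition, namely the Dirichlet datum $\Phi(\tau_+) = 0$, equivalently $\int_{\tau_-}^{\tau_+}\Phi'(\tau)\,d\tau = 0$; since $\Phi$ is by now completely explicit in terms of $F$, $p$ and the determined $S$, this is a single compatibility condition on the pair $(\beta, [\omega])$.

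The central step is then to identify this compatibility condition with the vanishing of the log Futaki invariant. Using the invariance of $\textup{Fut}_{D,\beta}(\Xi, [\omega])$ within the \kah class, I would evaluate it on a momentum-constructed representative: the boundary term over $D$ and the bulk term each reduce to weighted integrals of $p$ and $F$ against $\tau$, and after an integration by parts (using $\varphi(\tau_\pm) = 0$) one checks that $\textup{Fut}_{D,\beta}(\Xi, [\omega])$ is a nonzero constant multiple of the leftover integral $\int_{\tau_-}^{\tau_+}\Phi'$. This settles both directions simultaneously at the level of the ODE: a profile satisfying all four boundary conditions exists precisely when the compatibility integral, and hence $\textup{Fut}_{D,\beta}$, vanishes. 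The ``only if'' direction then follows at once, since any momentum-constructed cscK metric with cone angle $2\pi\beta$ produces such a profile solving the over-determined system.

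The remaining, and I expect hardest, step is to upgrade the formal ODE solution to a genuine conically singular \kah metric, which requires verifying that the explicit $\varphi$ is \emph{strictly positive} on the open interval $(\tau_-, \tau_+)$; this positivity is what guarantees that the momentum-constructed form is a bona fide \kah metric with the prescribed cone structure (cf.\ Definition \ref{csmefmcdef}). Since $\varphi = \Phi/p$ with $p > 0$, it suffices to control the sign of $\Phi$, which vanishes at both endpoints with $\Phi'(\tau_-) = p(\tau_-) > 0$ and $\Phi'(\tau_+) = -\beta\, p(\tau_+) \le 0$. I would rule out an interior sign change by a Sturm-type variation-of-sign argument: because $F$ and $p$ are explicit functions with controlled structure inherited from the \ke Fano factors with $b_2(M_i) = 1$, the right-hand side $\Phi'' = F - Sp$ changes sign only a bounded number of times, which restricts the oscillation of $\Phi$ enough to force $\Phi > 0$ in the interior for every $\beta \in [0, \infty)$. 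Making this sign analysis uniform across the full range of cone angles, including $\beta \ge 1$, is the main obstacle, and is exactly where the hypotheses on the base manifolds $M_i$ are essential.
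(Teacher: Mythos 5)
Your proposal follows essentially the same route as the paper's proof: reduce the cscK equation to the second-order linear ODE $(\varphi Q)'' = 2(R-\sigma_0')Q$ for the momentum profile, observe that the four boundary conditions over-determine the problem so that a unique cone angle $\beta$ is singled out in each class, verify interior positivity of the profile, and identify that $\beta$ with the unique root of $\textup{Fut}_{D,\beta}(\Xi,[\omega])=0$ computed on the smooth momentum-constructed (extremal) representative supplied by Hwang's theorem. Two points you should not gloss over, both handled explicitly in the paper: the positivity of $\varphi$ on $(-b,b)$ is obtained by invoking Hwang's Appendix A (this is where the Fano hypothesis on the $M_i$ enters), not by a new Sturm-type sign count, and the hypothesis $b_2(M_i)=1$ is needed beforehand (via Leray--Hirsch and the fibrewise-volume computation of Lemma \ref{lemfbwv4pbps}) to show that \emph{every} K\"ahler class of $X$ is represented by momentum-constructed data --- a step your proposal implicitly assumes when it starts from a momentum interval.
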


In fact, $\beta \neq 1$ since $\prj(\mathcal{F} \oplus \cx)$ admits no cscK metrics as $\mathcal{F} \oplus \cx$ is Mumford unstable \cite[Theorem 5.13]{rt}. The reader is referred to \S \ref{bgmc} for more details on this statement, including where the various hypotheses on $\mathbb{X}$ came from. See also Remark \ref{remcangle01} for some examples.

\begin{remark} \label{intremcangle01}
Note that the value of $\beta$ for which this happens is unique in each \kah class $[ \omega ] \in H^2 (\mathbb{X} , \rl)$, given by the equation $\textup{Fut}_{D ,\beta} (\Xi , [ \omega ]) =0$ which we can re-write as
\begin{equation*}
\beta  = 1- \mathrm{Fut} (\Xi , [ \omega ]) \left(\int_D f \frac{\omega^{n-1}}{(n-1)!} - \frac{\mathrm{Vol}(D , \omega)}{\mathrm{Vol}(\mathbb{X}, \omega)} \int_{\mathbb{X}} f \frac{\omega^n}{n!} \right)^{-1} ,
\end{equation*}
where $f$ is the holomorphy potential of $\Xi$; the denominator in the second term is equal to $Q(b) (b -B/A)$ in the notation of (\ref{lfutcangledeninv}), which is strictly positive. We also need to note that we do \textit{not} necessarily have $0 < \beta < 1$; although we can show $\beta \ge 0$, there are examples where $\beta >1$. See Remark \ref{remcangle01} for more details.

\end{remark}

\begin{remark}
A naive re-phrasing of the above result is that each rational \kah class (or polarisation) of $\mathbb{X} = \prj(\mathcal{F} \oplus \cx)$ admits a momentum-constructed cscK metric with cone singularities along $D$ with cone angle $2 \pi \beta$ if and only if it is log $K$-polystable with cone angle $2 \pi \beta$ with respect to the product log test configuration generated by the fibrewise $\cx^*$-action on $\mathbb{X}$. To the best of the author's knowledge, this is the first supporting evidence for the log Yau--Tian--Donaldson conjecture (Conjecture \ref{logdtyconj}) for the polarisations that are not anticanonical.

\end{remark}



\subsection{Log Futaki invariant computed with respect to the conically singular metrics} \label{intscmsfi}
Although the log Futaki invariant is conjectured to be related to the existence of conically singular cscK metrics, the log Futaki invariant itself is computed with respect to a \textit{smooth} \kah metric in $c_1 (L)$. We now consider the following question: what is the value of the log Futaki invariant if we compute it with respect to a conically singular \kah metric?\footnote{Auvray \cite{auvray} established an analogous result for the Poincar\'e type metric, which can be regarded as the $\beta =0$ case.} Namely, we wish to compute $\mathrm{Fut}_{D , \beta} (\Xi_f , \omega_{sing})$ defined as
\begin{align*}
&\int_{X} f \left( \mathrm{Ric} (\omega_{sing}) - \frac{\overline{S} (\omega_{sing})}{n} \omega_{sing} \right) \wedge \frac{\omega^{n-1}_{sing}}{(n-1)!} \\
&\ \ \ \ \ \ \ \ \ \  - 2 \pi (1 - \beta) \left( \int_D f \frac{\omega^{n-1}_{sing}}{(n-1)!}  - \frac{\text{Vol} (D ,\omega_{sing})}{\text{Vol} (X , \omega_{sing})} \int_X f \frac{\omega^n_{sing}}{n!} \right) ,
\end{align*}
where $\overline{S} (\omega_{sing}) : = \frac{1}{\text{Vol} (X , \omega_{sing})} \int_X \mathrm{Ric} (\omega_{sing}) \wedge \frac{\omega^{n-1}_{sing}}{(n-1)!}$. However, this is not a priori well-defined for any conically singular metric $\omega_{sing}$; first of all $\int_D f \frac{\omega^{n-1}_{sing}}{(n-1)!}$ does not naively make sense as $\omega_{sing}$ is not well-defined on $D$, and also it is not obvious that the integral $ \int_X \mathrm{Ric} (\omega_{sing}) \wedge \frac{\omega^{n-1}_{sing}}{(n-1)!}$ or $\int_{X} f  \mathrm{Ric} (\omega_{sing})  \wedge \frac{\omega^{n-1}_{sing}}{(n-1)!}$ makes sense.\footnote{Note that $\text{Vol}(X , \omega_{sing})$ does make sense by Remark \ref{remvolcs}.}

In what follows, we do not claim any result on this problem that is true for \textit{all} conically singular metrics, and restrict our attention to the case where the conically singular metric $\omega_{sing}$ has some ``preferable'' form. By this, we mean that $\omega_{sing}$ is either of the following types.
\begin{definition} \label{csmefmcdef}
\
\begin{enumerate}
\item Let $\ocal_X (D)$ be the line bundle associated to $D$ and $s$ be a global section that defines $D$ by $\{ s=0 \}$. Giving a hermitian metric $h$ on $\ocal_X (D)$, we define $\hat{\omega} := \omega + \lambda \ai \ddbar |s|^{2 \beta}_h$ which is indeed a \kah metric if $\lambda >0$ is chosen to be sufficiently small. Metrics of such form have been studied in many papers (\cite{bre, cz, donconic, jmr} amongst others). In this paper, we call such a metric $\hat{\omega}$ a conically singular metric \textbf{of elementary form}.
\item When $\mathbb{X}$ is a projective completion $\prj (\mathcal{F} \oplus \cx)$ of a line bundle $\mathcal{F}$ over a \kah manifold $M$, with the projection map $p: \mathcal{F} \to M $, we can consider a \textbf{momentum-constructed} metric $\omega_{\varphi}$ (as we mentioned in \S \ref{intromc}; see also \S \ref{bgmc} for the details). We have an explicit description of cone singularities, as we shall see in \S  \ref{mccone}.
\end{enumerate}
\end{definition}
Throughout in what follows, we shall write $X$ to denote a projective \kah manifold, and $\mathbb{X}$ for the projective completion $\prj (\mathcal{F} \oplus \cx)$.

What is common in the above two classes of metrics is that they can be written as a sum of a globally defined smooth differential form and a term of order $O(|z_1|^{2 \beta})$, together with some more explicit estimates on the second $O(|z_1|^{2 \beta})$ term, which will be important for us in proving that these metrics enjoy some nice estimates on the Ricci (and scalar) curvature (cf.~\S \ref{mccone}, \S \ref{eleestcstf}); see also Remark \ref{gencsmacnd}.

For these types of metrics, $\hat{\omega}$ and $\omega_{\varphi}$, we first show that $\mathrm{Ric} (\hat{\omega}) \wedge \hat{\omega}^{n-1}$ and $\mathrm{Ric} (\omega_{\varphi}) \wedge \omega_{\varphi}^{n-1}$ define a current that is well-defined on the whole manifold. In fact, we can even show that they are well-defined as a current on any open subset $\Omega$ in $X$, as stated in the following. They are the main technical results that are used in what follows to compute the log Futaki invariant.


\begin{theorem} \label{scaldist}
Let $\hat{\omega}$ be a conically singular \kah metric of elementary form $\hat{\omega} = \omega + \lambda \ai \ddbar |s|^{2 \beta}_h$ with $0 < \beta <1$. Then the following equation
\begin{equation*}
\int_{\Omega} f \mathrm{Ric}(\hat{\omega}) \wedge \frac{\hat{\omega}^{n-1}}{(n-1)!} = \int_{\Omega \setminus D} f S (\hat{\omega}) \frac{\hat{\omega}^n}{n!} + 2  \pi (1 - \beta) \int_{\Omega \cap D} f \frac{\omega^{n-1}}{(n-1)!}
\end{equation*}
holds for any open set $\Omega \subset X$ and any $f \in C^{\infty} (X , \rl)$, and all the integrals are finite.
\end{theorem}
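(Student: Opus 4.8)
The plan is to treat $\ric(\hat\omega)$ as a globally defined closed $(1,1)$-current on $X$ and to compute the resulting $(n,n)$-current (measure) $\ric(\hat\omega)\wedge\frac{\hat\omega^{n-1}}{(n-1)!}$, the boundary term along $D$ arising as a Poincar\'e--Lelong contribution. Fix a smooth \kah form $\omega$ on $X$ and a coordinate chart meeting $D$ with $D = \{ z_1 = 0\}$ and $s = z_1$. From the elementary form and the estimates of \S\ref{eleestcstf}, the density of $\hat\omega$ relative to $\omega$ has the structure
\[ \frac{\hat\omega^n}{\omega^n} = |z_1|^{2\beta - 2} G, \qquad G = G_0\bigl(1 + O(|z_1|^{2\beta})\bigr), \]
with $G_0$ smooth and strictly positive; in particular $\log G = O(1)$ and $\partial_{z_1}\log G = O(|z_1|^{2\beta-1})$. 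Consequently, on $X\setminus D$,
\[ \ric(\hat\omega) = \ric(\omega) - \ai\ddbar\log\frac{\hat\omega^n}{\omega^n} = \ric(\omega) - (\beta-1)\,\ai\ddbar\log|z_1|^2 - \ai\ddbar\log G, \]
and since $\log\frac{\hat\omega^n}{\omega^n} = (\beta-1)\log|z_1|^2 + \log G$ is locally integrable, the right-hand side extends across $D$ to a closed $(1,1)$-current, which I take as $\ric(\hat\omega)$.

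Because the wedge of this current with the singular form $\frac{\hat\omega^{n-1}}{(n-1)!}$ is not a priori meaningful, I would first \emph{define} it, for $f \in C^\infty(X,\rl)$, by moving one $\ai\ddbar$ onto $f$:
\[ \Bigl\langle \ric(\hat\omega)\wedge\tfrac{\hat\omega^{n-1}}{(n-1)!},\, f\Bigr\rangle := \int_X f\,\ric(\omega)\wedge\frac{\hat\omega^{n-1}}{(n-1)!} - \int_X \log\frac{\hat\omega^n}{\omega^n}\;\ai\ddbar f\wedge\frac{\hat\omega^{n-1}}{(n-1)!}. \]
Both integrals converge absolutely: by Remark~\ref{remvolcs} the form $\frac{\hat\omega^{n-1}}{(n-1)!}\wedge(\textup{smooth }(1,1))$ is integrable, and $\log\frac{\hat\omega^n}{\omega^n} = O(\log|z_1|)$ is integrable against it since $\int_0^1 r^{2\beta-1}|\log r|\,dr < \infty$. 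Once the identity is proven for $\Omega = X$ and all smooth $f$, the current is identified with the stated measure and the general-$\Omega$ statement follows by restricting that measure; moreover, localising with a partition of unity, away from $D$ the assertion is the classical trace relation $\ric(\hat\omega)\wedge\frac{\hat\omega^{n-1}}{(n-1)!} = S(\hat\omega)\frac{\hat\omega^n}{n!}$, so only a neighbourhood of $D$ requires work. Finiteness of the three integrals in the statement then follows from the above together with the a priori bound that $S(\hat\omega)$ is bounded near $D$ (also from \S\ref{eleestcstf}), which gives $\int_{\Omega\setminus D}|f|\,|S(\hat\omega)|\frac{\hat\omega^n}{n!} < \infty$.

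The core computation is to evaluate the second integral above by reversing the integration by parts with a cut-off. Writing $g := \log\frac{\hat\omega^n}{\omega^n}$ and $\alpha := \frac{\hat\omega^{n-1}}{(n-1)!}$, which is $d$-closed on $X\setminus D$, dominated convergence gives $\int_X g\,\ai\ddbar f\wedge\alpha = \lim_{\epsilon\to 0}\int_{X\setminus U_\epsilon} g\,\ai\ddbar f\wedge\alpha$ with $U_\epsilon = \{|z_1| < \epsilon\}$, and on $X\setminus U_\epsilon$ the Green-type identity for $\ai\ddbar = dd^c$ and closed $\alpha$ (the mixed terms $(df\wedge d^c g - dg\wedge d^c f)\wedge\alpha$ vanish for bidegree reasons) yields
\[ \int_{X\setminus U_\epsilon} g\,\ai\ddbar f\wedge\alpha = \int_{X\setminus U_\epsilon} f\,\ai\ddbar g\wedge\alpha + \int_{\partial(X\setminus U_\epsilon)} \bigl(g\,d^c f - f\,d^c g\bigr)\wedge\alpha. \]
As $\epsilon\to 0$ the first integral tends to $\int_{X\setminus D} f\,\ai\ddbar g\wedge\alpha = \int_X f\,\ric(\omega)\wedge\alpha - \int_{X\setminus D} f\,S(\hat\omega)\frac{\hat\omega^n}{n!}$; substituting back into the definition cancels the $\ric(\omega)$ terms and leaves $\int_{X\setminus D} f\,S(\hat\omega)\frac{\hat\omega^n}{n!}$ minus the limit of the boundary term. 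For the latter, on $\{|z_1| = \epsilon\}$ one has $dz_1 = \ai\epsilon e^{\ai\theta}d\theta$, so the $dz_1\wedge d\bar z_1$-part of $\alpha$ drops out and only the transverse part survives, which converges to $\frac{\omega^{n-1}}{(n-1)!}\big|_D$ (here one uses $\hat\omega - \omega = O(|z_1|^{2\beta})$, vanishing on $D$); the $g\,d^c f$ term is $O(\epsilon\log\epsilon)\to 0$, while $d^c g$ contributes only through its angular component, for which $d^c\bigl((\beta-1)\log|z_1|^2\bigr)\big|_{\{|z_1|=\epsilon\}} = (\beta-1)\,d\theta$. After $\int_0^{2\pi}d\theta = 2\pi$ and fixing the orientation to match the Poincar\'e--Lelong normalisation $\ai\ddbar\log|z_1|^2 = 2\pi[D]$, the boundary term contributes (with the sign dictated by the definition) exactly $2\pi(1-\beta)\int_{\Omega\cap D} f\,\frac{\omega^{n-1}}{(n-1)!}$.

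The main obstacle is controlling the non-explicit remainder $\log G$ of the density and showing it contributes nothing to the boundary term in the limit. After the restriction $dz_1 \mapsto \ai\epsilon e^{\ai\theta}d\theta$, the angular coefficient of $d^c\log G$ on $\{|z_1|=\epsilon\}$ is of size $\epsilon\,|\partial_{z_1}\log G| = O(\epsilon^{2\beta}) + O(\epsilon)$, which tends to $0$ since $\beta > 0$; this is precisely where the structural expansion and the derivative estimate $\partial_{z_1}\log G = O(|z_1|^{2\beta-1})$ from \S\ref{eleestcstf} are needed, and similarly the boundedness of $S(\hat\omega)$ underlies the finiteness claims. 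Granting these estimates, the rest is the Poincar\'e--Lelong bookkeeping above, and the identical argument—now with the explicit description and estimates of \S\ref{mccone}—handles the momentum-constructed metrics $\omega_{\varphi}$, establishing the companion current identity used to compute the log Futaki invariant.
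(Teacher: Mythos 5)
Your proposal is correct in substance and reaches the identity by the same underlying strategy as the paper -- localise near $D$, cut off at $\{|z_1|=\epsilon\}$, integrate by parts, and extract the $2\pi(1-\beta)\int_D f\,\omega^{n-1}/(n-1)!$ term from the angular part of $d^c\log|z_1|^2$ on the boundary -- but the bookkeeping is organised differently. The paper splits $\mathrm{Ric}(\hat\omega)\wedge\hat\omega^{n-1}$ into $(1-\beta)\,\ai\ddbar\log|z_1|^2\wedge\hat\omega^{n-1}$ plus a remainder $\ai R\wedge\hat\omega^{n-1}$ and proves two separate lemmas; in the Poincar\'e--Lelong lemma it must then kill the bulk cross term $\int d^c\log|z_1|^2\wedge df\wedge\hat\omega^{n-1}$ by a \emph{second} integration by parts, which is where the auxiliary form $\tilde\omega$ (the metric with its $dz_1\wedge d\bar z_1$-component removed) and the estimates on $\tilde\omega|_{\partial V_\epsilon}$ and $d\tilde\omega^{n-1}$ enter. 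You instead apply the symmetric Green identity to $g=\log(\hat\omega^n/\omega^n)$ all at once; the cross terms $dg\wedge d^cf\wedge\alpha$ and $df\wedge d^cg\wedge\alpha$ cancel identically because $\alpha$ has bidegree $(n-1,n-1)$, so only boundary integrals remain. That is a genuine (if modest) streamlining: it trades the paper's three-term bulk estimate (\ref{bounv}) for a slightly longer boundary analysis.

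Three points need repair or expansion. First, $S(\hat\omega)$ is \emph{not} bounded near $D$ in general: Lemma \ref{lemrictyf} gives $S(\hat\omega)=O(1)+O(|z_1|^{2-4\beta})$, which blows up for $\beta>1/2$. The finiteness of $\int_{\Omega\setminus D}|f||S(\hat\omega)|\,\hat\omega^n$ still holds, but by the integrability of $(r^{2\beta-2}+r^{-2\beta})\,r\,dr$ near $0$ (Remark \ref{csscall1}), not by boundedness. Second, your expansion $G=G_0(1+O(|z_1|^{2\beta}))$ omits the $O(|z_1|^{2-2\beta})$ terms contributed by the smooth part of $\hat\omega$ to the volume density (cf.~(\ref{tepoovpmcmf})); consequently $\partial_{z_1}\log G$ also carries an $O(|z_1|^{1-2\beta})$ piece. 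This is harmless -- $\epsilon\cdot O(\epsilon^{1-2\beta})\to0$ for $\beta<1$ -- but should be stated. Third, in the boundary term you only estimate the angular ($d\theta$) component of $d^c g$; you must also check the components along $dz_j$, $j\ge2$, which pair with the mixed terms $\hat g_{1\bar j}\,dz_1\wedge d\bar z_j=O(|z_1|^{2\beta-1})\,dz_1\wedge d\bar z_j$ of $\hat\omega^{n-1}$, giving $O(\epsilon^{2\beta})$ after restricting $dz_1=\ai\epsilon e^{\ai\theta}d\theta$; the same remark applies to the $g\,d^cf$ term, whose true order is $O(\epsilon^{2\beta}\log\epsilon)$ rather than $O(\epsilon\log\epsilon)$. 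These are exactly the estimates the paper encodes in $\tilde\omega|_{\partial V_\epsilon}$; with them supplied, your argument closes.
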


\begin{theorem} \label{scaldistmc}
Let $p: \mathcal{F} \to M$ be a holomorphic line bundle with hermitian metric $h_{\mathcal{F}}$ over a \kah manifold $(M , \omega_M)$, and $\omega_{\varphi}$ be a momentum-constructed conically singular \kah metric on $\mathbb{X} := \prj (\mathcal{F} \oplus \cx)$ with a real analytic momentum profile $\varphi$ and $0< \beta <1$. Then the following equation
\begin{equation*}
\int_{\Omega} f \mathrm{Ric} (\omega_{\varphi}) \wedge \frac{\omega_{\varphi}^{n-1}}{(n-1)!} = \int_{\Omega \setminus D} f S (\omega_{\varphi}) \frac{\omega_{\varphi}^n}{n!} + 2  \pi (1 - \beta)  \int_{\Omega \cap D} f \frac{p^* \omega_M(b)^{n-1}}{(n-1)!}
\end{equation*}
holds for any open set $\Omega \subset \mathbb{X}$ and any $f \in C^{\infty} (\mathbb{X} , \rl)$, and all the integrals are finite, where $\omega_M (b)$ is as defined in (\ref{defofomegamtau}).
\end{theorem}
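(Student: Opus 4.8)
The plan is to follow the same strategy as in Theorem \ref{scaldist}, namely to interpret $\mathrm{Ric}(\omega_\varphi)$ as a $(1,1)$-current on the whole of $X$ via the distributional $\ai\ddbar$ of $\log(\omega_\varphi^n/\Omega_0)$ for a fixed smooth volume form $\Omega_0$, and to isolate a Poincar\'e--Lelong contribution concentrated on $D$. Concretely, I would first use the explicit description of $\omega_\varphi$ from \S\ref{bgmc} and \S\ref{mccone}: choosing local holomorphic coordinates $(z_1,\dots,z_n)$ with $D=\{z_1=0\}$, where $z_1$ is a fibre coordinate and the momentum variable $\tau\to b$ as $z_1\to 0$, the real analyticity of $\varphi$ yields an expansion $\log(\omega_\varphi^n/\Omega_0)=(\beta-1)\log|z_1|^2+\psi_0$ with $\psi_0$ a bounded function whose behaviour near $D$ is controlled (it is $O(|z_1|^{2\beta})$, together with the corresponding derivative bounds). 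Writing $R_{\mathrm{reg}}:=\mathrm{Ric}(\Omega_0)-\ai\ddbar\psi_0$, which coincides with the smooth Ricci form $\mathrm{Ric}(\omega_\varphi)$ on $X\setminus D$, the Poincar\'e--Lelong formula $\ai\ddbar\log|z_1|^2=2\pi[D]$ then gives
\[
\mathrm{Ric}(\omega_\varphi)=R_{\mathrm{reg}}+2\pi(1-\beta)\,[D]
\]
as an identity of currents on $X$.

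Pairing this with $f\,\omega_\varphi^{n-1}/(n-1)!$, I would treat the two summands separately. For the boundary term, the current $[D]$ is integration along the $\infty$-section, and although $\omega_\varphi$ is singular transversally to $D$ (in the fibre direction carrying the $\ai\,dz_1\wedge d\bar z_1$ factor), the contraction of $[D]$ with this singular direction vanishes, so only the restriction $\omega_\varphi^{n-1}|_D$ survives. By the momentum construction this restriction is exactly $p^*\omega_M(b)^{n-1}$, the base metric at the boundary value $\tau=b$ (cf.~(\ref{defofomegamtau})), which produces precisely the term $2\pi(1-\beta)\int_{\Omega\cap D}f\,p^*\omega_M(b)^{n-1}/(n-1)!$. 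For the regular term, on $X\setminus D$ the pointwise K\"ahler identity $R_{\mathrm{reg}}\wedge\omega_\varphi^{n-1}/(n-1)!=\mathrm{Ric}(\omega_\varphi)\wedge\omega_\varphi^{n-1}/(n-1)!=S(\omega_\varphi)\,\omega_\varphi^n/n!$ holds, so it remains to show that this contributes no additional mass on $D$, i.e.
\[
\int_\Omega f\, R_{\mathrm{reg}}\wedge\frac{\omega_\varphi^{n-1}}{(n-1)!}=\int_{\Omega\setminus D}f\,S(\omega_\varphi)\,\frac{\omega_\varphi^n}{n!}.
\]
I would prove this by excising the tube $\{|z_1|\le\epsilon\}$, applying the smooth identity on $\{|z_1|\ge\epsilon\}\cap\Omega$, and letting $\epsilon\to 0$; the estimates on $\psi_0$, $\omega_\varphi$ and $\mathrm{Ric}(\Omega_0)$ from \S\ref{mccone} simultaneously give integrability of $S(\omega_\varphi)\,\omega_\varphi^n$ (so that all integrals are finite, in the spirit of Remark \ref{remvolcs}) and the vanishing of the residual contribution on the tube.

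The main obstacle is precisely this last limiting step: controlling the boundary integrals produced by integration by parts over $\partial\{|z_1|\le\epsilon\}$ and showing that they vanish as $\epsilon\to 0$, while verifying that the only surviving singular contribution is the Poincar\'e--Lelong residue already extracted. This is where the real analyticity of the momentum profile is essential, since it furnishes the sharp asymptotic expansion of $\omega_\varphi$ and of $\log(\omega_\varphi^n/\Omega_0)$ near $D$ --- the decomposition into a smooth form plus an $O(|z_1|^{2\beta})$ remainder with matching derivative bounds --- that makes the error terms over the shrinking tube $o(1)$ for $0<\beta<1$. In effect the whole theorem reduces to establishing these near-$D$ estimates, which I expect to obtain from the explicit ODE description of $\varphi$ together with the boundary condition encoding the cone angle (namely $\varphi'(b)$ being determined by $\beta$), exactly as developed in \S\ref{mccone}; the structure of the argument is otherwise parallel to the proof of Theorem \ref{scaldist}.
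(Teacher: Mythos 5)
Your overall strategy --- split $\mathrm{Ric}(\omega_{\varphi})$ into a Poincar\'e--Lelong contribution $(1-\beta)\ai\ddbar\log|z_1|^2$ plus a remainder, handle the remainder by excising the tube $\{|z_1|\le\epsilon\}$, and feed in the asymptotics of \S\ref{mccone} --- is exactly the paper's. The gap is in the sentence where you dispose of the divisor term: you assert that $2\pi(1-\beta)[D]$ paired with $f\,\omega_{\varphi}^{n-1}$ ``produces precisely'' $2\pi(1-\beta)\int_{\Omega\cap D}f\,p^*\omega_M(b)^{n-1}/(n-1)!$ because the contraction of $[D]$ with the singular fibre direction vanishes. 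This is the crux of the theorem, not a formality: the product of the current $[D]$ with a form whose coefficients blow up like $|z_1|^{2\beta-2}$ and $|z_1|^{2\beta-1}$ exactly on the support of $[D]$ is not a priori defined, and the paper explicitly warns (Remark \ref{gencsmacnd}, and the remark following Lemma \ref{termtwo}) that the usual Poincar\'e--Lelong formula cannot be applied naively here; the well-definedness of $[D]\wedge\omega_{\varphi}^{n-1}$ as a current is presented as one of the new points of the paper.

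The paper's Lemma \ref{termtwomc} proves this by a regularisation your outline does not anticipate. One replaces $\omega_{\varphi}$ by the auxiliary form $\tilde{\omega}_{\varphi}$ obtained by deleting the $\ai\, dz_1\wedge d\bar{z}_1$ component (legitimate because that component is killed when wedged with $d\log|z_1|^2$ or $d^c\log|z_1|^2$); this $\tilde{\omega}_{\varphi}$ is bounded on $\partial V_{\epsilon}$ but is \emph{not closed}, so the integration by parts in (\ref{bounv}) generates an extra term $\int\log|z_1|^2\,d^cf\wedge d\tilde{\omega}_{\varphi}^{n-1}$ that must be estimated separately --- this is where the expansion (\ref{estdtau}) and $\varphi=O(|z_1|^{2\beta})$ enter, giving a bound of order $|z_1|^{2\beta-2}$ and convergence of $\int_0^{\epsilon}r^{2\beta-1}\log r\,dr$. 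Only after all three correction terms are shown to vanish does the surviving boundary integral $\lim_{\epsilon\to0}\int_{\partial V_{\epsilon}}2\,d\theta\wedge f\tilde{\omega}_{\varphi}^{n-1}$ get identified with $4\pi\int_D f\,p^*\omega_M(b)^{n-1}$. Your treatment of the remainder term is fine (and is in fact easier here than in Theorem \ref{scaldist}, since $S(\omega_{\varphi})$ is bounded by Lemma \ref{lemricmc}, cf.~Lemma \ref{termone}), but as written the argument would stall at the divisor term unless this regularisation is supplied.
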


See Remark \ref{remcompprsts} for the comparison to similar results in the literature.

Recalling (cf.~Theorem \ref{dfdiff}) that the log Futaki invariant $\mathrm{Fut}_{D , \beta} $ is defined as a sum of the classical Futaki invariant (cf.~Theorem \ref{cldfinvdiffform}) and a ``correction'' term, we need to ensure that the classical Futaki invariant with respect to the conically singular metrics, of elementary form and momentum-constructed, is well-defined. Theorem \ref{scaldist} enables us to make sense\footnote{In fact, there is also a subtlety involving the asymptotic behaviour of the holomorphy potential $\hat{H}$, cf.~\S \ref{futinvcmpcsmtf} and \S \ref{mccsmesohpndfut}.} of the following quantity
\begin{equation*}
\textup{Fut} (\Xi, \hat{\omega}) := \int_X \hat{H} \left( \mathrm{Ric} (\hat{\omega}) - \frac{\bar{S} (\hat{\omega})}{n} \right) \wedge \frac{\hat{\omega}^{n-1}}{(n-1)!},
\end{equation*}
where $\hat{H}$ is the holomorphy potential of $\Xi$ with respect to $\hat{\omega}$ (cf.~(\ref{defholopotf})). Similarly, Theorem \ref{scaldistmc} gives us an analogous statement for the momentum-constructed conically singular metrics. The detailed statement of these results is given in Corollary \ref{intfutakisingth}. Given all these results, we can finally compute the log Futaki invariant, as in Theorem \ref{mtinvfutthird}; a key step in the proof is that the ``distributional'' term in $\textup{Fut} (\Xi , \hat{\omega})$ (resp.~$\textup{Fut} (\Xi , \omega_{\varphi} )$) exactly cancels the ``correction'' term in the log Futaki invariant (cf.~Corollary \ref{corfutcsmtypdist} (resp.~Corollary \ref{invlfimcmpbsfoxcc})). We also prove a partial invariance result for the Futaki invariant, when it is computed with respect to these classes of conically singular metrics. For the smooth metrics, that the Futaki invariant depends only on the \kah class is a well-known theorem of Futaki \cite{futaki} (cf. Theorem \ref{cldfinvdiffform}), where the proof crucially relies on the integration by parts. When we compute it with respect to conically singular metrics, we are essentially on the noncompact manifold $X \setminus D$, and hence cannot naively apply the integration by parts. Still, we can claim the following result.

\begin{theorem} \label{mtinvfutthird}
Suppose $0 < \beta < 3/4$.
\begin{enumerate} 
\item The log Futaki invariant computed with respect to a conically singular metric of elementary form $\hat{\omega}$, evaluated against a holomorphic vector field $\Xi$ which preserves $D$ and with the holomorphy potential $\hat{H}$, is given by
\begin{equation*}
\textup{Fut}_{D , \beta} (\Xi , \hat{\omega}) = \frac{1}{2 \pi} \int_{X \setminus D} \hat{H} (S(\hat{\omega}) - \underline{S} (\hat{\omega})) \frac{\hat{\omega}^n}{n!} ,
\end{equation*}
and it is invariant under the change $\hat{\omega} \mapsto \hat{\omega} + \ai \ddbar \psi$ for any smooth function $\psi \in C^{\infty} (X , \rl)$ with $\hat{\omega} + \ai \ddbar \psi > 0$ on $X \setminus D$, i.e.~$\textup{Fut}_{D, \beta} (\Xi , \hat{\omega} + \ai \ddbar \psi) = \textup{Fut}_{D , \beta} (\Xi , \hat{\omega})$. 
In particular, if $\hat{\omega}$ is cscK, $\textup{Fut}_{D , \beta} (\Xi , \hat{\omega} + \ai \ddbar \psi) =0$ for any $\psi \in C^{\infty} (X ,\rl)$ with $\hat{\omega} + \ai \ddbar \psi > 0$ on $X \setminus D$. 

\item Suppose that the $\sigma$-constancy hypothesis (cf.~Definition \ref{defsigmaconst}) is satisfied for our data, and let $D$ be the $\infty$-section of $\mathbb{X} = \prj (\mathcal{F} \oplus \cx)$. Then the log Futaki invariant computed with respect to a momentum-constructed conically singular metric $\omega_{\varphi}$, evaluated against the generator $\Xi$ of fibrewise $\cx^*$-action, is given by
\begin{equation*}
\textup{Fut}_{D , \beta} (\Xi , \omega_{\varphi}) =\int_{\mathbb{X} \setminus D} \tau (S (\omega_{\varphi}) - \underline{S}(\omega_{\varphi})) \frac{\omega_{\varphi}^n}{n!} ,
\end{equation*}
and it is invariant under the change $\omega_{\varphi} \mapsto \omega_{\varphi} + \ai \ddbar \psi$ for any smooth function $\psi \in C^{\infty} (\mathbb{X} , \rl)$ with $\omega_{\varphi} + \ai \ddbar \psi > 0$ on $\mathbb{X} \setminus D$.
\end{enumerate}
\end{theorem}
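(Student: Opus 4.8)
The plan is to read off the closed formula directly from Corollary~\ref{intfutakisingth}, and then to prove the invariance separately by a regularised integration by parts on $X \setminus D$; the hypothesis $0 < \beta < 3/4$ will be needed \emph{only} for the latter, since the formula itself rests on Corollary~\ref{intfutakisingth}, which is valid for all $0 < \beta < 1$.

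First I would treat the formula in Part~(1). By definition the log Futaki invariant $\textup{Fut}_{D,\beta}(\Xi,\hat{\omega})$ is the normalised classical Futaki invariant $\tfrac{1}{2\pi}\textup{Fut}(\Xi,\hat{\omega})$ minus the log correction term $(1-\beta)\bigl(\int_D \hat{H}\, \hat{\omega}^{n-1}/(n-1)! - \tfrac{\textup{Vol}(D,\hat{\omega})}{\textup{Vol}(X,\hat{\omega})}\int_X \hat{H}\, \hat{\omega}^n/n!\bigr)$. Since $\hat{\omega} = \omega + \lambda\ai\ddbar|s|^{2\beta}_h$ differs from $\omega$ by a term of order $O(|z_1|^{2\beta})$ that pulls back to zero on $D$, one has $\hat{\omega}|_D = \omega|_D$; and because $\Xi$ is tangent to $D$ the difference of potentials $\hat{H} - H = \lambda\,\Xi(|s|^{2\beta}_h) + \mathrm{const}$ is $O(|z_1|^{2\beta})$ near $D$, so $\hat{H}|_D = H|_D$. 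Hence the $D$-integrals are computed with the smooth data $(\omega,H)$, and substituting the expression for $\textup{Fut}(\Xi,\hat{\omega})$ from Corollary~\ref{intfutakisingth}(1) shows that $\tfrac{1}{2\pi}$ times its distributional term is \emph{exactly} the log correction term; the two cancel and leave $\tfrac{1}{2\pi}\int_{X\setminus D}\hat{H}(S(\hat{\omega})-\underline{S}(\hat{\omega}))\hat{\omega}^n/n!$. Part~(2) is formally identical, using Corollary~\ref{intfutakisingth}(2) and Theorem~\ref{scaldistmc} in place of Theorem~\ref{scaldist}, together with the momentum-construction identities $\tau|_D = b$ and $\omega_{\varphi}|_D = \omega_M(b)$ (so that $\int_D \tau\,\omega_{\varphi}^{n-1}/(n-1)! = b\,\textup{Vol}(M,\omega_M(b))$ and $\textup{Vol}(D,\omega_{\varphi}) = \textup{Vol}(M,\omega_M(b))$); the $\sigma$-constancy hypothesis is what guarantees these volume and curvature quantities are the intended constants.

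For the invariance I would fix $\psi$, set $\omega_t = \hat{\omega} + t\ai\ddbar\psi$ with evolving potential $\hat{H}_t = \hat{H} + t\,\Xi(\psi)$, and differentiate the equivalent functional $\int_{X\setminus D}\hat{H}_t(S(\omega_t)-\underline{S}(\omega_t))\omega_t^n/n!$ in $t$. Following the classical proof of the metric-independence of the Futaki invariant, I would insert the standard linearisation of the scalar curvature, whose leading part is $-\mathcal{D}_{\omega_t}^*\mathcal{D}_{\omega_t}\psi$, use $\tfrac{d}{dt}\hat{H}_t = \Xi(\psi)$, and invoke $\mathcal{D}_{\omega_t}\hat{H}_t = 0$ (holomorphicity of $\Xi$) to rewrite the whole integrand as a divergence $\mathrm{div}_{\omega_t}(W_t)$ on $X\setminus D$. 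On a compact manifold this would integrate to zero; here Stokes' theorem gives instead $-\lim_{\epsilon\to0}\int_{T_\epsilon}\iota_{W_t}(\omega_t^n/n!)$ over the tubes $T_\epsilon = \{|z_1|=\epsilon\}$, so the content of the theorem is that this boundary flux vanishes.

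The hard part will be precisely this boundary estimate, and it is where $\beta<3/4$ enters. I would use that each of the two metric classes is a smooth Kähler form plus a term of order $O(|z_1|^{2\beta})$ whose derivatives obey the explicit bounds of \S\ref{mccone} and \S\ref{eleestcstf}. The field $W_t$ is assembled from $\hat{H}_t$, $\psi$ and their covariant derivatives up to third order contracted with $\omega_t$; although $\psi$ is smooth, the cone metric contributes factors $g^{1\bar1}\sim|z_1|^{2-2\beta}$ and Christoffel symbols of size $|z_1|^{-1}$, while the measure induced on $T_\epsilon$ scales like $|z_1|^{2\beta-1}$. Accounting for these powers carefully, the dominant term of the flux integral scales like a power of $\epsilon$ that is positive exactly when $\beta<3/4$, so the limit vanishes in that range (and need not vanish for $\beta\ge 3/4$). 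Part~(2)'s invariance then follows verbatim: a general $\ai\ddbar\psi$ destroys the fibrewise symmetry of $\omega_{\varphi}$, but $\omega_{\varphi}+\ai\ddbar\psi$ is still of the form ``smooth $+\,O(|z_1|^{2\beta})$'' along $D$, so the same flux estimate applies with $\tau$ in place of $\hat{H}$, using the curvature bounds recorded for momentum-constructed metrics in \S\ref{mccone}. The cscK corollary in Part~(1) is immediate, since $S(\hat{\omega})=\underline{S}(\hat{\omega})$ kills the remaining integral.
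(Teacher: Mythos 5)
Your proposal follows essentially the same route as the paper: the closed formula is obtained by cancelling the distributional term of Corollary \ref{intfutakisingth} against the log correction term (valid for all $0<\beta<1$, as in Corollary \ref{corfutcsmtypdist} and Remark \ref{exttermlogfutsing}, using Lemmas \ref{volinvcstf}, \ref{holinvcstf} and \ref{termtwo} to identify the $D$- and $X$-integrals with their smooth counterparts), and the invariance is proved by the classical variational computation with integration by parts on $X\setminus D$, where the restriction $\beta<3/4$ comes from the boundary flux over the tubes $\{|z_1|=\epsilon\}$ of the terms involving $S(\hat{\omega})=O(|z_1|^{2-4\beta})$, which are of order $\epsilon^{3-4\beta}$. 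This is precisely the structure of \S\ref{invvhamcstf}--\S\ref{iotlficmccsmss} of the paper.
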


\begin{remark}
The author conjectures that the result should be true for $0 < \beta <1$ in general.
\end{remark}


\subsection{Organisation of the paper}
We first review the basics on log $K$-stability and log Futaki invariant in \S \ref{sclgkstab}.

\S \ref{mccmcsad} discusses in detail the momentum-constructed conically singular metrics and log Futaki invariant, in particular our main result Theorem \ref{mainthmmc};  \S \ref{bgmc} is a general introduction, and \S \ref{mccone} discusses some basic properties of momentum-constructed metrics that have cone singularities. \S \ref{mainlfmcpf} is devoted to the proof of Theorem \ref{mainthmmc}.

\S \ref{scalmeas} and \S \ref{sipftcsmotf} discuss in detail the log Futaki invariant computed with respect to conically singular metrics, as presented in \S \ref{intscmsfi}. After collecting some basic estimates on conically singular metrics of elementary form in \S \ref{eleestcstf}, we prove in \S \ref{scscaldist} that the current $\mathrm{Ric} (\hat{\omega}) \wedge \hat{\omega}^{n-1}$ (and $\mathrm{Ric} (\omega_{\varphi}) \wedge \omega_{\varphi}^{n-1}$) is well-defined on the whole of $X$, as stated in Theorems \ref{scaldist} and \ref{scaldistmc}. Corollary \ref{intfutakisingth} is proved in \S \ref{lfinvcwercsm}.

\S \ref{sipftcsmotf} is concerned with the proof of Theorem \ref{mtinvfutthird}; the main result of \S \ref{invvhamcstf} is Corollary \ref{corfutcsmtypdist} (see also Remark \ref{exttermlogfutsing}), which reduces the claim (for the conically singular metrics of elementary form) to the computations that we do in \S \ref{invfutwrtcsmtf} along the line of proving the invariance of the classical Futaki invariant (i.e.~the smooth case). \S \ref{iotlficmccsmss} establishes the claim for the momentum-constructed conically singular metrics.

\section{Log Futaki invariant and log $K$-stability}  \label{sclgkstab}

\subsection{Test configurations and $K$-stability}  \label{usualksdtystatotcon} 

We first recall the ``usual'' $K$-stability. This was first introduced by Tian \cite{tian97} and made a purely algebro-geometric notion by Donaldson \cite{dontoric}.

\begin{definition} \label{intdeftestconfigusu}
A \textbf{test configuration} for a polarised \kah manifold $(X,L)$ with exponent $r \in \mathbb{N}$ is a projective scheme $\mathcal{X}$ together with a relatively ample line bundle $\mathcal{L}$ over $\mathcal{X}$ and a flat morphism $\pi : \mathcal{X} \to \cx$ with a $\cx^*$-action on $\mathcal{X}$, which covers the usual multiplication in $\cx$ and lifts to $\mathcal{L}$ in an equivariant manner, such that the fibre $\pi^{-1} (1)$ is isomorphic to $(X,L^{\otimes r})$.
\end{definition}

\begin{remark}
We recall the following important and well known observations.
\begin{enumerate}
\item By virtue of the (equivariant) $\cx^*$-action on $\mathcal{X}$, all non-central fibres $\mathcal{X}_t := \pi^{-1} (t)$ ($t \in \cx^*$) are isomorphic and the central fibre $\mathcal{X}_ 0:= \pi^{-1} (0)$ is naturally acted on by $\cx^*$.

\item Although $X$ is a smooth manifold, the central fibre $\mathcal{X}_0$ of a test configuration is usually not smooth. In fact, $\mathcal{X}_0$ is a priori just a scheme and not even a variety.

\item A test configuration $(\mathcal{X}, \mathcal{L})$ is called \textbf{product} if $\mathcal{X}$ is isomorphic $X \times \cx$. Note that this isomorphism is not necessarily equivariant, so $X$ may have a nontrivial $\cx^*$-action. $(\mathcal{X}, \mathcal{L})$ is called \textbf{trivial} if $\mathcal{X}$ is equivariantly isomorphic to $X \times \cx$, i.e. with trivial $\cx^*$-action on $X$.
\end{enumerate}
\end{remark}

\begin{remark} \label{pathsingkstble}
A well-known pathology found by Li and Xu \cite{lx} means that we may have to assume that $\mathcal{X}$ is a normal variety when $(\mathcal{X} , \mathcal{L})$ is not product or trivial. Alternatively, we may have to assume that the $L^2$-norm of the test configuration (as introduced by Donaldson \cite{donlb}) is non-zero to define the non-triviality of the test configuration, as proposed by Sz\'ekelyhidi \cite{sze, szefilt}. See also \cite{bhj, dertwisted, stoppa}.
\end{remark}

Let $(\mathcal{X}_t , \mathcal{L}_t)$ be any fibre of a test configuration $(\mathcal{X}, \mathcal{L})$ with the polarisation given by $\mathcal{L}_t := \mathcal{L} |_{\mathcal{X}_t}$. By the Riemann--Roch theorem and flatness,
\begin{equation*}
d_k := \dim H^0 (\mathcal{X}_t , \mathcal{L}_t^{\otimes k}) = a_0 k^n + a_1 k^{n-1} + O(k^{n-2})
\end{equation*}
with $a_0 , a_1 \in \rtn$. On the other hand, the $\cx^*$-action on the central fibre $(\mathcal{X}_0, \mathcal{L}_0)$ induces a representation $\cx^* \actson H^0 (\mathcal{X}_0, \mathcal{L}_0^{\otimes k})$. Let $w_k$ be the weight of the representation $\cx^* \actson \bigwedge^{\text{max}} H^0 (\mathcal{X}_0, \mathcal{L}_0^{\otimes k})$. Equivariant Riemann--Roch theorem (cf. \cite{dontoric}) shows that
\begin{equation*}
w_k = b_0 k^{n+1} + b_1 k^n + O(k^{n-1}) .
\end{equation*}
Now expand
\begin{equation*}
\frac{w_k}{k d_k} = \frac{b_0}{a_0} + \frac{a_0 b_1 - a_1 b_0}{a_0^2} k^{-1} + O(k^{-2}).
\end{equation*}

\begin{definition} \label{defdfinvag}
\textbf{Donaldson--Futaki invariant} $DF(\mathcal{X}, \mathcal{L})$ of a test configuration $(\mathcal{X}, \mathcal{L})$ is a rational number defined by $DF(\mathcal{X}, \mathcal{L}) = (a_0 b_1 - a_1 b_0) / {a_0}$.
\end{definition}

\begin{definition}
A polarised projective scheme $(X,L)$ is \textbf{$K$-semistable} if $DF (\mathcal{X}, \mathcal{L}) \ge 0$ for any test configuration $(\mathcal{X} , \mathcal{L})$ for $(X,L)$. $(X,L)$ is \textbf{$K$-polystable} if $DF (\mathcal{X}, \mathcal{L}) \ge 0$ with equality if and only if $(\mathcal{X} , \mathcal{L})$ is product, and is \textbf{$K$-stable} if $DF (\mathcal{X}, \mathcal{L}) \ge 0$ with equality if and only if $(\mathcal{X} , \mathcal{L})$ is trivial.
\end{definition}

We see that the sign of $DF (\mathcal{X}, \mathcal{L})$ is unchanged when we replace $\mathcal{L}$ by $\mathcal{L}^{\otimes r}$. Therefore, once $\mathcal{X}$ is fixed, we may assume that the exponent of the test configuration is always 1 with $L$ being very ample.

The following conjecture, usually referred to as \textbf{Yau--Tian--Donaldson conjecture}, is well-known; see Remark \ref{logdtyfanosolv} for the special case when $X$ is a Fano manifold.

\begin{conjecture} \label{usualdtyconj}
(Yau \cite{yauprob}, Tian \cite{tian97}, Donaldson \cite{dontoric})
$(X,L)$ admits a cscK metric in $c_1 (L)$ if and only if it is $K$-polystable.
\end{conjecture}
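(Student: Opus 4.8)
The statement is the Donaldson--Tian--Yau conjecture, a central open problem in \kah geometry: one implication is known in broad generality, but the biconditional has been established completely only in the Fano case $L=-K_X$. Accordingly the plan splits along the equivalence. The forward (necessity) direction---that existence of a cscK metric in $c_1(L)$ forces $K$-polystability---is the accessible half, and I would prove it by the variational/energy method. The reverse (sufficiency) direction---that $K$-polystability produces a cscK metric---is the genuinely deep part and is where the conjecture remains open for a general polarisation.

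For necessity, the plan is to pass from the algebraic Donaldson--Futaki invariant to the asymptotic slope of the Mabuchi $K$-energy $\mathcal{M}$ on the space of \kah potentials in $c_1(L)$. Given a test configuration $(\mathcal{X},\mathcal{L})$ I would attach to it a (weak) geodesic ray $\varphi_t$ of potentials, built from the Bergman-type embeddings induced by $H^0(\mathcal{X}_0,\mathcal{L}_0^{\otimes k})$, and establish a slope formula $\lim_{t\to\infty}\frac{d}{dt}\mathcal{M}(\varphi_t)=c\cdot DF(\mathcal{X},\mathcal{L})$ with $c>0$. Since $\mathcal{M}$ is convex along geodesics and a cscK metric is a critical point, hence a minimiser, the slope at infinity is non-negative, giving $DF\ge 0$; the product/trivial dichotomy in the equality case would follow from uniqueness of cscK metrics modulo automorphisms. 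For the special case of product test configurations generated by a holomorphic vector field $\Xi_f$, this reduces exactly to $\mathrm{Fut}(\Xi_f,[\omega])=0$ via Theorem \ref{cldfinvdiffform}, which is immediate from the cscK condition $S(\omega)=\bar{S}$.

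The sufficiency direction is the main obstacle, and I would attack it by a continuity method: open a path of equations (Aubin's path, the twisted cscK equations, or---in the Fano case---the cone-singularity path of \cite{cds1,cds2,cds3}) and prove that the set of solvable parameters is both open and closed. Openness is a linear-theory/implicit-function argument governed by the (non-)existence of holomorphy potentials. The decisive difficulty is closedness, which demands a priori estimates: a properness estimate for $\mathcal{M}$ in the sense of Chen--Cheng, together with a partial $C^0$-estimate allowing one to extract a limiting polarised variety and to match $K$-polystability of that limit with smoothness of the limiting metric. In the Fano case this programme was completed by Chen--Donaldson--Sun and Tian precisely through the analysis of \ke metrics with cone singularities along a divisor---the circle of ideas this paper develops---but for general $L$ the requisite estimates, and even the correct description of the limiting algebraic object, are unavailable. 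I therefore expect this step, rather than any single computation, to be the fundamental obstruction.
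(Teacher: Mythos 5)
This statement is labelled as a \emph{conjecture} in the paper (the Donaldson--Tian--Yau conjecture), and the paper offers no proof of it; it is quoted as background and is known to be open for general polarisations $L$, being fully resolved only in the Fano case $L=-K_X$. So there is no proof in the paper against which to compare your attempt, and your proposal is, correctly, not a proof but a survey of the expected strategy. Your account of the necessity direction (slope formula for the Mabuchi energy along rays attached to test configurations, convexity, and reduction to the vanishing of the classical Futaki invariant for product configurations) is consistent with the known partial results in the literature, and you are right that the sufficiency direction is the genuinely open part, where the requisite a priori estimates and the identification of limiting algebraic objects are unavailable outside the Fano setting. The one point to flag is that your equality-case argument for polystability (``uniqueness of cscK metrics modulo automorphisms'' forcing the degeneration to be product) is itself delicate and not a consequence of convexity alone; even in the settings where necessity is established, handling the equality case requires care with the norm or non-triviality notion for test configurations (cf.\ Remark \ref{pathsingkstble} in the paper). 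Since the statement is an open conjecture, no completion of your sketch into a proof should be expected.
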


We now discuss product test configurations and the automorphism group of $(X,L)$ in detail. In this case, the Donaldson--Futaki invariant admits a differential-geometric formula as given in Theorem \ref{cldfinvdiffform}, which is called the (classical) Futaki invariant. We first briefly review the automorphism group of $(X,L)$; the reader is referred to \cite{kobayashi, lebsim} for more details on what is discussed here.

Let $\mathrm{Aut} (X)$ be the group of holomorphic transformations of $X$ which consists of diffeomorphisms of $X$ which preserve the complex structure $J$, and we write $\text{Aut}_0 (X)$ for the connected component of $\mathrm{Aut} (X)$ containing the identity. 
\begin{definition}
A vector field $v$ on $X$ is called \textbf{real holomorphic} if it preserves the complex structure, i.e.~the Lie derivative $L_v J$ of $J$ along $v$ is zero. A vector field $\Xi$ is called \textbf{holomorphic} if it is a global section of the holomorphic tangent sheaf $T_X$, i.e.~$\Xi \in H^0 (X,T_X)$. 
\end{definition}


\begin{remark} \label{rem11corbauth0}
It is well-known (cf.~\cite[Proposition 2.11, Chapter IX]{kn}) that there exists a one-to-one correspondence between the elements in $\mathfrak{aut} (X)$ and $H^0 (X ,T_X)$; the map $f^{1,0} : \mathfrak{aut} (X) \ni v \mapsto v^{1,0} \in H^0 (X ,T_X)$ defined by taking the $(1,0)$-part and the map $f^{\mathrm{Re}} : H^0 (X ,T_X) \ni \Xi \mapsto \mathrm{Re} (\Xi) \in \mathfrak{aut} (X)$ defined by taking the real part are the inverses of each other.
\end{remark}

We now write $\text{Aut} (X,L)$ for the subgroup of $\textup{Aut} (X)$ consisting of the elements whose action lifts to an automorphism of the total space of the line bundle $L$, and write $\mathrm{Aut}_0 (X,L)$ for the identity component of $\mathrm{Aut} (X,L)$. It is known that for any $v \in \mathrm{LieAut}_0 (X,L)$ and a \kah metric $\omega$ on $X$ there exists $f \in C^{\infty} (X, \cx)$ such that

\begin{equation}
	\iota (v^{1,0}) \omega = - \bar{\partial} f , \label{defholopotf}
\end{equation}
where $\iota$ denotes the interior product. Such $f$ is called the \textbf{holomorphy potential} of $v^{1,0}$ with respect to $\omega$. Conversely, if $\Xi \in H^0 (X,T_X)$ admits a holomorphy potential, then $\mathrm{Re} (\Xi) \in \mathrm{LieAut}_0 (X,L)$ (cf. \cite[Theorem 1]{lebsim} and \cite[Theorems 9.4 and 9.7]{kobayashi}).


\begin{remark} \label{1psinaut0eqprtc}
It is immediate that a (nontrivial) product test configuration for $(X,L)$ is exactly a choice of 1-parameter subgroup $\cx^*$ in $\mathrm{Aut}_0 (X,L)$, where we recall that the $\cx^*$-action has to lift to the total space of the line bundle $L$ to define a test configuration (cf.~Definition \ref{intdeftestconfigusu}). If we write $v \in \mathrm{LieAut}_0 (X,L)$ for the generator of this subgroup $\cx^* \le \mathrm{Aut}_0 (X,L)$, the above argument shows that $v^{1,0} \in H^0 (X,T_X)$ admits a holomorphy potential, and that conversely $\Xi \in H^0 (X,T_X)$ admitting a holomorphy potential defines a 1-parameter subgroup $\cx^* \le \mathrm{Aut}_0 (X,L)$ under the correspondence in Remark \ref{rem11corbauth0}. To summarise, \textit{a product test configuration is exactly a choice of $\Xi \in H^0 (X,T_X)$ which admits a holomorphy potential.}
\end{remark}

Finally, we recall the following well-known theorem.

\begin{theorem} \label{cldfinvdiffform}
\emph{(Donaldson \cite{dontoric}, Futaki \cite{futaki})}
Let $f \in C^{\infty} (X , \cx)$ be the holomorphy potential of a holomorphic vector field $\Xi_f$ on $X$ with respect to a \kah metric $\omega \in c_1 (L)$. If $(\mathcal{X} , \mathcal{L})$ is the product test configuration generated by $\Xi_f$, the Donaldson--Futaki invariant can be written as
\begin{equation*}
DF (\mathcal{X}, \mathcal{L}) = \frac{1}{4 \pi} \int_X f (S(\omega) - \bar{S}) \frac{\omega^n}{n!} ,
\end{equation*}
where $S(\omega)$ is the scalar curvature of $\omega$ and $\bar{S}$ is the average of $S(\omega)$ over $X$. The integral in the right hand side
\begin{equation*}
\mathrm{Fut} (\Xi_f , [\omega]):= \int_X f (S(\omega) - \bar{S}) \frac{\omega^n}{n!},
\end{equation*}
called the \textbf{Futaki invariant} or \textbf{classical Futaki invariant}, does not depend on the specific choice of \kah metric $\omega$, i.e. is an invariant of the cohomology class $[\omega]$.
\end{theorem}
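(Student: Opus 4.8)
The plan is to prove the two assertions separately: the identity $DF(\mathcal{X},\mathcal{L}) = \frac{1}{4\pi}\mathrm{Fut}(\Xi_f,[\omega])$ relating the algebraic and differential-geometric invariants, and the independence of $\mathrm{Fut}(\Xi_f,[\omega])$ of the representative $\omega \in [\omega]$. The first is an application of equivariant Riemann--Roch together with the Bergman kernel expansion, and the second a variational argument resting on the self-adjointness of the Lichnerowicz operator $\lich{\omega}$ and on the characterisation of holomorphy potentials as its kernel.

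For the identity, I would use that the central fibre of the product test configuration generated by $\Xi_f$ is $(X,L)$ itself, equipped with the $\cx^*$-action integrating $\Xi_f$, so that the weight $w_k$ is the trace of the infinitesimal generator acting on $H^0(X,L^{\otimes k})$. Quantising the holomorphy potential $f$ and invoking the asymptotic expansion of the equivariant Bergman kernel, whose density expands as $\rho_k = k^n + \frac{1}{2}S(\omega)k^{n-1} + O(k^{n-2})$ (Tian--Zelditch--Catlin--Lu), one finds that $b_0/a_0$ equals the average $\frac{\int_X f\,\omega^n}{\int_X \omega^n}$ of the Hamiltonian, while the subleading weight $b_1$ acquires, through the $\frac{1}{2}S(\omega)$ term of the expansion, a contribution proportional to $\int_X f\,S(\omega)\,\omega^n$. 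Hirzebruch--Riemann--Roch gives the ratio $a_1/a_0 = \bar{S}/2$, so that
\[ DF(\mathcal{X},\mathcal{L}) = \frac{a_0 b_1 - a_1 b_0}{a_0} = b_1 - \frac{\bar{S}}{2}\,b_0 \]
reduces to a constant multiple of $\int_X f\,(S(\omega)-\bar{S})\,\frac{\omega^n}{n!}$. Tracking the normalisation constants through this computation---in particular the factor $\frac{1}{2}$ from the Bergman expansion and the $2\pi$ relating $[\omega]$ to $c_1(L)$---pins the constant down to $\frac{1}{4\pi}$, giving the stated formula. I would also note that changing the additive constant of $f$ (equivalently, twisting the lift of the action to $L$ by a character) shifts $b_0,b_1$ but leaves $a_0 b_1 - a_1 b_0$ invariant, consistently with $\int_X(S(\omega)-\bar{S})\frac{\omega^n}{n!}=0$.

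For the invariance, I would fix $\Xi_f$ and differentiate $\mathrm{Fut}(\Xi_f,[\omega_t])$ along the path $\omega_t = \omega + t\,\ai\ddbar\psi$, $\psi \in C^{\infty}(X,\rl)$. Three variational inputs are needed: the variation of the volume form $\frac{d}{dt}\frac{\omega_t^n}{n!} = (\Delta_{\omega_t}\psi)\frac{\omega_t^n}{n!}$; the linearisation of scalar curvature, which by the standard formula (LeBrun--Simanca) is $\frac{d}{dt}S(\omega_t) = -\lich{\omega_t}\psi + \frac{1}{2}\langle dS(\omega_t),d\psi\rangle_{\omega_t}$; and the variation of the holomorphy potential, obtained by differentiating $\iota(\Xi_f)\omega_t = -\bar{\partial}f_t$ and using the identity $\iota(\Xi_f)\ai\ddbar\psi = \ai\,\bar{\partial}(\Xi_f\psi)$ (valid because $\Xi_f$ is holomorphic), which gives $\dot f_t = -\ai\,\Xi_f(\psi)$ up to an additive constant. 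Since $\bar{S}$ is determined by $[\omega]$ alone we have $\frac{d}{dt}\bar{S}=0$, and the additive constant in $\dot f_t$ contributes nothing because $\int_X(S(\omega_t)-\bar{S})\frac{\omega_t^n}{n!}=0$. Collecting terms, $\frac{d}{dt}\mathrm{Fut}$ splits into the fourth-order contribution involving $f\,\lich{\omega_t}\psi$ and a collection of first-order terms. The fourth-order term vanishes: by self-adjointness of $\lich{\omega_t}$,
\[ \int_X f\,\lich{\omega_t}\psi\,\frac{\omega_t^n}{n!} = \int_X \psi\,\lich{\omega_t}f\,\frac{\omega_t^n}{n!} = 0, \]
because $\lich{\omega_t}f = \mathcal{D}^*_{\omega_t}\bar{\partial}(\mathrm{grad}^{1,0}_{\omega_t}f) = 0$, the field $\mathrm{grad}^{1,0}_{\omega_t}f = \Xi_f$ being holomorphic. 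A further integration by parts, using $\bar{\partial}f = -\iota(\Xi_f)\omega_t$, shows that the remaining first-order terms assemble into a total divergence and integrate to zero, so $\frac{d}{dt}\mathrm{Fut}(\Xi_f,[\omega_t])=0$.

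The main obstacle in the first part is the equivariant Riemann--Roch computation of the subleading coefficient $b_1$, i.e.\ correctly extracting the $\frac{1}{2}S(\omega)$ term of the Bergman kernel expansion and carrying the $2\pi$-factors that separate $[\omega]$ from $c_1(L)$; this is precisely where the constant $\frac{1}{4\pi}$ is determined. In the second part the conceptual heart---the reduction of the whole variation to $\lich{\omega}f=0$---is immediate once the variational formulae are in hand, so the only delicate point is verifying that the surviving first-order terms form a divergence; this is routine in the smooth setting but requires care with signs and with the Hermitian inner-product conventions.
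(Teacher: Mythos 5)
Your proposal is correct and follows essentially the same route as the sources this (quoted, unproved) theorem rests on: the invariance argument you give is precisely the one the paper itself recalls in \S\ref{invfutwrtcsmtf} following Sz\'ekelyhidi's \S 4.2 --- variation of the volume form, the linearised scalar curvature via the Lichnerowicz operator, the variation $\dot f_t = -\ai\,\Xi_f(\psi)$ of the holomorphy potential, and the vanishing of $\int_X f\,\mathcal{D}_{\omega}^*\mathcal{D}_{\omega}\psi\,\omega^n$ by self-adjointness together with $\mathcal{D}_{\omega}f=0$. Your derivation of $DF(\mathcal{X},\mathcal{L})=\frac{1}{4\pi}\mathrm{Fut}(\Xi_f,[\omega])$ from the expansions of $d_k$ and $w_k$ is likewise the standard computation of \cite{dontoric}; whether one extracts the subleading coefficients from the equivariant Riemann--Roch theorem directly or from the Bergman density expansion is immaterial, and your observation that $a_0b_1-a_1b_0$ is insensitive to shifting $f$ by a constant is the right consistency check on the normalisation.
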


\subsection{Log $K$-stability}


Donaldson \cite{donconic} introduced the notion of log $K$-stability, in the attempt to solve Conjecture \ref{usualdtyconj} for the Fano manifolds; see also Remark \ref{logdtyfanosolv}. This is a variant of $K$-stability that is expected to be more suited to conically singular cscK metrics. We refer to \cite{donconic, os} for a general introduction.

This purely algebro-geometric notion can be defined for an $n$-dimensional polarised normal variety $(X,L)$ together with an effective integral reduced divisor $D \subset X$, but we will throughout assume that $(X,L)$ is a polarised \kah manifold and $D \subset X$ is a smooth effective divisor as this is the case we will be exclusively interested in. We write $((X,D);L)$ for these data. 


Suppose now that we have a test configuration $(\mathcal{X} , \mathcal{L})$ for $(X,L)$. As in \S \ref{usualksdtystatotcon}, the equivariant $\cx^*$-action on $\mathcal{X}$ induces an action on the central fibre $\mathcal{X}_0$, and hence an action on $H^0 (\mathcal{X}_0 , \mathcal{L}^{\otimes k} |_{\mathcal{X}_0})$ for any $k \in \mathbb{N}$. We write $d_k$ for $\dim H^0 (\mathcal{X}_0 , \mathcal{L}^{\otimes k} |_{\mathcal{X}_0})$ and $w_k$ for the weight of the $\cx^*$-action on $\bigwedge^{\text{max}} H^0 (\mathcal{X}_0 , \mathcal{L}^{\otimes k} |_{\mathcal{X}_0})$. As we saw in \S \ref{usualksdtystatotcon}, these admit an expansion in $k \gg 1$ as 
\begin{align*}
d_k &= a_0 k^n + a_1 k^{n-1} + \cdots \\
w_k &= b_0 k^{n+1} + b_1 k^{n} + \cdots  
\end{align*}
where $a_i$, $b_i$ are some rational numbers.

The $\cx^*$-action on $\mathcal{X} $ naturally induces a test configuration $(\mathcal{D} , \mathcal{L} |_{\mathcal{D}})$ of $(D,L |_D)$ by supplementing the orbit of $D$ (under the $\cx^*$-action) with the flat limit. Similarly to the above, writing $\mathcal{D}_0$ for the central fibre, we write $\tilde{d}_k$ for $\dim H^0 (\mathcal{D}_0 , \mathcal{L}^{\otimes k} |_{\mathcal{D}_0})$ and $\tilde{w}_k$ for the weight of the $\cx^*$-action on $\bigwedge^{\text{max}} H^0 (\mathcal{D}_0 , \mathcal{L}^{\otimes k} |_{\mathcal{D}_0})$. We have the expansion
\begin{align*}
\tilde{d}_k &= \tilde{a}_0 k^{n-1} + \tilde{a}_1 k^{n-2} + \cdots  \\
\tilde{w}_k &= \tilde{b}_0 k^n + \tilde{b}_1 k^{n-1} + \cdots 
\end{align*}
exactly as above, where $\tilde{a}_i$, $\tilde{b}_i$ are some rational numbers.

Thus a test configuration $(\mathcal{X} , \mathcal{L})$ and a choice of divisor $D \subset X$ gives us two test configurations $(\mathcal{X} , \mathcal{L})$ and $(\mathcal{D} , \mathcal{L} |_{\mathcal{D}})$. We call the pair $(\mathcal{X} , \mathcal{L})$ and $(\mathcal{D} , \mathcal{L} |_{\mathcal{D}})$ constructed as above a \textbf{log test configuration} for the pair $((X,D);L)$, and write $((\mathcal{X} , \mathcal{D}); \mathcal{L})$ to denote these data. We now define the \textbf{log Donaldson--Futaki invariant}
\begin{equation} \label{defoflogdfinv}
DF (\mathcal{X} , \mathcal{D} , \mathcal{L} , \beta) := \frac{2( a_0 b_1 - a_1 b_0 )}{a_0} - (1 - \beta) \left( \tilde{b}_0 - \frac{\tilde{a}_0}{a_0} b_0\right) ,
\end{equation}
analogously to Definition \ref{defdfinvag}.

We now consider a special case where the log test configuration $((\mathcal{X} , \mathcal{D}); \mathcal{L})$ is given by a $\cx^*$-action on $X$ which lifts to $L$ and preserves $D$. We then have isomorphisms $\mathcal{X} \cong X \times \cx$ and $\mathcal{D} \cong D \times \cx$, and in particular the central fibre $\mathcal{X}_0$ (resp. $\mathcal{D}_0$) is isomorphic to $X$ (resp. $D$). Note that the above isomorphisms are not necessarily equivariant, and hence the central fibres $\mathcal{X}_0 \cong X$ and $\mathcal{D}_0 \cong D$ could have a nontrivial $\cx^*$-action. In this case the log test configuration $((\mathcal{X} , \mathcal{D}); \mathcal{L})$ is called \textbf{product}. In the more restrictive case where the above isomorphisms are equivariant, i.e. when $\cx^*$-action acts trivially on the central fibres $\mathcal{X}_0 \cong X$ and $\mathcal{D}_0 \cong D$, the log test configurations is called \textbf{trivial}.

\begin{remark} \label{1psinaut0eqprtclog}
As in Remark \ref{1psinaut0eqprtc}, a product log test configuration is exactly a choice of $\Xi \in H^0 (X,T_X)$ that admits a holomorphy potential and preserves $D$ (i.e.~is tangential to $D$).
\end{remark}

With these preparations, the log $K$-stability can now be defined as follows.

\begin{definition}
A pair $((X,D);L)$ is called \textbf{log $K$-semistable with cone angle $2 \pi \beta$} if $DF (\mathcal{X} , \mathcal{D} , \mathcal{L} , \beta) \ge 0$ for any log test configuration $((\mathcal{X} , \mathcal{D}); \mathcal{L})$ for $((X,D);L)$. It is called \textbf{log $K$-polystable with cone angle $2 \pi \beta$} if it is log $K$-semistable with cone angle $2 \pi \beta$ and $DF (\mathcal{X} , \mathcal{D} , \mathcal{L} , \beta) = 0$ if and only if $((\mathcal{X} , \mathcal{D}); \mathcal{L})$ is product. It is called \textbf{log $K$-stable with cone angle $2 \pi \beta$} if it is log $K$-semistable with cone angle $2 \pi \beta$ and $DF (\mathcal{X} , \mathcal{D} , \mathcal{L} , \beta) = 0$ if and only if $((\mathcal{X} , \mathcal{D}); \mathcal{L})$ is trivial.
\end{definition}

\begin{remark}
We need some restriction on the singularities of $\mathcal{X}$ and $\mathcal{D}$ to define log $K$-stability (cf.~Remark  \ref{pathsingkstble}), when the log test configuration is not product or trivial (cf.~\cite{os}), but we do not discuss this issue since only the product log test configurations will be important for us later.
\end{remark}

\begin{remark}
While we shall see later (cf.~Corollary \ref{corfutcsmtypdist} and Remark \ref{exttermlogfutsing} that follows) in differential-geometric context how the ``extra'' terms $(1 - \beta) \left( \tilde{b}_0 - \frac{\tilde{a}_0}{a_0} b_0\right)$ in (\ref{defoflogdfinv}) (or the corresponding terms in (\ref{dfsd})) come out, they come out naturally in the blow-up formalism in algebraic geometry (cf.~\cite[Theorem 3.7]{os}).  
\end{remark}

The following may be called the \textbf{log Yau--Tian--Donaldson conjecture}. This seems to be a folklore conjecture in the field, and is mentioned in e.g.~\cite{dervan, keller14}.
\begin{conjecture} \label{logdtyconj}
$((X,D);L)$ is log $K$-polystable with cone angle $2 \pi \beta$ if and only if $X$ admits a cscK metric in $c_1(L)$ with cone singularities along $D$ with cone angle $2 \pi \beta$.
\end{conjecture}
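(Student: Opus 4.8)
The stated result is a \emph{conjecture}, not a theorem, and in the generality written here --- arbitrary ample $L$ on a possibly non-Fano $X$, arbitrary smooth divisor $D$, and \textbf{cscK} (not merely \ke) cone metrics --- it is wide open; the paper itself offers only Theorem \ref{mainthmmc} as supporting evidence in the momentum-constructed case. So rather than a complete proof I will sketch the strategy I would attempt, treating the two implications separately and flagging where the real difficulty lies.

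For the direction ``existence $\Rightarrow$ log $K$-polystability'' (the easier one), the plan is to pass through a \emph{log Mabuchi energy} $\mathcal{M}_{D,\beta}$ defined on the space of conically singular \kah potentials in $c_1(L)$. I would first show that a cscK cone metric is a critical point, and in fact a minimiser, of $\mathcal{M}_{D,\beta}$, so that this functional is bounded below. The key analytic input is a \emph{slope formula} identifying the log Donaldson--Futaki invariant $DF(\mathcal{X},\mathcal{D},\mathcal{L},\beta)$, up to a positive constant, with the asymptotic slope of $\mathcal{M}_{D,\beta}$ along the weak geodesic ray associated to the product log test configuration $((\mathcal{X},\mathcal{D});\mathcal{L})$. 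This is the log analogue of the relation between the Donaldson--Futaki invariant and the Futaki invariant recorded in Theorem \ref{cldfinvdiffform} and Theorem \ref{dfdiff}; its justification rests on doing the integration by parts ``correctly'' across the cone locus, in exactly the spirit of Theorems \ref{scaldist} and \ref{scaldistmc}. Boundedness below of $\mathcal{M}_{D,\beta}$ then forces the slope, hence $DF$, to be non-negative, while the polystable rigidity (equality only for product configurations) would follow from a uniqueness-plus-reductivity argument for $\mathrm{Aut}(X,L)$ preserving $D$.

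For the converse ``log $K$-polystability $\Rightarrow$ existence'' (the hard one), the natural route is a continuity method combined with a compactness-and-stability argument following Chen--Donaldson--Sun. One would deform either the cone angle $\beta$ or an auxiliary twisting parameter along a path, maintaining a priori estimates on the conically singular metrics: openness via the implicit function theorem in weighted H\"older/Sobolev spaces adapted to the cone, and closedness via (i) a priori $C^0$ and $C^2$ estimates and (ii) a partial $C^0$-estimate permitting passage to a Gromov--Hausdorff limit whose complex structure can be identified algebraically, at which point log $K$-polystability is invoked to exclude destabilising degenerations.

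The main obstacle is precisely this hard direction. For cscK metrics, unlike the \ke case where the complex Monge--Amp\`ere equation and Cheeger--Colding--Tian theory are available, there is no second-order reduction of the equation and no partial $C^0$-estimate in general, so even the \emph{smooth} Donaldson--Tian--Yau conjecture (Conjecture \ref{usualdtyconj}) remains open; the conical setting only compounds this with the delicate regularity across $D$. For this reason I would not expect to prove the conjecture in full. The realistic, genuinely provable target is its restriction to the symmetric situation of Theorem \ref{mainthmmc}, where the momentum construction collapses the cscK equation to a linear ODE and the biconditional reduces to matching the solvability of that ODE (with the prescribed boundary behaviour encoding the cone angle) against the vanishing of $\textup{Fut}_{D,\beta}(\Xi,[\omega])$ on the fibrewise $\cx^*$-action --- a finite computation rather than a global PDE existence theorem.
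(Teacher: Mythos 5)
You are right not to attempt a proof: the statement is stated in the paper as a conjecture (the log Donaldson--Tian--Yau conjecture), the paper offers no proof of it, and its actual contribution is exactly what you identify at the end --- Theorem \ref{mainthmmc}, where the momentum construction reduces the cscK equation to a linear ODE and the cone angle forced by the boundary conditions (equation (\ref{formcangle})) is matched against the angle forced by the vanishing of $\textup{Fut}_{D,\beta}$ (equation (\ref{futformcangle})). Your sketch of the two implications is a reasonable summary of the known strategies in the literature, but none of it appears in this paper, so there is nothing to compare beyond noting that your assessment of what is provable here coincides with what the author actually proves.
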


\begin{remark} \label{logdtyfanosolv}
	When $X$ is Fano with $L = - \lambda K_X$ (for some $\lambda \in \mathbb{N}$) and $D \in |- \lambda K_X|$, this conjecture is solved in the affirmative. Berman \cite{ber} first proved that the existence of conically singular \ke metric with cone angle $2 \pi \beta$ implies log $K$-stability of $((X, D) ; - \lambda K_X)$ with cone angle $2 \pi \beta$. Chen--Donaldson--Sun \cite{cds1, cds2, cds3} proved that the log $K$-stability with cone angle $2 \pi \beta$ implies the existence of the conically singular \ke metric with cone angle $2 \pi \beta$, in the course of proving the ``ordinary'' version of the Yau--Tian--Donaldson conjecture (Conjecture \ref{usualdtyconj}) for Fano manifolds; see also Tian \cite{tian12}. 
\end{remark}

Let $f \in C^{\infty} (X , \cx)$ be the holomorphy potential, with respect to $\omega$, of the holomorphic vector field $\Xi_f$ on $X$ which preserves $D$. Recall that we use the sign convention $\iota (\Xi_f) \omega = - \bar{\partial} f$ for the holomorphy potential. Let $((\mathcal{X} , \mathcal{D}); \mathcal{L})$ be the product log test configuration defined by $\Xi_f$ (cf.~Remark \ref{1psinaut0eqprtclog}). In this case, a straightforward adaptation of the argument in \cite[\S 2]{dontoric} shows the following.

\begin{theorem}
\emph{(Donaldson \cite{dontoric, donconic})} \label{dfdiff}
The log Donaldson--Futaki invariant reduces to the following differential-geometric formula
\begin{align}
DF (\mathcal{X} , \mathcal{D} , \mathcal{L} , \beta) &= \textup{Fut}_{D, \beta} (\Xi_f , [ \omega ])  \label{dfsd} \\
&:= \frac{1}{2 \pi} \textup{Fut} (\Xi_f , [ \omega ]) -  (1 - \beta) \left( \int_D f \frac{\omega^{n-1}}{(n-1)!} - \frac{\textup{Vol}(D , \omega)}{\textup{Vol}(X , \omega)} \int_X f \frac{\omega^n}{n!} \right) ,  \notag
\end{align}
defined for some (in fact any) smooth \kah metric $\omega \in c_1(L)$, when the log test configuration $((\mathcal{X} , \mathcal{D}); \mathcal{L})$ is product, defined by the holomorphic vector field $\Xi_f$ on $X$ which preserves $D$. In the formula above, $\textup{Vol}(D, \omega) := \int_D \frac{\omega^{n-1}}{(n-1)!}$ and $\textup{Vol}(X, \omega) := \int_X \frac{\omega^n}{n!}$ are the volumes given by the smooth \kah metric $\omega \in c_1 (L)$.
\end{theorem}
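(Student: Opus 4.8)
The plan is to follow Donaldson's computation in \cite[\S 2]{dontoric}, applied in parallel to the pair $(\mathcal{X}, \mathcal{L})$ and to the induced divisorial test configuration $(\mathcal{D}, \mathcal{L}|_{\mathcal{D}})$, and to exploit the fact that the definition (\ref{defoflogdfinv}) of the log Donaldson--Futaki invariant splits cleanly into a ``bulk'' term governed by $(\mathcal{X}, \mathcal{L})$ alone and an ``extra'' term built only from the leading coefficients $a_0, b_0, \tilde{a}_0, \tilde{b}_0$. The first is handled verbatim by Theorem \ref{cldfinvdiffform}, while the second requires only the leading-order asymptotics of the Hilbert and weight polynomials on $X$ and on $D$.

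First I would treat the bulk term. Since $(\mathcal{X}, \mathcal{L})$ is the product test configuration generated by $\Xi_f$, Definition \ref{defdfinvag} gives $\frac{a_0 b_1 - a_1 b_0}{a_0} = DF(\mathcal{X}, \mathcal{L})$, so that $\frac{2(a_0 b_1 - a_1 b_0)}{a_0} = 2\, DF(\mathcal{X}, \mathcal{L})$. Theorem \ref{cldfinvdiffform} then identifies this with $\frac{1}{2\pi} \int_X f(S(\omega) - \bar{S}) \frac{\omega^n}{n!} = \frac{1}{2\pi} \textup{Fut}(\Xi_f, [\omega])$, which is exactly the first term of (\ref{dfsd}); in particular the factor $2\pi$ in (\ref{dfsd}) is entirely accounted for at this stage.

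It remains to match the extra term $(1-\beta)\left(\tilde{b}_0 - \frac{\tilde{a}_0}{a_0} b_0\right)$ with the divisorial correction in (\ref{dfsd}). Here I would compute the relevant leading coefficients by Riemann--Roch and equivariant Riemann--Roch. The Hilbert polynomials give $a_0 = \int_X \frac{\omega^n}{n!} = \textup{Vol}(X, \omega)$ and $\tilde{a}_0 = \int_D \frac{\omega^{n-1}}{(n-1)!} = \textup{Vol}(D, \omega)$, using $[\omega] = c_1(L)$ and $[\omega|_D] = c_1(L|_D)$, so that $\frac{\tilde{a}_0}{a_0} = \frac{\textup{Vol}(D, \omega)}{\textup{Vol}(X, \omega)}$. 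For the weights, the leading term of equivariant Riemann--Roch (the Duistermaat--Heckman asymptotics of the weight distribution) expresses the trace of the $\cx^*$-generator on $H^0(X, L^{\otimes k})$ (resp.~on $H^0(D, (L|_D)^{\otimes k})$) as the integral of the associated moment map against the symplectic volume. Since the holomorphy potential $f$ normalised by $\iota(\Xi_f) \omega = - \bar{\partial} f$ serves precisely as this moment map, I obtain $b_0 = \int_X f \frac{\omega^n}{n!}$ and $\tilde{b}_0 = \int_D f \frac{\omega^{n-1}}{(n-1)!}$. Substituting, $\tilde{b}_0 - \frac{\tilde{a}_0}{a_0} b_0 = \int_D f \frac{\omega^{n-1}}{(n-1)!} - \frac{\textup{Vol}(D, \omega)}{\textup{Vol}(X, \omega)} \int_X f \frac{\omega^n}{n!}$, which is the bracketed correction in (\ref{dfsd}); combining with the bulk term yields the formula.

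The main obstacle is the honest bookkeeping of normalisations in the weight asymptotics: one must verify that, with the sign convention $\iota(\Xi_f) \omega = - \bar{\partial} f$ and $[\omega] = c_1(L)$, the leading coefficient $b_0$ of the equivariant weight is exactly $\int_X f \frac{\omega^n}{n!}$ (leading constant $1$, with no spurious factor of $2\pi$, consistently with the $b_0$ entering Theorem \ref{cldfinvdiffform}), and that the identical normalisation applies on $D$ so that the constants cancel in the ratio $\tilde{b}_0 - \frac{\tilde{a}_0}{a_0} b_0$. This in turn rests on $\Xi_f$ preserving $D$: tangency of $\Xi_f$ to $D$ is what makes the restriction $f|_D$ the holomorphy potential of the induced action on $(D, \omega|_D)$, hence what makes $\tilde{b}_0$ the integral of $f$ over $D$. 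Finally, independence of the representative $\omega \in c_1(L)$ can be read off from the algebraic definition (which is manifestly metric-free), or checked directly on the right-hand side by Stokes' theorem on $X$ and on $D$, exactly as in the classical case of Theorem \ref{cldfinvdiffform}; note also that both sides are invariant under the shift $f \mapsto f + \textup{const}$, which is a useful consistency check on the normalisation.
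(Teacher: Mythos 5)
Your proposal is correct and is precisely the argument the paper intends: the paper gives no independent proof of Theorem \ref{dfdiff}, stating only that it follows by "a straightforward adaptation of the argument in \cite[\S 2]{dontoric}", and your decomposition into the bulk term (handled by Theorem \ref{cldfinvdiffform}, with the factor $2$ in (\ref{defoflogdfinv}) converting $\frac{1}{4\pi}$ into $\frac{1}{2\pi}$) plus the leading-order identifications $a_0 = \textup{Vol}(X,\omega)$, $\tilde{a}_0 = \textup{Vol}(D,\omega)$, $b_0 = \int_X f \frac{\omega^n}{n!}$, $\tilde{b}_0 = \int_D f \frac{\omega^{n-1}}{(n-1)!}$ via (equivariant) Riemann--Roch is exactly that adaptation, with the normalisation of $b_0$ you flag as the delicate point being confirmed by the paper's own remark following Lemma \ref{holinvcstf}.
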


We may call the above $\textup{Fut}_{D , \beta}$ the \textbf{log Futaki invariant}, where the fact that $\textup{Fut}_{D, \beta} (\Xi_f , [ \omega ])$ depends only on the \kah class $[ \omega]$ (and not on the specific choice of the metric) can be shown exactly as the classical case; see e.g.~\cite[\S 4.2]{sze}.

\section{Momentum-constructed cscK metrics with cone singularities along a divisor} \label{mccmcsad}

\subsection{Background and overview} \label{bgmc}

Consider a \kah manifold $(M, \omega_M)$ of complex dimension $n-1$ together with a holomorphic line bundle $p: \mathcal{F} \to M$, endowed with a hermitian metric $h_{\mathcal{F}}$ with curvature form $\gamma :=- \ai \ddbar \log h_{\mathcal{F}}$. We first consider \kah metrics on the total space of $\mathcal{F}$, which can be regarded as an open dense subset of $\mathbb{X}:= \prj (\mathcal{F} \oplus \cx)$; we shall later impose some ``boundary conditions'' for these metrics to extend to $\mathbb{X}$. Consider a \kah metric on the total space of $\mathcal{F}$ of the form\footnote{We shall use the convention $d^c := \ai ( \bar{\partial} - \partial)$.} $p^* \omega_M + dd^c f (t)$, where $f$ is a function of $t$, and $t$ is the log of the fibrewise norm function defined by $h_{\mathcal{F}}$ serving as a fibrewise radial coordinate. A \kah metric of this form is said to satisfy the \textbf{Calabi ansatz}.

This setting was studied by Hwang \cite{hwang} in terms of the moment map associated to the fibrewise $U(1)$-action on the total space of $\mathcal{F}$; see also \cite{acgi, acgtfii, acgtfiii, hs}. Suppose that we write $\frac{\partial}{\partial \theta}$ for the generator of this $U(1)$-action, normalised so that $\exp (2 \pi \frac{\partial}{\partial \theta}) =1$, and $\tau$ for the corresponding moment map with respect to the \kah form $\omega_f :=p^* \omega_M + dd^c f (t)$. An observation of Hwang and Singer \cite{hs} was that the function $|| \frac{\partial}{\partial \theta} ||^2_{\omega_f}$ is constant on each level set of $\tau$, and hence we have a function $\varphi : I \to \rl_{\ge 0}$, defined on the range $I \subset \rl$ of the moment map $\tau$, given by
\begin{equation*}
\varphi (\tau) := \left| \left| \frac{\partial}{\partial \theta} \right| \right|^2_{\omega_f}
\end{equation*}
which is called the \textbf{momentum profile} in \cite{hs}. 

An important point of this theory is that we can in fact ``reverse'' the above construction as follows. We start with some interval $I \subset \rl$ (called \textbf{momentum interval} in \cite{hs}) and $\tau \in I$ such that
\begin{equation} \label{defofomegamtau}
\omega_M (\tau) := \omega_M - \tau \gamma >0,
\end{equation}
and write $\{ p: (\mathcal{F}, h_{\mathcal{F}}) \to (M ,\omega_M) ,I \}$ for this collection of data. We now consider a function $\varphi$ which is smooth on $I$ and positive on the interior of $I$. Proposition 1.4 (and also \S 2.1) of \cite{hs} shows that the \kah metric on $\mathcal{F}$ defined by
\begin{equation} \label{defofomegaphimc}
\omega_{\varphi} := p^* \omega_M - \tau p^* \gamma + \frac{1}{\varphi} d \tau \wedge d^c \tau = p^* \omega_M (\tau)  + \frac{1}{\varphi} d \tau \wedge d^c \tau
\end{equation}
is equal to $\omega_f =p^* \omega_M + dd^c f (t)$ satisfying the Calabi ansatz, where $(f,t)$ and $(\varphi , \tau)$ are related in the way as described in (2.2) and (2.3) of \cite{hs}.

We now come back to the projective completion $\mathbb{X}= \prj (\mathcal{F} \oplus \cx)$ of $\mathcal{F}$, and suppose that $\omega_f =p^* \omega_M + dd^c f (t)$ extends to a well-defined \kah metric on $\mathbb{X}$. In this case, without loss of generality we may write $I = [-b,b]$ for some $b>0$; $\tau =b$ (resp. $\tau = -b$) corresponds to the $\infty$-section (resp. $0$-section) of $\mathbb{X}= \prj (\mathcal{F} \oplus \cx)$, cf.~\cite[\S 2.1]{hs}. Hwang \cite{hwang} proved\footnote{See also \cite[Proposition 1.4 and \S 2.1]{hs}. The boundary condition of $\varphi$ at $\partial I = \{ \pm b \}$ will be discussed later in detail.} that the condition for $\omega_{\varphi}$ defined by (\ref{defofomegaphimc}) to extend to a well-defined \kah metric on $\mathbb{X}$ is given by the following boundary conditions for $\varphi$ at $\partial I$: $\varphi (\pm b)=0$ and $\varphi' (\pm b) = \mp2$. We can thus construct a \kah metric $\omega_{\varphi}$ on $\mathbb{X}$ from the data $\{ p: (\mathcal{F}, h_{\mathcal{F}}) \to (M ,\omega_M) ,I \}$, and such $\omega_{\varphi}$ is said to be \textbf{momentum-constructed}.

We recall the following notion.
\begin{definition} \label{defsigmaconst}
The data $\{ p: (\mathcal{F} , h_{\mathcal{F}} ) \to (M , \omega_M) , I  \}$ are said to be \textbf{$\sigma$-constant} if the curvature endomorphism $\omega_M^{-1} \gamma$ has constant eigenvalues on $M$, and the \kah metric $\omega_M(\tau)$ (on $M$) has constant scalar curvature for each $\tau \in I$.
\end{definition}

The advantage of assuming the $\sigma$-constancy is that the scalar curvature $S(\omega_{\varphi})$ of $\omega_{\varphi}$ can be written as
\begin{equation} \label{scalmomconsmc}
S(\omega_{\varphi}) = R(\tau)  - \frac{1}{2Q} \frac{\partial^2}{\partial \tau^2} (\varphi Q) (\tau)
\end{equation}
in terms of $\tau$, where
\begin{equation} \label{defofq}
Q(\tau) := \frac{\omega_M (\tau)^{n-1}}{\omega_M^{n-1}}
\end{equation}
and
\begin{equation} \label{defofr}
R(\tau) := \mathrm{tr}_{\omega_M (\tau)} \mathrm{Ric} (\omega_M)
\end{equation}
are both functions of $\tau$ by virtue of the $\sigma$-constancy hypothesis. Note that (\ref{scalmomconsmc}) means that the cscK equation $S(\omega_{\varphi}) = \cst$ is now a second order linear ODE.

In what follows, we assume that $(M, \omega_M)$ is a product of \ke manifolds $(M_i , \omega_i)$, and $\mathcal{F} := \bigotimes_{i=1}^r p_i^* K_i^{\otimes l_i} $, where $l_i \in \itg$, $p_i : M \surj M_i$ is the obvious projection, and $K_i$ is the canonical bundle of $M_i$ (we can in fact assume $l_i \in \rtn$ as long as $K_i^{\otimes l_i} $ is a genuine line bundle, rather than a $\rtn$-line bundle). It is easy to see that this satisfies the $\sigma$-constancy. We also assume that each $M_i$ is Fano, as in \cite{hwang}; this hypothesis is needed in the Appendix A of \cite{hwang}, which will also be used in \S \ref{construct}. 

We now recall the work of Hwang (cf.~\cite[Theorem 1]{hwang}), who constructed an extremal metric on $\mathbb{X} = \prj (\mathcal{F} \oplus \cx)$ in every \kah class.

\begin{theorem}
\emph{(Hwang \cite[Corollary 1.2 and Theorem 2]{hwang})} \label{thhwapcomplb}
The projective completion $\prj (\mathcal{F} \oplus \cx)$ of a line bundle $\mathcal{F} := \bigotimes_{i=1}^r p_i^* K_i^{\otimes l_i} $, over a product of \ke Fano manifolds, each with the second Betti number $1$, admits an extremal metric in each \kah class. 
\end{theorem}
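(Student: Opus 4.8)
The plan is to exploit the momentum construction to convert the extremal equation into an explicitly solvable ODE, in the same spirit as the cscK reduction (\ref{scalmomconsmc}). Recall that in the Calabi-symmetric setting a momentum-constructed metric $\omega_{\varphi}$ is extremal precisely when its scalar curvature is an affine function of the moment coordinate, i.e.~$S(\omega_{\varphi}) = a \tau + c$ for constants $a , c \in \rl$ (the holomorphy potential of the extremal field in the fibre direction being affine in $\tau$). Since $(M , \omega_M)$ is a product of \ke Fano manifolds and $\mathcal{F} = \bigotimes_i p_i^* K_i^{\otimes l_i}$, the data are $\sigma$-constant (Definition \ref{defsigmaconst}), so (\ref{scalmomconsmc}) applies and the extremal condition becomes the second order linear ODE
\begin{equation*}
(\varphi Q)''(\tau) = 2 Q(\tau) \left( R(\tau) - a \tau - c \right)
\end{equation*}
for the unknown $\Phi := \varphi Q$. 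Under our hypotheses $b_2 (M_i) = 1$ forces the restriction of $\gamma$ to each factor $M_i$ to be a constant multiple of $\omega_i$, so $\omega_M^{-1} \gamma$ has constant eigenvalues; by (\ref{defofq}) and (\ref{defofr}) the function $Q$ is then a polynomial in $\tau$ and $Q R$ is again a polynomial, so the right-hand side is polynomial and $\Phi$ is a polynomial we can write down explicitly.

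First I would solve the ODE. Writing $I = [-b , b]$, the requirement that $\omega_{\varphi}$ extend to a genuine \kah metric on $X = \prj (\mathcal{F} \oplus \cx)$ is encoded in the boundary data $\varphi(\pm b) = 0$, $\varphi'(\pm b) = \mp 2$, which since $Q(\pm b) > 0$ translate into $\Phi(\pm b) = 0$ and $\Phi'(\pm b) = \mp 2 Q(\pm b)$. Fixing the initial conditions $\Phi(-b) = 0$, $\Phi'(-b) = 2 Q(-b)$ and integrating twice, the remaining two conditions at $\tau = b$ become, after integration by parts, the two moment identities
\begin{equation*}
\int_{-b}^b Q (R - a \tau - c)\, d\tau = -\big( Q(b) + Q(-b) \big), \qquad \int_{-b}^b \tau Q (R - a \tau - c)\, d\tau = b \big( Q(-b) - Q(b) \big).
\end{equation*}
This is a linear system in $(a , c)$ whose coefficient matrix is the Gram matrix of $\{ 1 , \tau \}$ for the measure $Q\, d\tau$; since $Q > 0$ on $I$ its determinant is strictly negative by the Cauchy--Schwarz inequality, so the system has a unique solution $(a , c)$, hence a unique $\Phi$ (equivalently $\varphi$). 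This is the differential-geometric avatar of the fact that the extremal affine function is the $L^2 (Q\, d\tau)$-projection of $R$ onto affine functions.

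The hard part will be positivity: to recover a \kah metric via (\ref{defofomegaphimc}) I must show $\varphi > 0$ on the open interval $(-b , b)$, i.e.~$\Phi > 0$ there. The boundary data give $\Phi'(-b) = 2 Q(-b) > 0$ and $\Phi'(b) = -2 Q(b) < 0$, so $\Phi$ rises from $0$ at the left endpoint and falls back to $0$ at the right; the difficulty is excluding any interior zero, which does not follow from the boundary behaviour alone because $R - a \tau - c$ changes sign. This is exactly the step requiring the Fano hypothesis on each $M_i$: \ke positivity gives $\ric(\omega_M) > 0$, whence $R(\tau) = \tr_{\omega_M (\tau)} \ric(\omega_M) > 0$ on $I$, and together with the explicit polynomial form of $Q$ and $QR$ this enables the delicate positivity analysis of Appendix A of \cite{hwang}, which I would invoke to conclude $\Phi > 0$ on $(-b , b)$.

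Finally I would assemble the metric: with the positive profile $\varphi$ in hand, (\ref{defofomegaphimc}) defines a momentum-constructed \kah metric $\omega_{\varphi}$ on $X$ whose scalar curvature is $a \tau + c$ by construction, hence extremal. Running this construction as $b > 0$ and the base form $\omega_M$ range over the admissible data recovers every \kah class of $\prj(\mathcal{F} \oplus \cx)$ (the classes being parametrised by the fibre size $b$ together with $[\omega_M]$ under the $b_2 (M_i) = 1$ hypothesis), yielding an extremal representative in each \kah class and proving the theorem.
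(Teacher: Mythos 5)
Your proposal is correct and follows essentially the same route as the paper (which itself recalls Hwang's construction in \S\ref{construct} and Remark \ref{kclmcmcvol}): reduce the extremal equation to the linear ODE $(\varphi Q)'' = 2Q(R - \sigma_0 - \lambda\tau)$ via $\sigma$-constancy, determine $(\sigma_0,\lambda)$ from the boundary conditions through the $2\times 2$ system with matrix $\begin{pmatrix} A & B \\ B & C\end{pmatrix}$, $AC - B^2 > 0$ by Cauchy--Schwarz, invoke Hwang's Appendix A (where the Fano hypothesis enters) for interior positivity of the profile, and use the fibrewise volume $4\pi b$ together with $b_2(M_i)=1$ to sweep out every K\"ahler class. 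The only slip is cosmetic: the Gram determinant $AC - B^2$ is strictly \emph{positive}, not negative, though unique solvability holds either way.
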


\begin{remark}
We also recall that the scalar curvature of these extremal metrics can be written as $S (\omega_{\varphi} ) = \sigma_0 + \lambda \tau$ where $\sigma_0$ and $\lambda$ are constants (cf.~\cite[Lemma 3.2]{hwang}).
\end{remark}

Whether this extremal metric is in fact cscK depends on if the (classical) Futaki invariant vanishes (Theorem \ref{cldfinvdiffform}); see also e.g.~\cite[Corollary 4.22]{sze}. Hwang's argument, however, gives the following alternative viewpoint on this problem. The above formula $S (\omega_{\varphi} ) = \sigma_0 + \lambda \tau$ for the scalar curvature of the extremal metric implies that $\omega_{\varphi}$ is cscK if and only if $\lambda =0$, and hence the question reduces to whether there exists a well-defined extremal \kah metric $\omega_{\varphi}$ such that $S(\omega_{\varphi})$ has $\lambda = 0$. As Hwang \cite{hwang} shows, the obstruction for achieving this is the following boundary conditions for $\varphi$ at $\partial I = \{-b , +b \}$: $\varphi(\pm b) =0$ and $\varphi ' (\pm b) = \mp 2$. They are the conditions that must be satisfied for $\omega_{\varphi}$ to be a well-defined smooth metric on $\mathbb{X}$; $\varphi(\pm b) =0$ means that the fibres ``close up'', and $\varphi' (\pm b) = \mp 2$ means that the metric is smooth along the $\infty$-section (resp.~0-section). 

It is not possible to achieve $\lambda= 0$, $\varphi(\pm b) =0$, $\varphi ' (\pm b) = \mp 2$ all at the same time if the Futaki invariant is not zero. On the other hand, however, we can brutally set $\lambda =0$ and try to see what happens to $\varphi (\pm b)$ and $\varphi '(\pm b)$. In fact, it is possible to set $\lambda =0$, $\varphi (\pm b) =0$, and $\varphi (-b) = 2$ all at the same time\footnote{It is possible to set $\varphi (b) = - 2$ instead of $\varphi (-b) = 2$ in here, and in this case $\omega_{\varphi}$ will be smooth along the $\infty$-section with cone singularities along the $0$-section; this is purely a matter of convention. However, just to simplify the argument, we will assume henceforth that $\omega_{\varphi}$ is always smooth along the $0$-section with the cone singularities forming along the $\infty$-section.}, as discussed in \cite[\S 3.2]{hwang} and recalled in \S \ref{construct} below. Thus, we should have $\varphi ' (b) \neq - 2$ if the Futaki invariant is not zero. A crucially important point for us is that the value $- \pi \varphi' ( b)  = 2 \pi \beta$ is the angle of the \textit{cone singularities} that the metric develops along the $\infty$-section, if $\varphi$ is real analytic on $I$. This point is briefly mentioned in \cite[p2299]{hs} and seems to be well-known to the experts (cf.~\cite[Lemma 2.3]{lih12}). However, as the author could not find an explicitly written proof in the literature, the proof of this fact is provided in Lemma  \ref{lemestgmc}, \S \ref{mccone}. 

What we prove in \S \ref{construct} is that it is indeed possible to run the argument as above, namely it is indeed possible to have a cscK metric on $\mathbb{X}$ in each \kah class, at the cost of introducing cone singularities along the $\infty$-section. An important point here is that the cone angle $2 \pi \beta$ is \textit{uniquely determined in each K\"ahler class}; we can even obtain an explicit formula (equation (\ref{formcangle})) for the cone angle. 

We compute in \S \ref{logfutmc} the log Futaki invariant. The point is that the computation becomes straightforward by using the extremal metric, afforded by Theorem \ref{thhwapcomplb}. It turns out that the vanishing of the log Futaki invariant gives an equation for $\beta$ to satisfy (equation (\ref{futformcangle})); in other words, there is a unique value of $\beta$ for which the log Futaki invariant vanishes. The content of our main result, Theorem \ref{mainthmmc}, is that this value of $\beta$ agrees with the one for which there exists a momentum-constructed conically singular cscK metric with cone angle $2 \pi \beta$ (equation (\ref{formcangle})).  

\begin{remark} \label{kclmcmcvol}
The hypothesis $b_2(M_i)=1$ in Theorem \ref{mainthmmc} is to ensure that each \kah class of $\mathbb{X}$ can be represented by a momentum-constructed metric, as we now explain. Observe first that $b_2(M_i)=1$ implies $H^2(M, \rl) = \bigoplus_i \rl [ p_i^* \omega_i ]$, by recalling that every Fano manifold is simply connected (cf. \cite{debarre}). Thus recalling the Leray--Hirsch theorem, we have 
\begin{equation*}
H^2 (\mathbb{X}, \rl) = p^* H^2(M, \rl) \oplus \rl c_1(\xi) = p^* \left(  \bigoplus_i \rl [  p_i^*  \omega_i ] \right) \oplus \rl c_1(\xi) ,
\end{equation*}
i.e.~each \kah class on $\mathbb{X}$ can be written as $\sum_{i=1}^r \alpha_i p^* [ p_i^* \omega_i ] + \alpha_{r+1} c_1 (\xi)$ for some $\alpha_i >0$, where $\xi$ is the dual of the tautological bundle on $\mathbb{X}$. We can now prove (cf.~\cite[Lemma 4.2]{hwang}) that each \kah class can be represented by a momentum-constructed metric $\omega_{\phi} = p^* \omega_M - \tau p^* \gamma + \frac{1}{\phi} d \tau \wedge d^c \tau $ as follows. Observe now that the form $- \tau p^* \gamma + \frac{1}{\phi} d \tau \wedge d^c \tau$ is closed. Thus its cohomology class can be written as
\begin{equation*}
\left[ - \tau p^* \gamma + \frac{1}{\phi} d \tau \wedge d^c \tau \right] = \sum_{i=1}^r \alpha'_i p^* [ p_i^* \omega_i ] + \alpha'_{r+1} c_1 (\xi)
\end{equation*}
for some $\alpha'_i >0$. We shall prove in Lemma \ref{lemfbwv4pbps} that any momentum-constructed metric with the momentum interval $I = [-b,b]$ has fibrewise volume $4 \pi b$. This proves $\alpha'_{r+1} = 4 \pi b$. Thus, writing $\omega_{M} = \sum_{i=1}^r \tilde{\alpha}_i \omega_i$, we see that $[\omega_{\phi} ] = \sum_{i=1}^r (\alpha'_i + \tilde{\alpha}_i) p^* [ p_i^* \omega_i ] + 4 \pi b c_1 (\xi) $. Thus, given any \kah class in $\kappa \in H^2 (\mathbb{X}, \rl)$, we can choose $\tilde{\alpha}_i$ and $b$ appropriately so that $[\omega_{\phi}] = \kappa$.

\end{remark}

\begin{remark} \label{remcangle01}
We do not necessarily have $0 < \beta <1$ in Theorem \ref{mainthmmc}; although $\beta \ge 0$ always holds, as we prove in \S \ref{construct}, there are examples\footnote{Results with $\beta >1$ are also given in \cite{mp}.} where $\beta  > 1$. Indeed, when we take $M = \prj^1 \times \prj^1$, $\omega_M = p_1^* \omega_{KE} + p_2^* \omega_{KE}$ for the \ke metric $\omega_{KE} \in 2 \pi c_1 (-K_{\prj^1})$ and $\mathcal{F} = p_1^* (-K_{\prj^1}) \otimes p_2^* (2 K_{\prj^1})$, we always have $\beta >1$ as shown in Figure \ref{graph2}, by noting that $0 < b < 0.5$ gives a well-defined momentum interval.

\begin{figure}
\begin{center}
\includegraphics[clip,width=10.0cm]{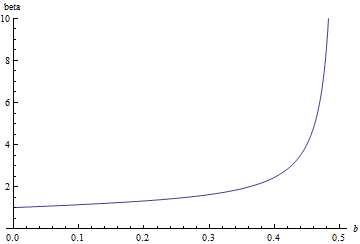}
\caption{Graph of $\beta$ as a function of $b$ for $\mathcal{F} = p_1^* (K^{-1}_{\prj^1}) \otimes p_2^* (K^2_{\prj^1})$ on $M = \prj^1 \times \prj^1$.}
\label{graph2}
\end{center}
\end{figure}

On the other hand, as shown in Figure \ref{graph1}, $\mathcal{F} = p_1^* (-2K_{\prj^1}) \otimes p_2^* (K_{\prj^1})$ with $M$ and $\omega_M$ as above, $0< b< 0.5$ implies $0.3 \lesssim \beta <1$; in particular Theorem \ref{mainthmmc} is not vacuous even if we impose an extra condition $0 < \beta <1$.

\begin{figure}
\begin{center}
\includegraphics[clip,width=10.0cm]{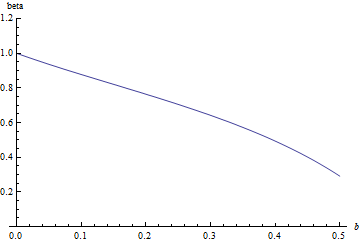}
\caption{Graph of $\beta$ as a function of $b$ for $\mathcal{F} = p_1^* (K^{-2}_{\prj^1}) \otimes p_2^* (K_{\prj^1})$ on $M = \prj^1 \times \prj^1$.}
\label{graph1}
\end{center}
\end{figure}

The author could not find an example where $\beta =0$ is achieved, which (at least heuristically) corresponds to the cuspidal singularity (cf.~\cite{guecusp}).
\end{remark}

\subsection{Some properties of momentum-constructed metrics with $\varphi' (b) = -2 \beta$} \label{mccone}
We do \textit{not} assume in this section that the $\sigma$-constancy hypothesis (cf.~Definition \ref{defsigmaconst}) is necessarily satisfied, but \textit{do} assume that $\varphi$ is real analytic.

We first prove that $ \varphi ' (b) = -2 \beta$ does indeed define a \kah metric that is conically singular along the $\infty$-section. The author thanks Michael Singer for the advice on the proof of the following lemma.
\begin{lemma} \label{lemestgmc}
	\emph{(Singer \cite{test1}, Li \cite[Lemma 2.3]{lih12})}
Suppose that $\omega_{\varphi}$ is a momentum-constructed \kah metric on $\mathbb{X} = \prj (\mathcal{F} \oplus \cx)$ with the momentum interval $I = [-b,b]$ and the momentum profile $\varphi$ that is real analytic on $I$ with $\varphi (\pm b)=0$, $\varphi ' (-b) = 2$, and $\varphi '(b) = -2 \beta$. Then $\omega_{\varphi}$ is smooth on $\mathbb{X} \setminus D$, where $D = \{ \tau = b \}$ is the $\infty$-section, and has cone singularities along $D$ with cone angle $2 \pi \beta$. Moreover, choosing the local coordinate system $(z_1 , \dots , z_{n})$ on $\mathbb{X}$ so that $D= \{ z_1 =0 \}$ and that $(z_2 , \dots , z_{n})$ defines a local coordinate system on the base $M$, $b- \tau$ can be written as a locally uniformly convergent power series
\begin{equation*}
b- \tau = A_0 |z_1|^{2 \beta} \left( 1 + \sum_{i=1}^{\infty} A_i |z_1|^{2 \beta i} \right)
\end{equation*}
around $D = \{ \tau=b \} = \{ z_1 =0  \}$, where $A_i$'s are smooth functions which depend only on the local coordinates $(z_2 , \dots , z_n)$ on $M$, and $A_0 >0$ is in addition bounded away from 0.

Thus $\varphi (\tau)$ can be written as a locally uniformly convergent power series around $D$
\begin{equation} \label{expnsofvarphibetalem}
\varphi (\tau) = 2 \beta A'_1 |z_1|^{2\beta}  + \sum_{i=2}^{\infty} A'_i |z_1|^{2 \beta i},
\end{equation}
where $A'_i$'s are smooth functions which depend only on the local coordinates $(z_2 , \dots , z_n)$ on $M$, and $A'_1 >0$ is in addition bounded away from 0. This means that the metric $g_{\varphi}$ corresponding to $\omega_{\varphi}$ satisfies the following estimates around $D$:
\begin{enumerate}
\item $(g_{\varphi})_{1 \bar{1}} = O(|z_1|^{2 \beta -2})$, 
\item $(g_{\varphi})_{1 \bar{j}} = O(|z_1|^{2 \beta -1})$ ($j \neq 1$), 
\item $(g_{\varphi})_{i \bar{j}} = O(1)$ ($i,j \neq 1$),
\end{enumerate}
i.e.~$\omega_{\varphi}$ is a \kah metric with cone singularities along $D$ with cone angle $2 \pi \beta$ (cf.~Definition \ref{defcsmcgen}).
\end{lemma}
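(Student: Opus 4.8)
The plan is to reduce the entire statement to a single real-analytic inversion carried out fibrewise for $p : X \to M$. Away from $D$ there is essentially nothing to do: on the interior $\tau \in (-b,b)$ we have $\varphi > 0$ and $\omega_M(\tau) > 0$ by the defining data, so (\ref{defofomegaphimc}) is a smooth positive $(1,1)$-form, and the boundary condition $\varphi'(-b) = 2$ is exactly the condition recalled in \S\ref{bgmc} that forces $\omega_{\varphi}$ to extend smoothly (and positively) across the $0$-section $\{\tau = -b\}$. Hence $\omega_{\varphi}$ is a smooth \kah metric on $X \setminus D$, and all the real work is local near $D = \{\tau = b\}$.

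Next I would recall from \cite{hs} that, writing $t$ for the fibrewise radial coordinate (the logarithm of the fibrewise norm determined by $h_{\mathcal{F}}$), one has $dt/d\tau = 1/\varphi(\tau)$ along each fibre; moreover, since $\varphi$ is a function of $\tau$ alone, the function $\tau = \tau(t)$ is the same on every fibre, and the base enters only through the relation $t = -\log|z_1| + \chi(z_2,\dots,z_n)$, where $z_1$ is the transverse coordinate with $D = \{z_1 = 0\}$ and $\chi$ is the smooth function coming from $h_{\mathcal{F}}$. Set $u := b - \tau$. Because $\varphi$ is real analytic with $\varphi(b) = 0$ and $\varphi'(b) = -2\beta \neq 0$, it has a simple zero at $\tau = b$, so $1/\varphi(\tau) = 1/(2\beta u) + k(u)$ with $k$ real analytic near $u = 0$. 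Integrating $dt/du = -1/\varphi$ gives
\begin{equation*}
t = -\frac{1}{2\beta}\log u - K(u) + t_0 ,
\end{equation*}
with $K$ a real-analytic antiderivative of $k$ vanishing at $0$ and $t_0$ a genuine constant (fibre-independent, since $\tau(t)$ is). Exponentiating, $u$ solves the implicit equation $u = E\,G(u)$, where $E := e^{-2\beta(t - t_0)}$ and $G(u) := e^{-2\beta K(u)}$ is real analytic with $G(0) = 1$.

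The key step is the inversion of $u = E\,G(u)$. Since $\partial_u\bigl(u - E\,G(u)\bigr) = 1$ at $(u,E) = (0,0)$, the analytic implicit function theorem (equivalently Lagrange inversion) yields a unique convergent power series $u = E\bigl(1 + \sum_{i \geq 1} a_i E^i\bigr)$ solving it near $E = 0$, with the $a_i$ constants because $G$ depends on $u$ alone. Substituting $E = e^{-2\beta(t - t_0)} = |z_1|^{2\beta}\,\Phi(z_2,\dots,z_n)$, where $\Phi = e^{2\beta t_0}e^{-2\beta\chi} > 0$ is smooth and locally uniformly bounded away from $0$, and re-expanding gives the asserted locally uniformly convergent series
\begin{equation*}
b - \tau = A_0 |z_1|^{2\beta}\Bigl(1 + \sum_{i=1}^{\infty} A_i |z_1|^{2\beta i}\Bigr) ,
\end{equation*}
with $A_0 = \Phi > 0$ and each $A_i$ a smooth function of $(z_2,\dots,z_n)$; the expansion (\ref{expnsofvarphibetalem}) for $\varphi$ then follows by composing $\varphi(b - u) = 2\beta u + O(u^2)$ with this series, its leading coefficient being $2\beta A_0$ (so $A'_1 = A_0$). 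I expect this inversion, in particular checking the local uniform convergence of the resulting double series in $|z_1|^{2\beta}$ and the smooth, strictly positive dependence of the coefficients on the base, to be the main technical obstacle.

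Finally, the metric estimates are a direct computation from (\ref{defofomegaphimc}). The base part $p^*\omega_M(\tau) = p^*\omega_M - \tau\,p^*\gamma$ contributes only $O(1)$ to the components $(g_{\varphi})_{i\bar{j}}$ with $i,j \neq 1$. For the fibre part one has $d\tau \wedge d^c\tau = 2\ai\,\partial\tau \wedge \bar{\partial}\tau$, giving $(g_{\varphi})_{k\bar{l}} \sim \varphi^{-1}\,\partial_k\tau\,\partial_{\bar{l}}\tau$; using $\partial_{z_1}(b-\tau) = O(|z_1|^{2\beta - 1})$, $\partial_{z_j}(b-\tau) = O(|z_1|^{2\beta})$ for $j \neq 1$, and $\varphi \sim 2\beta A_0 |z_1|^{2\beta}$, one reads off $(g_{\varphi})_{1\bar{1}} = O(|z_1|^{2\beta - 2})$, $(g_{\varphi})_{1\bar{j}} = O(|z_1|^{2\beta - 1})$, and $(g_{\varphi})_{i\bar{j}} = O(1)$ for $i,j \neq 1$. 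These are precisely conditions (1)--(3) of Definition \ref{defcsmcgen}, identifying the cone angle as $2\pi\beta$ and completing the proof.
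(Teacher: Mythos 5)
Your proposal is correct and follows essentially the same route as the paper's proof: both integrate $dt = d\tau/\varphi$ using the simple zero of $\varphi$ at $\tau = b$ with slope $-2\beta$, exponentiate to relate $b-\tau$ to $|z_1|^{2\beta}$ times an analytic perturbation, invert (the paper via raising $|z_1|^2 = A(b-\tau)^{1/\beta}(1+\cdots)$ to the power $\beta$ and the inverse function theorem, you via Lagrange inversion of $u = E\,G(u)$ — the same step in different clothing), and then read off the metric components from the derivatives of $b-\tau$ and the order of $\varphi$. The only cosmetic difference is that the paper delegates smoothness on $X\setminus D$ to Hwang's Lemma 2.5 and Proposition 2.1 rather than re-arguing it.
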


\begin{remark}
Since we can expand $\tau$ and $\varphi (\tau)$ in the powers of $|z_1|^{2 \beta}$, we see from the estimates for $g_{\varphi}$ that the \kah potential for $\omega_{\varphi}$	 is an element of $C^{4 , \alpha, \beta}$ as defined e.g.~in \cite{kellerzheng,lizheng}.
\end{remark}

\begin{proof}
Since Lemma 2.5 and Proposition 2.1 in \cite{hwang} imply that $\omega_{\varphi}$ is smooth on $\mathbb{X} \setminus D$, we only have to check that the condition $\varphi '(b) = -2 \beta$ implies that $\omega_{\varphi}$ has cone singularities along $D$ with cone angle $2 \pi \beta$.

Writing $t$ for the log of the fibrewise length measured by $h_{\mathcal{F}}$, we have
\begin{equation} \label{dtdtauphi}
dt = \frac{d \tau}{ \varphi ( \tau)} ,
\end{equation}
by recalling the equation (2.2) in \cite{hs}. We now write $\varphi$ as a convergent power series in $b- \tau$ around $\tau = b$ as
\begin{equation} \label{expnsofvarphibeta}
\varphi (\tau) = 2 \beta (b - \tau) + \sum_{i=2}^{\infty} a'_i (b - \tau)^i,
\end{equation}
since we assumed that $\varphi$ is real analytic, where $a'_i$'s are real numbers. Note that the coefficient of the first term is fixed by the boundary condition $\varphi' (b) = - 2 \beta$. This gives
\begin{equation*}
t = \frac{1}{2} \log h_{\mathcal{F}} (\zeta , \zeta) = - \frac{1}{2 \beta} \log (b- \tau) + \sum_{i=2}^{\infty} a''_i (b - \tau)^{i-1} + \cst
\end{equation*}
with some real numbers $a''_i$, where $\zeta$ is a fibrewise coordinate on $\mathcal{F} \to M$.

On the other hand, since $\zeta$ is a fibrewise coordinate on $\mathcal{F} \to M$, it gives a fibrewise local coordinate of $\prj (\mathcal{F} \oplus \cx) \to M$ around the 0-section; in other words, at each point $p \in M$, $\zeta$ gives a local coordinate on each fibre $\prj^1$ in the neighbourhood containing $ 0 = [0:1] \in \prj^1$. Since $\tau =b$ defines the $\infty$-section of $\prj (\mathcal{F} \oplus \cx) \to M$, it is better to pass to the local coordinates on $\prj^1$ in the neighbourhood containing $ \infty = [1:0] \in \prj^1$ in order to evaluate the asymptotics as $\tau \to b$. The coordinate change is of course given by $\zeta \mapsto 1 / \zeta =: z_1$, and hence we have
\begin{equation*}
\frac{1}{2} \log h_{\mathcal{F}} (\zeta , \zeta) = \frac{1}{2} \phi_{\mathcal{F}} - \frac{1}{2} \log |z_1|^2 = - \frac{1}{2 \beta} \log (b- \tau) + \sum_{i=2}^{\infty} a''_i (b - \tau)^{i-1} + \cst
\end{equation*}
by writing $h_{\mathcal{F}} = e^{\phi_{\mathcal{F}}}$ locally around a point $p \in M$. This means that there exists a smooth function $A = A (z_2 , \dots , z_n)$ which is bounded away from 0 and depends only on the coordinates $(z_2 , \dots , z_n)$ on $M$ such that
\begin{equation*}
|z_1|^2 = A (b-\tau)^{\frac{1}{\beta}} \left( 1 + \sum_{i=1}^{\infty} a'''_i (b-\tau)  \right) ,
\end{equation*}
with some real numbers $a'''_i$ and hence, by raising both sides of the equation to the power of $\beta$ and applying the inverse function theorem, we have
\begin{equation} \label{expnsofbtauinz}
b- \tau = A_0 |z_1|^{2 \beta} \left( 1 + \sum_{i=1}^{\infty} A_i |z_1|^{2 \beta i} \right)
\end{equation}
as a locally uniformly convergent power series around $D = \{ \tau=b \} = \{ z_1 =0  \}$, where each $A_i = A_i(z_2 , \dots , z_n)$ is a smooth function which depends only on the coordinates $(z_2 , \dots , z_n)$ on $M$, and $A_0>0$ is in addition bounded away from 0. In particular, we have $b-\tau = O(|z_1|^{2 \beta})$, and combined with the equation (\ref{expnsofvarphibeta}), we thus get the result (\ref{expnsofvarphibetalem}) that we claimed.


We now evaluate $\frac{1}{\varphi} d \tau \wedge d^c \tau$ in $\omega_{\varphi} = p^* \omega_M - \tau p^* \gamma + \frac{1}{\varphi} d \tau \wedge d^c \tau$. The above equation (\ref{expnsofbtauinz}) means 
\begin{equation*}
\partial (b - \tau) = A_0 \beta |z_1|^{2 \beta-2} \bar{z}_1 B_1 d z_1 + |z_1|^{2 \beta}   \sum_{i=2}^{n} B_{2,i} dz_i 
\end{equation*}
and 
\begin{equation*}
\bar{\partial} (b - \tau) = A_0 \beta |z_1|^{2 \beta-2} {z_1} B_1 d \bar{z}_1 + |z_1|^{2 \beta} \sum_{i=2}^{n} \overline{B_{2,i}}  d \bar{z}_i, 
\end{equation*}
where we defined $B_1$ and $B_2$ as $B_1 : = 1 + \sum_{i=1}^{\infty} i A_i |z_1|^{2 \beta i}$ and $B_{2,i} := \frac{\partial}{\partial z_i} \left( A_0 + A_0 \sum_{j=1}^{\infty} A_i |z_1|^{2 \beta j} \right)$. We thus have
\begin{align}
d \tau \wedge d^c \tau =& d (b - \tau) \wedge d^c (b - \tau) \notag \\
=& 2 A_0^2 B_1^2 \beta^2 |z_1|^{4 \beta-2}  \ai d z_1 \wedge d \bar{z}_1 \notag \\
&+ 2 \beta |z_1|^{4 \beta-2} \bar{z}_1  A_0B_1 \sum_{i=2}^{n} \overline{B_{2,i}}   \ai d z_1 \wedge d \bar{z}_i + c.c. + O(|z_1|^{4 \beta}) \label{estdtau}.
\end{align}
where $O(|z_1|^{4 \beta})$ stands for a term of the form
\begin{align*}
|z_1|^{4 \beta} &\times (\text{smooth function in } (z_2 , \dots , z_n)) \\
&\times (\text{locally uniformly convergent power series in } |z_1|^{2 \beta}) ,
\end{align*}
and $c.c.$ stands for complex conjugate of the preceding terms.

We now estimate the behaviour of each component $(g_{\varphi})_{i \bar{j}} $ of the \kah metric $\omega_{\varphi} =  \sum_{i,j=1}^{n}(g_{\varphi})_{i \bar{j}} \ai dz_i \wedge d\bar{z}_j$ in terms of the local holomorphic coordinates $ (z_1 , z_2 , \dots , z_{n})$ on $\mathbb{X}$. The above computation with $\varphi (\tau) = O(|z_1|^{2 \beta})$ means that $(g_{\varphi})_{1 \bar{1}} = O(|z_1|^{2 \beta -2})$, $(g_{\varphi})_{1 \bar{j}} = O(|z_1|^{2 \beta -1})$ ($j \neq 1$), $(g_{\varphi})_{i \bar{j}} = O(1)$ ($i,j \neq 1$) as it approaches the $\infty$-section, proving that $\omega_{\varphi}$ has cone singularities of cone angle $2 \pi \beta$ along $D$. 

\end{proof}

We also see that the above means that the inverse matrix $(g_{\varphi})^{\ijbar}$ satisfies the following estimates. 

\begin{lemma} \label{lemginvmc}
Suppose that $g_{\varphi}$ is a momentum-constructed conically singular \kah metric with cone angle $2 \pi \beta$ along $D = \{ z_1 =0 \}$, with the real analytic momentum profile $\varphi$. Then, around $D$,
\begin{enumerate}
\item $(g_{\varphi})^{1 \bar{1}} = O(|z_1|^{2 -2 \beta})$ ,
\item $(g_{\varphi})^{1 \bar{j}} = O(|z_1|)$ if $j \neq 1$ ,
\item $(g_{\varphi})^{i \bar{j}} = O(1)$ if $i, j \neq 1$.
\end{enumerate}
Thus, $\Delta_{\omega_{\varphi}} f = \sum_{i,j=1}^n (g_{\varphi})^{ i \bar{j}} \frac{\partial^2}{ \partial z_i \partial \bar{z}_j} f$ is bounded if $f$ is a smooth function on $\mathbb{X}$. Also, if $f'$ is a smooth function on $\mathbb{X} \setminus D$ that is of order $|z_1|^{2 \beta}$ around $D$, then $\Delta_{\omega_{\varphi}} f'= O(1) + O(|z_1|^{2 \beta}) $. In particular, $\Delta_{\omega_{\varphi}} f'$ remains bounded on $\mathbb{X} \setminus D$.


\end{lemma}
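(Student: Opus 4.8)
The plan is to invert $g_\varphi$ block by block via the Schur complement and then merely track orders of vanishing in $|z_1|$. In the coordinates $(z_1,\dots,z_n)$ of Lemma \ref{lemestgmc}, decompose the Hermitian matrix $(g_\varphi)_{i\bar j}$ into the scalar entry $\alpha := (g_\varphi)_{1\bar 1}$, the off-diagonal row $u := ((g_\varphi)_{1\bar j})_{j\neq 1}$ (with conjugate column $u^*$), and the principal submatrix $E := ((g_\varphi)_{i\bar j})_{i,j\neq 1}$. Lemma \ref{lemestgmc} supplies $\alpha = O(|z_1|^{2\beta-2})$, $u = O(|z_1|^{2\beta-1})$, and $E = O(1)$; moreover the explicit leading coefficient computed there gives a lower bound $\alpha \ge c|z_1|^{2\beta-2}$ with $c>0$, while the entries of $E$ converge to those of the positive-definite matrix $\omega_M(b)$ as $z_1\to 0$, so $E$ is uniformly bounded above and below near $D$ and $E^{-1}=O(1)$.

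With this set-up the Schur complement formula does the rest. The $(1,\bar 1)$ entry of the inverse is $T^{-1}$ with $T = \alpha - uE^{-1}u^*$; since $uE^{-1}u^* = O(|z_1|^{4\beta-2})$ and $4\beta-2 > 2\beta-2$, the term $\alpha$ dominates, giving $T = \alpha(1+o(1)) \ge c'|z_1|^{2\beta-2}$ and hence $(g_\varphi)^{1\bar 1} = T^{-1} = O(|z_1|^{2-2\beta})$. The off-diagonal block of the inverse is $-T^{-1}uE^{-1} = O(|z_1|^{2-2\beta})\,O(|z_1|^{2\beta-1}) = O(|z_1|)$, and the principal block is $E^{-1} + E^{-1}u^*T^{-1}uE^{-1} = O(1) + O(|z_1|^{2\beta}) = O(1)$. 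These are exactly the three claimed estimates.

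The two Laplacian statements then follow by contracting $(g_\varphi)^{i\bar j}$ against controlled second derivatives. For $f \in C^\infty(X)$ every $\partial^2 f/\partial z_i\partial\bar z_j$ is bounded, so $\Delta_{\omega_\varphi}f$ is a sum of terms $(g_\varphi)^{1\bar1}\,O(1)=O(|z_1|^{2-2\beta})$, $(g_\varphi)^{1\bar j}\,O(1)=O(|z_1|)$, and $(g_\varphi)^{i\bar j}\,O(1)=O(1)$; since $\beta<1$ we have $2-2\beta>0$, so every term is bounded. For $f'$ of order $|z_1|^{2\beta}$, differentiating its power-series expansion (of the form (\ref{expnsofbtauinz})) yields $\partial^2 f'/\partial z_1\partial\bar z_1 = O(|z_1|^{2\beta-2})$, $\partial^2 f'/\partial z_1\partial\bar z_j = O(|z_1|^{2\beta-1})$, and $\partial^2 f'/\partial z_i\partial\bar z_j = O(|z_1|^{2\beta})$ for $i,j\neq 1$; contracting, the $(1,\bar1)$ term contributes $O(|z_1|^{2-2\beta})\,O(|z_1|^{2\beta-2}) = O(1)$ while every remaining term is $O(|z_1|^{2\beta})$, so $\Delta_{\omega_\varphi}f' = O(1)+O(|z_1|^{2\beta})$, which is bounded on $X\setminus D$.

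The only step that is not purely formal is the non-degeneracy of the Schur complement $T$: one must guarantee that the correction $uE^{-1}u^*$ cannot cancel the leading behaviour of $\alpha$. This is precisely where $\beta>0$ is used, forcing the correction to be of strictly higher order $|z_1|^{4\beta-2}$, together with the strict positivity of the leading coefficient of $(g_\varphi)_{1\bar1}$ from Lemma \ref{lemestgmc}. The accompanying point to be verified rather than assumed is the uniform invertibility of $E$, which likewise rests on $\omega_M(b)>0$; I expect these two positivity inputs to be the main obstacle, everything else being a bookkeeping of exponents.
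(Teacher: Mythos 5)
Your proof is correct and is essentially the computation the paper leaves implicit: Lemma \ref{lemginvmc} is stated there without proof, as an immediate consequence of the entry estimates of Lemma \ref{lemestgmc}, and your Schur-complement inversion --- with the two positivity inputs (the leading coefficient $A'_1>0$ in (\ref{expnsofvarphibetalem}) and $\omega_M(b)>0$) made explicit --- is the standard way to justify it. The order-counting for the two Laplacian statements likewise matches what the paper intends.
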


We now prove the following estimates on the Ricci curvature and the scalar curvature of $\omega_{\varphi}$ around the $\infty$-section, i.e. when $\tau \to b$. 

\begin{lemma} \label{lemricmc}
Choosing a local coordinate system $(z_1 , \dots , z_{n})$ on $\mathbb{X}$ so that $ z_1$ is the fibrewise coordinate which locally defines the $\infty$-section $D$ by $z_1 =0$ and that $(z_2 , \dots , z_{n})$ defines a local coordinate system on the base $M$, we have, around $D$,
\begin{enumerate}
\item $\textup{Ric} (\omega_{\varphi})_{1 \bar{1}} = O(1) + O(|z_1|^{2 \beta -2})$, 
\item $\textup{Ric} (\omega_{\varphi})_{1 \bar{j}} = O(1) + O(|z_1|^{2 \beta -1})$ ($j \neq 1$), 
\item $\textup{Ric} (\omega_{\varphi})_{i \bar{j}} = O(1) + O(|z_1|^{2 \beta })$ ($i,j \neq 1$),
\end{enumerate}
for a momentum-constructed metric $\omega_{\varphi}$ with smooth $\varphi$ and $\varphi '(b) = -2 \beta$. In particular, combined with Lemma \ref{lemginvmc}, we see that $S(\omega_{\varphi})$ is bounded on $\mathbb{X} \setminus D$ if $0 < \beta <1$.
\end{lemma}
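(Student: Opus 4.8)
The plan is to compute the Ricci curvature through $\textup{Ric}(\omega_{\varphi})_{i \bar{j}} = - \partial_i \partial_{\bar{j}} \log \det (g_{\varphi})$, so the first task is to pin down the asymptotics of $\det(g_{\varphi})$ near $D$. Since $\frac{1}{\varphi} d\tau \wedge d^c \tau$ is a form of rank one (its square vanishes) and $p^* \omega_M(\tau)$ is horizontal of rank $n-1$, the $n$-th power collapses to $\omega_{\varphi}^n = \frac{n}{\varphi} (p^* \omega_M(\tau))^{n-1} \wedge d\tau \wedge d^c \tau$; wedging the top horizontal form $(p^* \omega_M(\tau))^{n-1}$ against $d\tau \wedge d^c\tau$ annihilates every component of $\partial \tau$ and $\bar{\partial}\tau$ except the fibre one, leaving, in the coordinates $(z_1, \dots, z_n)$ of Lemma \ref{lemestgmc},
\[ \det(g_{\varphi}) = \frac{2}{\varphi}\, \det(g_M(\tau))\, |\partial_1 \tau|^2 , \]
where $g_M(\tau)$ is the matrix of $\omega_M(\tau)$ on $M$.

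Next I would feed in the expansions of Lemma \ref{lemestgmc}. From $b - \tau = A_0 |z_1|^{2\beta}(1 + \sum_{i \ge 1} A_i |z_1|^{2\beta i})$ one reads off $\partial_1 \tau = - A_0 \beta |z_1|^{2\beta - 2} \bar{z}_1 B_1$, hence $|\partial_1 \tau|^2 = A_0^2 \beta^2 B_1^2 |z_1|^{4\beta - 2}$ with $B_1 = 1 + O(|z_1|^{2\beta})$; meanwhile $\varphi = 2 \beta A'_1 |z_1|^{2\beta}(1 + O(|z_1|^{2\beta}))$ by (\ref{expnsofvarphibetalem}); and $\det(g_M(\tau))$, being a polynomial in $\tau$ with base-smooth coefficients evaluated at $\tau = b - O(|z_1|^{2\beta})$, equals $\det(g_M(b)) + O(|z_1|^{2\beta})$ with $\det(g_M(b)) > 0$. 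Each of the three factors is thus a positive base-smooth function times a locally uniformly convergent power series in $|z_1|^{2\beta}$ with base-smooth coefficients, so that
\[ \log \det(g_{\varphi}) = (2\beta - 2) \log|z_1| + \psi_0(z_2, \dots, z_n) + G , \]
where $\psi_0$ is smooth on the base and $G = \sum_{k \ge 1} c_k(z_2, \dots, z_n) |z_1|^{2\beta k}$ is again a power series in $|z_1|^{2\beta}$ starting at order $|z_1|^{2\beta}$.

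Now I would apply $- \partial_i \partial_{\bar{j}}$ term by term. The pole $(2\beta - 2)\log|z_1|$ is pluriharmonic on $X \setminus D$, so it drops out (this is exactly the term responsible for the distributional contribution in Theorem \ref{scaldistmc}, but it gives nothing to the smooth curvature off $D$); the base term $\psi_0$ contributes only $O(1)$ in the block $i,j \neq 1$ and nothing to the rows or columns carrying the index $1$. For $G$, differentiating a typical summand gives $\partial_1 \partial_{\bar{1}}(c_k |z_1|^{2\beta k}) = (\beta k)^2 c_k |z_1|^{2\beta k - 2}$, $\partial_1 \partial_{\bar{j}}(c_k |z_1|^{2\beta k}) = O(|z_1|^{2\beta k - 1})$ for $j \neq 1$, and $\partial_i \partial_{\bar{j}}(c_k |z_1|^{2\beta k}) = O(|z_1|^{2\beta k})$ for $i,j \neq 1$; the series may be differentiated term by term by local uniform convergence, the $k=1$ terms dominate, and combining with the $O(1)$ pieces yields precisely the three claimed estimates.

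Finally, the boundedness of $S(\omega_{\varphi})$ follows by writing $S(\omega_{\varphi}) = \sum_{i,j} (g_{\varphi})^{i \bar{j}} \textup{Ric}(\omega_{\varphi})_{i \bar{j}}$ and pairing the Ricci estimates against the inverse-metric estimates of Lemma \ref{lemginvmc}: the $(1\bar{1})$ contribution is $O(|z_1|^{2 - 2\beta}) \cdot (O(1) + O(|z_1|^{2\beta - 2})) = O(1)$, the mixed $(1\bar{j})$ contribution is $O(|z_1|) \cdot (O(1) + O(|z_1|^{2\beta - 1})) = O(1)$, and the $(i\bar{j})$ block with $i,j \neq 1$ is $O(1) \cdot (O(1) + O(|z_1|^{2\beta})) = O(1)$, all for $0 < \beta < 1$. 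The step I expect to require the most care is the bookkeeping in the determinant computation, namely verifying that after substituting the $|z_1|^{2\beta}$-expansions, $\log\det(g_{\varphi})$ has \emph{exactly} the shape ``$\log$-pole $+$ base-smooth $+$ power series in $|z_1|^{2\beta}$'' with no stray terms; once this structural statement is secured, the differentiation and order-counting are routine. It is precisely the real-analyticity of $\varphi$, via Lemma \ref{lemestgmc}, that makes this clean power-series structure available.
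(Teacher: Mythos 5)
Your proposal is correct and follows essentially the same route as the paper: expand the volume form using $\omega_{\varphi}^n = \frac{n}{\varphi}\, p^*\omega_M(\tau)^{n-1}\wedge d\tau\wedge d^c\tau$ together with the power-series expansions of Lemma \ref{lemestgmc}, observe that $\log$ of the result is a $\log|z_1|^2$-pole (pluriharmonic off $D$) plus a smooth term plus a convergent power series in $|z_1|^{2\beta}$ with base-smooth coefficients, and differentiate term by term; the only cosmetic difference is that you work with $\det(g_{\varphi})$ directly while the paper divides by a reference volume form $\omega_0^n$, which changes the Ricci potential only by a smooth function.
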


\begin{proof}
First note that (cf.~Lemma \ref {lemestgmc}, the equation (\ref{defofomegaphimc}), and \cite[p2296]{hs}) $\omega_{\varphi}^{n} = \frac{n}{\varphi} p^* \omega_M (\tau)^{n-1} \wedge d \tau \wedge d^c \tau$ is of order
\begin{equation*} 
\omega_{\varphi}^{n}  = |z_1|^{2 \beta-2} F p^* \omega_M(\tau)^{n-1} \wedge \ai d z_1  \wedge d \bar{z}_1,
\end{equation*}
where $F$ stands for some locally uniformly convergent power series in $|z_1|^{2 \beta}$ that is bounded from above and away from 0 on $\mathbb{X} \setminus D$ (this follows from Lemma \ref{lemestgmc}).


Writing $\omega_0 := p^* \omega_M + \delta \omega_{FS}$ for a reference \kah form on $\mathbb{X} = \prj(\mathcal{F} \oplus \cx)$, where $\omega_{FS}$ is a fibrewise Fubini-Study metric and $\delta>0$ is chosen to be small enough so that $\omega_0 >0$, we thus have
\begin{equation*} 
\frac{\omega_{\varphi}^{n}}{\omega_0^{n}} = \frac{p^* \omega_M (\tau)^{n-1}}{p^* \omega_M^{n-1}} |z_1|^{2 \beta -2} F'
\end{equation*}
with another locally uniformly convergent power series $F'$ in $|z_1|^{2 \beta}$ on $\mathbb{X} \setminus D$, which is bounded from above and away from 0 (note also that the derivatives of $F'$ in the $z_1$-direction are not necessarily bounded on $\mathbb{X} \setminus D$ due to the dependence on $|z_1|^{2 \beta}$; they may have a pole of fractional order along $D$). Recalling (\ref{defofomegamtau}), we see that $p^* \omega_M (\tau)^{n-1} / \omega_M^{n-1}$ depends polynomially on $\tau$. We thus have a locally uniformly convergent power series
\begin{equation} \label{volfmc}
\frac{\omega_{\varphi}^{n}}{\omega_0^{n}} =  |z_1|^{2 \beta -2} \left( F_0 + \sum_{j=1}^{\infty} F_j |z_1|^{2 \beta j} \right)
\end{equation}
with some smooth functions $F_j$ depending only on the coordinates $(z_2 , \dots , z_n)$ on $M$, where $F_0$ is also bounded away from 0.

Choosing a local coordinate system $(z_1 , \dots , z_{n})$ on $\mathbb{X}$ so that $D = \{ z_1 =0 \}$ and that $(z_2 , \dots , z_{n})$ defines a local coordinate system on the base $M$, we evaluate the order of each component of the Ricci curvature $\textup{Ric} (\omega_{\varphi}) = - \ai \ddbar \log \left( \frac{\omega_{\varphi}^{n}}{\omega_0^{n}} \right)$ around the $\infty$-section, i.e. as $\tau \to b$. Writing $\textup{Ric} (\omega_{\varphi})_{i \bar{j}} = - \frac{\partial^2}{\partial z_i \partial \bar{z}_j} \log \left( \frac{\omega_{\varphi}^{n}}{\omega_0^{n}} \right)$ and noting $\frac{\partial^2}{\partial z_i \partial \bar{z}_j} \log |z_1|^2=0$ on $X \setminus D$ for all $i,j$, we see that $\textup{Ric} (\omega_{\varphi})_{1 \bar{1}} = O(1) + O(|z_1|^{2 \beta -2})$, $\textup{Ric} (\omega_{\varphi})_{1 \bar{j}} = O(1) + O(|z_1|^{2 \beta -1})$ ($j \neq 1$), and $\textup{Ric} (\omega_{\varphi})_{i \bar{j}} = O(1) + O(|z_1|^{2 \beta })$ ($i,j \neq 1$). In particular, we see that $S(\omega_{\varphi})$ is bounded if $0 < \beta <1$.

\end{proof}

\subsection{Proof of Theorem \ref{mainthmmc}} \label{mainlfmcpf}

\subsubsection{Construction of conically singular cscK metrics on $\mathbb{X}= \prj (\mathcal{F} \oplus \cx)$} \label{construct}


We start from recalling the materials in \S 3.2 of \cite{hwang}, particularly \cite[Propositions 3.1 and 3.2]{hwang}. We first define a function
\begin{equation} \label{defofphiextmcmc}
\phi (\tau) := \frac{1}{Q (\tau)} \left( 2 (\tau+b) Q(-b) -2 \int_{-b}^\tau (\sigma_0 + \lambda x - R(x))(\tau-x) Q(x) dx \right)
\end{equation}
where $Q (\tau)$, $R(\tau)$ are defined as in (\ref{defofq}) and (\ref{defofr}). These being functions of $\tau$ follows from $\sigma$-constancy (Definition \ref{defsigmaconst}). We re-write this as
\begin{equation} \label{equalphiqr}
(\phi Q)(\tau) = 2 (\tau+b) Q(-b) -2 \int_{-b}^\tau (\sigma_0 + \lambda x - R(x))(\tau-x) Q(x) dx ,
\end{equation}
and differentiate both sides of (\ref{equalphiqr}) twice, to get
\begin{equation} \label{difftwequalphiqr}
R (\tau) - \frac{1}{2Q} \frac{\partial^2}{\partial \tau^2}(\phi Q) (\tau) = \sigma_0 + \lambda \tau .
\end{equation}
We can show, as in \cite[Proposition 3.1]{hwang}, that there exist constants $\sigma_0$ and $\lambda$ such that $\phi$ satisfies $\phi (\pm b)=0$, $\phi ' (\pm b) = \mp 2$, and $\phi (\tau) >0$ if $\tau \in (-b,b)$; namely that $\phi$ defines a smooth momentum-constructed metric $\omega_{\phi}$. We thus have $S(\omega_{\phi}) =  \sigma_0 + \lambda \tau$, by recalling (\ref{scalmomconsmc}) and (\ref{difftwequalphiqr}), so that $\omega_{\phi}$ is extremal. 

Roughly speaking, our strategy is to ``brutally substitute $\lambda=0$'' in the above to get a cscK metric with cone singularities along the $\infty$-section. More precisely, we aim to solve the equation
\begin{equation} \label{equalphiqrpsi}
R (\tau) - \frac{1}{2Q} \frac{\partial^2}{\partial \tau^2}(\varphi Q) (\tau) = \sigma'_0 
\end{equation}
with some constant $\sigma'_0$, for a profile $\varphi$ that is strictly positive on the interior $(-b,b)$ of $I$ with boundary conditions $\varphi (b) = \varphi (-b) =0$ and $\varphi' (-b)=-2$. The value $\varphi' (b)$ has more to do with the cone singularities of the metric $\omega_{\varphi}$, and we shall see at the end that the metric $\omega_{\varphi}$ associated to such $\varphi$ defines a \kah metric with cone singularities along the $\infty$-section with cone angle $-  \pi \varphi'(b) = 2 \pi \beta$.

Since
\begin{equation*}
\varphi (\tau) := \frac{1}{Q (\tau)} \left( 2 (\tau+b) Q(-b) -2 \int_{-b}^\tau (\sigma'_0  - R(x))(\tau-x) Q(x) dx \right)
\end{equation*}
certainly satisfies the equation (\ref{equalphiqrpsi}), we are reduced to checking the boundary conditions at $\partial I$ and the positivity of $\varphi$ on the interior of $I$. Note first that the equality
\begin{equation} \label{equalphiqrpsisolq}
(\varphi Q) (\tau) = 2 (\tau+b) Q(-b) -2 \int_{-b}^\tau (\sigma'_0  - R(x))(\tau-x) Q(x) dx 
\end{equation}
immediately implies that $\varphi (-b)=0$ and $\varphi '(-b) =2$ are always satisfied. Imposing $\varphi (b)=0$, we get
\begin{equation} \label{bcond}
0=2bQ(-b) - \int_{-b}^b (\sigma'_0  - R(x)) (b-x) Q(x) dx 
\end{equation}
from (\ref{equalphiqrpsisolq}), which in turn determines $\sigma'_0$. Differentiating both sides of (\ref{equalphiqrpsisolq}) and evaluating at $b$, we also get
\begin{equation} \label{bbcond}
\varphi '(b) Q(b) = 2 Q(-b) -2 \int_{-b}^b (\sigma'_0  -R(x))Q(x) dx .
\end{equation}
Writing $A:= \int_{-b}^b Q(x)dx$ and $B:= \int_{-b}^b xQ(x)dx$ we can re-write (\ref{bcond}), (\ref{bbcond}) as
\begin{equation} \label{lambdasigma}
\begin{pmatrix}
A \sigma'_0 \\
B \sigma'_0
\end{pmatrix}
=
\begin{pmatrix}
Q(-b) - \varphi' (b) Q(b) /2 + \int_{-b}^b R(x) Q(x) dx \\
-b Q(-b) - b \varphi' (b) Q(b) /2 + \int_{-b}^bx  R(x) Q(x) dx
\end{pmatrix} ,
\end{equation}
which can be regarded as an analogue of the equations (26) and (27) in \cite{hwang}. The consistency condition $B (A \sigma'_0) = A (B \sigma'_0)$ gives an equation for $\varphi '(b)$, which can be written as
\begin{equation}
- \frac{\varphi '(b)}{2} =\frac{Q(-b)(bA +B) - A \int_{-b}^b x R(x) Q(x)dx +B \int_{-b}^b R(x)Q(x)dx}{Q(b)(bA-B)}. \label{formcangle}
\end{equation}
%

Summarising the above argument, we have now obtained a profile function $\varphi$ which solves  (\ref{equalphiqrpsi}) with boundary conditions $\varphi (b) = \varphi (-b) =0$, $\varphi' (-b)=-2$, and $\varphi' (b)$ as specified by (\ref{formcangle}). Now, Hwang's argument \cite[Appendix A]{hwang} applies word by word to show that $\varphi$ is strictly positive on the interior of $I$, and hence it now remains to show that the \kah metric $\omega_{\varphi}$ has cone singularities along the $\infty$-section. Since $Q(\tau)$ is a polynomial in $\tau$ and $R(\tau)$ is a rational function in $\tau$ (with no poles when $\tau \in [-b,b]$), we see from (\ref{equalphiqrpsi}) that $\varphi$ is real analytic on $I = [-b,b]$ by the standard ODE theory. Thus the value $-  \pi \varphi'(b) = 2 \pi \beta$ is the angle of the cone singularities that $\omega_{\varphi}$ develops along the $\infty$-section of $\mathbb{X} = \prj (\mathcal{F} \oplus \cx)$, by Lemma  \ref{lemestgmc}. This completes the construction of the momentum-constructed conically singular metric $\omega_{\varphi}$, with cone angle $-  \pi \varphi'(b) = 2 \pi \beta$ as specified by (\ref{formcangle}).

We also see $\varphi' (b) \le 0$ since otherwise $\varphi ' (-b) > 0$, $\varphi' (b) > 0$, and $\varphi (\pm b)=0$ imply that $\varphi$ has to have a zero in $(-b,b)$, contradicting the positivity $\varphi >0$ on $(-b,b)$. Hence $\beta \ge 0$.

Finally, we identify the \kah class $[\omega_{\varphi}] \in H^2 (\mathbb{X} , \rl)$ of the momentum-constructed conically singular cscK metric $\omega_{\varphi}$. We first show that the restriction $\omega_{\varphi} |_{\text{fibre}}$  of $\omega_{\varphi}$ to each fibre has (fibrewise) volume $4 \pi b$. This is well-known when the metric is smooth, but we reproduce the proof here to demonstrate that the same argument works even when $\omega_{\varphi}$ has cone singularities. Related discussions can also be found in \S \ref{iotlficmccsmss} (see Lemma \ref{summcmcssmkcviavhi} in particular).

\begin{lemma} \emph{(\cite[\S 4]{hwang} or \cite[\S 2.1]{hs})} \label{lemfbwv4pbps}
Suppose that $\omega_{\varphi}$ is a (possibly conically singular) momentum-constructed metric with the momentum profile $\varphi : [-b , b] \to \rl_{\ge 0}$. Then the fibrewise volume of $\omega_{\varphi}$ is given by $4 \pi b$.
\end{lemma}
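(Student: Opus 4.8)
The plan is to reduce the computation to a single fibre and then pass to action--angle coordinates, in which the restriction of $\omega_\varphi$ becomes the standard area form $d\tau \wedge d\theta$ on the momentum interval. First I would fix a point $m \in M$ and let $F \cong \prj^1$ be the fibre of $X = \prj(\mathcal{F}\oplus\cx)$ over $m$. Since $p^*\omega_M$ and $p^*\gamma$ are pullbacks of forms on $M$, their restrictions to $F$ vanish, so (\ref{defofomegaphimc}) gives $\omega_\varphi|_F = \frac{1}{\varphi} d\tau \wedge d^c\tau |_F$, and the fibrewise volume is $\int_F \omega_\varphi = \int_F \frac{1}{\varphi} d\tau \wedge d^c\tau$.

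Next I would introduce on $F$ the fibrewise log-radial coordinate $t = \frac{1}{2}\log h_{\mathcal{F}}(\zeta,\zeta)$ and the angular coordinate $\theta$, where $\zeta$ is the fibrewise holomorphic coordinate and $\frac{\partial}{\partial\theta}$ is normalised by $\exp(2\pi \frac{\partial}{\partial\theta}) = 1$. A direct computation gives $d^c t = d\theta$ along $F$; combined with the relation $dt = d\tau/\varphi$ of (\ref{dtdtauphi}), which says that $\tau$ is a function of $t$ alone on each fibre, this yields $d^c\tau|_F = \varphi\, d\theta$. Hence $\omega_\varphi|_F = \frac{1}{\varphi} d\tau \wedge (\varphi\, d\theta) = d\tau \wedge d\theta$, so that $(\tau,\theta)$ serve as action--angle coordinates on $F$ minus its two poles. (Equivalently, one verifies directly that $\iota_{\partial/\partial\theta}\omega_\varphi = -d\tau$, re-deriving that $\tau$ is the moment map for $\frac{\partial}{\partial\theta}$.)

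With $\omega_\varphi|_F = d\tau\wedge d\theta$ the integral is immediate: as $\tau$ sweeps $[-b,b]$ and $\theta$ sweeps $[0,2\pi)$,
\begin{equation*}
\int_F \omega_\varphi = \int_{-b}^{b}\!\int_0^{2\pi} d\theta\, d\tau = 2\pi \cdot 2b = 4\pi b .
\end{equation*}

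It remains to argue that the cone singularities do not affect this computation, which is the whole point of reproving the statement here. The conically singular case differs from the smooth case of \cite{hwang, hs} only through the boundary value $\varphi'(b) = -2\beta$ in place of $-2$; this governs the smoothness of $\omega_\varphi$ transverse to the $\infty$-section but plays no role in the action--angle description above. In the coordinates $(\tau,\theta)$ the integrand $d\tau\wedge d\theta$ is simply Lebesgue measure on $[-b,b]\times[0,2\pi)$, while the two poles of $F$ (where the fibre closes up, including the cone point at $\tau = b$) form a set of measure zero near which $\omega_\varphi$ is integrable by Remark \ref{remvolcs}. The step I expect to require the most care is exactly the legitimacy of this reduction up to the singular boundary $\tau = b$: one must check that $d^c\tau|_F = \varphi\,d\theta$ persists as $\tau \to b$ (equivalently as $z_1 \to 0$, using the locally uniformly convergent expansions of Lemma \ref{lemestgmc}) and that no boundary term survives in the limit. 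Granting this, the identical computation yields $4\pi b$ independently of the cone angle $2\pi\beta$.
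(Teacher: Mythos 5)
Your proposal is correct and follows essentially the same route as the paper: both reduce to a single fibre, use the relation $dt = d\tau/\varphi$ from (\ref{dtdtauphi}) to rewrite $\frac{1}{\varphi}\, d\tau \wedge d^c\tau$ as $d\tau \wedge d\theta$ (the paper phrases this as $\frac{d\tau}{dr}\, dr \wedge d\theta$ with $t = \log r$, which is the same form), and integrate $\tau$ over $[-b,b]$ and $\theta$ over $[0,2\pi)$ to get $4\pi b$. Your extra care about the cone point at $\tau = b$ is sound but not more than the paper needs, since the form is integrable and the two poles have measure zero.
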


\begin{proof}
The equation (\ref{dtdtauphi}) means that the restriction of $\omega_{\varphi}$ at each fibre (which is isomorphic to $\prj^1$) is given by (cf.~equation (2.5) in \cite{hs})
\begin{equation*}
\omega_{\varphi} |_{\text{fibre}} = \frac{1}{2} \varphi (\tau) |\zeta|^{-2} \ai d \zeta \wedge d \bar{\zeta} =\varphi (\tau) r^{-2} rdr \wedge d \theta
\end{equation*}
where $\zeta = r e^{ \ai \theta}$ is a holomorphic coordinate on each fibre ($| \cdot |$ denotes the fibrewise Euclidean norm defined by $h_{\mathcal{F}}$; see \cite[\S 2.1]{hs} for more details). By using (\ref{dtdtauphi}), we can re-write this as
\begin{equation} \label{omvpmcfbwvol}
\omega_{\varphi} |_{\text{fibre}} =  \frac{d \tau}{d t} r^{-1} dr \wedge d \theta = \frac{d \tau}{d r}  dr \wedge d \theta
\end{equation}
since $t = \log r$. Integrating this over the fibre, we get
\begin{equation*}
\int_{\text{fibre}} \omega_{\varphi} = 2 \pi \int_{0}^{\infty} \frac{d \tau}{d r}  dr =  2 \pi \int_{-b}^b d \tau = 4 \pi b
\end{equation*}
since $\tau =b$ corresponds to $\infty \in \prj^1$ and $\tau = -b$ to $0 \in \prj^1$.
\end{proof}

Thus we can write $[\omega_{\varphi}] = \sum_{i=1}^r \alpha_i p^* [ p_i^* \omega_i ]+ 4 \pi b c_1 (\xi)$ for some $\alpha_i >0$, in the notation used in Remark \ref{kclmcmcvol}. Since the same proof applies to the smooth metric $\omega_{\phi}$, we also have $[\omega_{\phi}] = \sum_{i=1}^r \tilde{\alpha_i} p^* [ p_i^* \omega_i ] + 4 \pi b c_1 (\xi)$ for some $\tilde{\alpha}_i >0$. On the other hand, since $\omega_{\varphi} |_{M} = \omega_M(b) = \omega_{\phi} |_{M}$ (where $M$ is identified with the $0$-section), it immediately follows that $\alpha_i = \tilde{\alpha}_i$ for all $i$, i.e. $[ \omega_{\varphi} ] = [\omega_{\phi}]$.

\subsubsection{Computation of the log Futaki invariant} \label{logfutmc}


We again take the (smooth) momentum-constructed extremal metric $\omega_{\phi}$, with $\phi$ defined as in (\ref{defofphiextmcmc}), and write $S(\omega_{\phi}) = \sigma_0 + \lambda \tau$ for its scalar curvature. 


Recall that the generator $v_f$ of the fibrewise $U(1)$-action has $a \tau$ as its Hamiltonian function with respect to $\omega_{\phi}$ (cf.~\cite[\S 2.1]{hs}), with some $a \in \rl$ up to an additive constant which does not change $v_f$. This means that $a \tau$ (up to an additive constant) is the holomorphy potential for the holomorphic vector field $\Xi_f := v^{1,0}_f$ (cf. Remark \ref{rem11corbauth0}) which generates the complexification of the fibrewise $U(1)$-action, i.e. the fibrewise $\cx^*$-action. Thus we can take $f  = a (\tau - \bar{\tau})$, with $\bar{\tau}$ being the average of $\tau$ over $\mathbb{X}$ with respect to $\omega_{\phi}$, for the holomorphy potential $f$ in the formula (\ref{dfsd}). Then, noting that $S(\omega_{\phi}) - \bar{S} = \lambda (\tau - \bar{\tau})$, we compute the (classical) Futaki invariant as 
\begin{equation*}
\textup{Fut} (\Xi_f , [ \omega_{\phi} ]) = \int_{\mathbb{X}} a \lambda (\tau - \bar{\tau})^2 \frac{\omega_{\phi}^{n}}{n!} = 2 \pi a \lambda \textup{Vol}(M , \omega_M) \int_{-b}^b (\tau - \bar{\tau})^2 Q(\tau) d \tau
\end{equation*}
with $\textup{Vol}(M, \omega_{M}) := \int_M\frac{ \omega_M^{n-1}}{(n-1)!}$, by \cite[Lemma 2.8]{hwang}. Recalling $D = \{ \tau =b \}$, the second term in the log Futaki invariant can be obtained by computing
\begin{align*}
\int_D f \frac{\omega_{\phi}^{n-1}}{(n-1)!} = \int_D a (\tau - \bar{\tau}) \frac{\omega_{\phi}^{n-1}}{(n-1)!} &= \int_D a (b - \bar{\tau}) \frac{p^* \omega_M (b)^{n-1}}{(n-1)!} \\
&= a (b -\bar{\tau}) Q(b) \int_M \frac{ \omega_M^{n-1}}{(n-1)!} \\
&= a (b -\bar{\tau}) Q(b) \textup{Vol}(M, \omega_M)
\end{align*}
where we used
\begin{equation}
\omega_{\phi}^{n-1} = p^* \omega_M (\tau)^{n-1} + \frac{n-1}{\phi} p^* \omega_M (\tau)^{n-2} d \tau \wedge d^c \tau \label{estmcond}
\end{equation}
which was proved in \cite[p2296]{hs}, and the definitional $Q(b) = \omega_M (b)^{n-1} / \omega_M^{n-1}$ (cf. equation (\ref{dtdtauphi})). We also note the trivial equality $\int_{\mathbb{X}} f \frac{\omega_{\phi}^{n}}{n!} = \int_{\mathbb{X}} \lambda (\tau - \bar{\tau}) \frac{\omega_{\phi}^{n}}{n!} =0 $ to see that the third term of the log Futaki invariant is 0. Collecting these calculations together, the log Futaki invariant evaluated against $\Xi_f$ is given by
\begin{equation*}
\textup{Fut}_{D , \beta} (\Xi_f , [ \omega_{\phi}]) =  a \lambda \textup{Vol}(M, \omega_M) \int_{-b}^b (\tau - \bar{\tau})^2 Q(\tau) d \tau -  (1-\beta) a (b -\bar{\tau}) Q(b) \textup{Vol}(M , \omega_M) .
\end{equation*}
Thus, writing $A := \int_{-b}^b Q(\tau)d\tau$, $B := \int_{-b}^b \tau Q(\tau)d\tau$, and $C := \int_{-b}^b \tau^2 Q(\tau)d\tau$ and noting $\bar{\tau} = B/A$, setting $\textup{Fut}_{D , \beta} (\Xi_f , [\omega_{\phi}])=0$ gives an equation for the cone angle $\beta$ as 
\begin{align}
\beta &= 1- \frac{ \lambda  \int_{-b}^b (\tau - \bar{\tau})^2 Q(\tau) d \tau}{ (b -\bar{\tau}) Q(b) } \notag \\
&= \frac{Q(b)(bA-B) -\lambda \left( AC-B^2 \right)}{Q(b)(bA-B)}  \label{lfutcangledeninv}
\end{align}
Applying (\ref{bcond}) and (\ref{bbcond}) to the case of smooth extremal metric $\omega_{\phi}$, i.e. with $\phi'(b) = -2$, we get the equations (26) and (27) in \cite{hwang} which can be re-written as
\begin{equation*} 
\begin{pmatrix}
A & B \\
B & C
\end{pmatrix}
\begin{pmatrix}
\sigma_0 \\
\lambda
\end{pmatrix}
=
\begin{pmatrix}
Q(-b) + Q(b)  + \int_{-b}^b R(x) Q(x) dx \\
-b Q(-b) + b  Q(b)  + \int_{-b}^bx  R(x) Q(x) dx
\end{pmatrix} ,
\end{equation*}
and hence, noting $AC-B^2 >0$ by Cauchy--Schwarz (where we regard $Q(\tau)d \tau$ as a measure on $I = [-b,b]$), we get $\lambda$ as
\begin{align*}
	(AC-B^2)^{-1} &\left[ -B \left( Q(-b)+ Q(b) + \int_{-b}^b R(x)Q(x)dx \right) \right. \\
	&\left. + A \left( -b Q(-b) +bQ(b) +\int_{-b}^b x R(x)Q(x)dx \right) \right],
\end{align*}
and hence
\begin{align}
\beta &= \frac{Q(b)(bA-B) -\lambda \left( AC-B^2 \right)}{Q(b)(bA-B)} \notag \\
&= \frac{Q(-b)(bA+B)  + B \int_{-b}^b R(x)Q(x)dx  - A \int_{-b}^b x R(x)Q(x)dx }{Q(b)(bA-B)} \label{futformcangle}
\end{align}
which agrees with (\ref{formcangle}). This is precisely what was claimed in Theorem \ref{mainthmmc}.

\begin{remark}
In fact, in the above proof we did not need the \kah class $[ \omega_{\varphi} ]$ to be rational, since the log Futaki invariant can be defined for a nonrational \kah class as well. It seems interesting to speculate connections to the recent works on nonrational \kah classes \cite{dr16,sd16}; see also \cite[\S 4.4]{rt}. 
\end{remark}

\section{Futaki invariant computed with respect to the conically singular metrics} \label{scalmeas}

\subsection{Some estimates for the conically singular metrics of elementary form} \label{eleestcstf}

We now consider conically singular metrics of elementary form $\hat{\omega} = \omega+ \lambda \ai \ddbar |s|_h^{2 \beta}$, as defined in Definition \ref{csmefmcdef}. We collect here some estimates that we need later.


\begin{remark}
What we discuss in here is just a review of well-known results, and in fact for the most part, they are contained in \S 2 of the paper of Jeffres--Mazzeo--Rubinstein \cite{jmr} or \S 3 of the paper of Brendle \cite{bre}.
\end{remark}

Pick a local coordinate system $(z_1 , \dots , z_n)$ around a point in $X$ so that $D$ is locally given by $\{ z_1 = 0 \}$. We then write
\begin{equation*}
\hat{\omega} = \sum_{i,j} \hat{g}_{\ijbar} \ai d z_i \wedge d \bar{z}_j  = \sum_{i,j} g_{\ijbar} \ai d z_i \wedge d \bar{z}_j + \lambda \sum_{i,j} \frac{\partial^2 |s|_h^{2 \beta}}{\partial z_i \partial \bar{z}_j}  \ai d z_i \wedge d \bar{z}_j
\end{equation*}
which means
\begin{equation*}
(\hat{g}_{\ijbar})_{\ijbar}
=
\begin{pmatrix}
g_{1 \bar{1}} + O(|z_1|^{2 \beta -2}) & g_{1 \bar{2}} + O(|z_1|^{2 \beta -1}) & \hdots &  g_{1 \bar{n}} + O(|z_1|^{2 \beta -1}) \\
g_{2 \bar{1}} + O(|z_1|^{2 \beta -1}) & g_{2 \bar{2}} + O(|z_1|^{2 \beta}) & \hdots &  g_{2 \bar{n}} + O(|z_1|^{2 \beta}) \\
\vdots & \vdots & \ddots & \vdots \\
g_{n \bar{1}} + O(|z_1|^{2 \beta -1}) & g_{n \bar{2}} + O(|z_1|^{2 \beta}) & \hdots &  g_{n \bar{n}} + O(|z_1|^{2 \beta}) 
\end{pmatrix} .
\end{equation*}
Thus, writing $\hat{g}$ for the metric corresponding to $\hat{\omega}$, we have (cf.~Definition \ref{defcsmcgen})
\begin{enumerate}
\item $\hat{g}_{1 \bar{1}} = O(|z_1|^{2 \beta -2})$ ,
\item $\hat{g}_{1 \bar{j}} = O(|z_1|^{2 \beta -1})$ if $j \neq 1$ ,
\item $\hat{g}_{i \bar{j}} = O(1)$ if $i, j \neq 1$.
\end{enumerate}
The above also means that the volume form $\hat{\omega}^n$ can be estimated as (cf.~\cite[p10]{bre})
\begin{equation*}
\hat{\omega}^n = \left( |z_1|^{2 \beta -2} \sum_{j=0}^{n-1} a_j |z_1|^{2 \beta j} + \sum_{j=0}^n b_j |z_1|^{2 \beta j} \right) \omega_0^n
\end{equation*}
where $\omega_0$ is a smooth reference \kah form on $X$, $a_j$'s and $b_j$'s being smooth functions on $X$, and $a_0$ is also strictly positive. Thus we immediately have the following lemma.
\begin{lemma} \label{lemvoltyp}
We may write $\hat{\omega}^n = |z_1|^{2  - 2 \beta} \alpha$ with some $(n,n)$-form $\alpha$, which is smooth on $X \setminus D$ and bounded as we approach $D = \{ z_1 =0\} $, but whose derivatives (in $z_1$-direction) may not be bounded around $D$ due to the dependence on the fractional power $|z_1|^{2 \beta}$.
\end{lemma}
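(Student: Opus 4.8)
The plan is to read the claimed factorisation off the local expansion of the volume form recorded immediately above the lemma, whose derivation is the one genuinely substantive step. First I would fix a coordinate chart $(z_1,\dots,z_n)$ with $D=\{z_1=0\}$ and write $|s|^2_h=|z_1|^2 e^{-\psi}$ for a smooth local function $\psi$, so that $|s|^{2\beta}_h=|z_1|^{2\beta}e^{-\beta\psi}$. The crucial observation is that $e^{-\beta\psi}$ is an honest smooth function of $(z_1,\dots,z_n)$, while the only singular factor is the entire fractional power $|z_1|^{2\beta}=(z_1\bar z_1)^{\beta}$; hence each entry $\partial^2|s|^{2\beta}_h/\partial z_i\partial\bar z_j$ of $\lambda\,\ai\ddbar|s|^{2\beta}_h$ is a finite sum of (smooth function)$\,\times\,|z_1|^{2\beta}\,\times\,$(explicit negative power of $z_1,\bar z_1$), producing the block orders $O(|z_1|^{2\beta-2})$, $O(|z_1|^{2\beta-1})$, $O(|z_1|^{2\beta})$ already displayed for $(\hat g_{i\bar j})$, with the $(1,\bar 1)$ entry strictly positive of exact order $|z_1|^{2\beta-2}$ by condition (1) of Definition \ref{defcsmcgen}.

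Second, I would expand $\det(\hat g_{i\bar j})$ by cofactors along the first row and column (equivalently, use the Schur-complement form $\det\hat g=\det M\,(p-q^{*}M^{-1}q)$ with $M$ the bounded, positive-definite lower-right $(n-1)\times(n-1)$ block). Since every entry is a smooth function of the base coordinates $(z_2,\dots,z_n)$ times a locally uniformly convergent power series in the single real variable $|z_1|^{2\beta}$, carrying in addition the explicit prefactors above, multiplying out and collecting powers of $|z_1|^{2\beta}$ reproduces exactly the stated expansion $\hat\omega^n=\left(|z_1|^{2\beta-2}\sum_{j=0}^{n-1}a_j|z_1|^{2\beta j}+\sum_{j=0}^{n}b_j|z_1|^{2\beta j}\right)\omega_0^n$ with $a_j,b_j$ smooth on $X$ and $a_0>0$, the positivity coming from $\hat g_{1\bar1}\sim F|z_1|^{2\beta-2}$ ($F>0$) together with $\det M$ bounded away from zero. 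Convergence is inherited throughout from the convergent expansion of $|s|^{2\beta}_h$, so no analytic subtlety beyond bookkeeping enters here.

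Third, the factorisation of the lemma follows by pulling the explicit fractional power of $|z_1|$ out of this expansion: writing $\hat\omega^n=|z_1|^{2-2\beta}\alpha$ defines $\alpha$ as the $(n,n)$-form collecting the remaining series, and the expansion shows $\alpha$ is smooth on $X\setminus D$ and extends across $D$ as a bounded form, bounded away from zero by $a_0>0$. For the final assertion I would note that $\alpha$ still contains the factors $|z_1|^{2\beta j}$ with $j\ge 1$, whose $z_1$-derivative is of order $|z_1|^{2\beta-1}$ and hence need not be bounded as $z_1\to 0$; thus $\alpha$ is bounded but fails to be $C^1$ up to $D$, exactly as stated. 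The only real work lies in the second step — carrying the three distinct block orders simultaneously through the cofactor expansion and confirming that the leading coefficient $a_0$ does not degenerate — while the factorisation and the derivative statement are then immediate consequences rather than hard estimates, since everything reduces to the explicit differentiation of the single singular building block $|z_1|^{2\beta}$.
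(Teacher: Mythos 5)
Your proposal is correct and takes essentially the same route as the paper, which offers no separate proof: the lemma is read off immediately from the displayed expansion $\hat{\omega}^n = \left( |z_1|^{2\beta-2}\sum_{j=0}^{n-1} a_j |z_1|^{2\beta j} + \sum_{j=0}^{n} b_j |z_1|^{2\beta j} \right)\omega_0^n$, which is exactly what your block-order and determinant bookkeeping establishes (the paper simply asserts this expansion, citing Brendle, so your second step supplies the detail the paper leaves implicit). One caveat: the exponent $|z_1|^{2-2\beta}$ in the lemma statement, which you copy in your final step, is a sign typo for $|z_1|^{2\beta-2}$ --- pulling out $|z_1|^{2-2\beta}$ from your own expansion would give $\alpha = |z_1|^{4\beta-4}(a_0+\cdots)+|z_1|^{2\beta-2}(b_0+\cdots)$, which is unbounded, contradicting your boundedness and positivity claims; the paper itself silently corrects this when it invokes the lemma in the proofs of Theorems \ref{scaldist} and \ref{scaldistmc}, writing $\hat{\omega}^n = |z_1|^{2\beta-2}\alpha$ with $\alpha$ bounded, which is the version your argument actually proves (note also that with this factorisation the unbounded $z_1$-derivatives of $\alpha$ come both from the $|z_1|^{2\beta j}$ factors you mention and from the $|z_1|^{2-2\beta}$ prefactor of the $b_j$-series, relevant when $\beta > 1/2$).
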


We also see, analogously to Lemma \ref{lemginvmc}, that the above means that the inverse matrix $\hat{g}^{\ijbar}$ satisfies the following estimates.

\begin{lemma} \label{lemginv}
Suppose that $\hat{g}$ is a conically singular \kah metric of elementary form with cone angle $2 \pi \beta$ along $D = \{ z_1 =0 \}$. Then, around $D$,
\begin{enumerate}
\item $\hat{g}^{1 \hat{1}} = O(|z_1|^{2 -2 \beta})$ ,
\item $\hat{g}^{1 \bar{j}} = O(|z_1|)$ if $j \neq 1$ ,
\item $\hat{g}^{i \bar{j}} = O(1)$ if $i, j \neq 1$.
\end{enumerate}
Thus, $\Delta_{\hat{\omega}}f = \sum_{i,j=1}^n \hat{g}^{ i \bar{j}} \frac{\partial^2}{ \partial z_i \partial \bar{z}_j} f$ is bounded if $f$ is a smooth function on $X$. Also, if $f'$ is a smooth function on $X \setminus D$ that is of order $|z_1|^{2 \beta}$ around $D$, then $\Delta_{\hat{\omega}} f' =  O(1) + O(|z_1|^{2 \beta}) $. In particular, $\Delta_{\hat{\omega}} f'$ remains bounded on $X \setminus D$.


\end{lemma}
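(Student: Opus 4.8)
The plan is to compute the inverse matrix by a Schur-complement (block) decomposition and then to read off the Laplacian bounds by matching the orders of the factors in each summand of $\sum_{i,j}\hat g^{i\bar j}\partial_i\partial_{\bar j}$. First I would separate the distinguished index $1$ from the transverse indices $\{2,\dots,n\}$, writing $\hat g$ in block form with scalar corner $a=\hat g_{1\bar 1}$, column $b=(\hat g_{i\bar 1})_{i\ge 2}$, and transverse $(n-1)\times(n-1)$ block $C=(\hat g_{i\bar j})_{i,j\ge 2}$. The entrywise estimates recalled just above give $a=O(|z_1|^{2\beta-2})$, $b=O(|z_1|^{2\beta-1})$ and $C=O(1)$; moreover $C$ is uniformly positive definite near $D$ (its entries converge to the smooth background metric restricted to the base), so $C^{-1}=O(1)$, while $a$ obeys the \emph{two-sided} bound $a\asymp|z_1|^{2\beta-2}$ because the coefficient of $|z_1|^{2\beta-2}$ in $\hat g_{1\bar 1}$ is a strictly positive bounded function (cf.~Definition \ref{defcsmcgen} and the volume-form estimate preceding Lemma \ref{lemvoltyp}).

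Next I would invoke the Schur-complement formula. Setting $s:=a-b^*C^{-1}b$, one has $\hat g^{1\bar 1}=s^{-1}$, the mixed block equals $-s^{-1}b^*C^{-1}$, and the transverse block equals $C^{-1}+C^{-1}b\,s^{-1}\,b^*C^{-1}$. The decisive point is that the correction $b^*C^{-1}b=O(|z_1|^{4\beta-2})$ is of strictly higher order than $a\asymp|z_1|^{2\beta-2}$, precisely because $\beta>0$; hence $s\asymp|z_1|^{2\beta-2}$ and $s^{-1}=O(|z_1|^{2-2\beta})$. Substituting the orders of $b$, $C^{-1}$ and $s^{-1}$ then yields the three claimed estimates: $\hat g^{1\bar 1}=O(|z_1|^{2-2\beta})$; $\hat g^{1\bar j}=-s^{-1}(b^*C^{-1})_j=O(|z_1|^{2-2\beta})\,O(|z_1|^{2\beta-1})=O(|z_1|)$ for $j\ne 1$; and $\hat g^{i\bar j}=C^{-1}_{ij}+O(|z_1|^{2\beta})=O(1)$ for $i,j\ne 1$.

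Finally, for the Laplacian statements I would match orders summand by summand. If $f\in C^\infty(X)$ then every $\partial_i\partial_{\bar j}f=O(1)$, so the corner, mixed and transverse terms contribute $O(|z_1|^{2-2\beta})$, $O(|z_1|)$ and $O(1)$ respectively, all bounded. For $f'$ of order $|z_1|^{2\beta}$ — which I would interpret, as in Lemma \ref{lemestgmc}, as admitting a local expansion of the form $|z_1|^{2\beta}\times(\text{smooth function})$, so that $\partial_1\partial_{\bar 1}f'=O(|z_1|^{2\beta-2})$, $\partial_1\partial_{\bar j}f'=O(|z_1|^{2\beta-1})$ and $\partial_i\partial_{\bar j}f'=O(|z_1|^{2\beta})$ — the corner term gives $O(|z_1|^{2-2\beta})\,O(|z_1|^{2\beta-2})=O(1)$, while the mixed and transverse terms give $O(|z_1|^{2\beta})$, so that $\Delta_{\hat\omega}f'=O(1)+O(|z_1|^{2\beta})$, which is bounded on $X\setminus D$.

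The main obstacle, and the only genuinely delicate point, is that the entrywise $O$-bounds on $\hat g$ do not by themselves suffice: one must guarantee that the Schur complement $a-b^*C^{-1}b$ does not degenerate, i.e.~that it retains order exactly $|z_1|^{2\beta-2}$ rather than cancelling. This is exactly where both $\beta>0$ (which forces $b^*C^{-1}b$ to be subdominant) and the two-sided positivity built into Definition \ref{defcsmcgen} and the volume-form estimate (which forces $\det\hat g\asymp|z_1|^{2\beta-2}$) are essential. Everything else is bookkeeping of powers of $|z_1|$, identical to the momentum-constructed case already recorded in Lemma \ref{lemginvmc}.
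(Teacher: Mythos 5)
Your proposal is correct, and it supplies in full the computation that the paper leaves implicit: the paper merely asserts the inverse-matrix estimates "analogously to Lemma \ref{lemginvmc}" from the entrywise bounds on $\hat g_{i\bar j}$, and your Schur-complement argument is the standard way to justify exactly that step, with the key non-degeneracy point (that $b^*C^{-1}b=O(|z_1|^{4\beta-2})$ is subdominant to $a\asymp|z_1|^{2\beta-2}$ because $\beta>0$) correctly identified and handled. The order-matching for the two Laplacian statements is likewise the intended bookkeeping, so no gaps remain.
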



We now evaluate the Ricci curvature of $\hat{\omega}$. In terms of the local coordinate system $(z_1 , \dots , z_n)$ as above, we have
\begin{align*}
\text{Ric} (\hat{\omega})_{\ijbar} &= - \frac{\partial^2}{ \partial z_i \partial \bar{z}_j} \log \left( \frac{\hat{\omega}^n}{\omega_0^n} \right) \\
&= - \frac{\partial^2}{ \partial z_i \partial \bar{z}_j} \log \left( |z_1|^{2 \beta -2} \sum_{j=0}^{n-1} a_j |z_1|^{2 \beta j} + \sum_{j=0}^n b_j |z_1|^{2 \beta j} \right)   .
\end{align*}

Since $\ddbar \log |z_1|^2 =0$ on $X \setminus D$, we have 
\begin{equation*}
\text{Ric} (\hat{\omega})_{\ijbar} =  - \frac{\partial^2}{ \partial z_i \partial \bar{z}_j} \log \left( \sum_{j=0}^{n-1} a_j |z_1|^{ 2 \beta j} + \sum_{j=0}^n b_j |z_1|^{2 - 2 \beta + 2 \beta j} \right).
\end{equation*}
Note now that we can write
\begin{align}
&\log \left( \sum_{j=0}^{n-1} a_j |z_1|^{ 2 \beta j} + \sum_{j=0}^n b_j |z_1|^{2 - 2 \beta + 2 \beta j} \right) \notag \\
&= F_0 + \log \left( O(1) + O(|z_1|^{2 - 2 \beta}) + O(|z_1|^{2 \beta}) \right) \notag \\
&= O(1) + O(|z_1|^{2 - 2 \beta}) + O(|z_1|^{2 \beta}) \label{tepoovpmcmf}
\end{align}
with some smooth function $F_0$, around the divisor $D$. Thus, we eventually get $\textup{Ric}(\hat{\omega})_{1 \bar{1}} = O(1) + O(|z_1|^{-2 \beta}) + O(|z_1|^{2 \beta -2})$, $\textup{Ric}(\hat{\omega})_{1 \bar{j}} = O(1) + O(|z_1|^{1 -2 \beta}) + O(|z_1|^{2 \beta -1})$ ($j \neq 1$), and $\textup{Ric}(\hat{\omega})_{j \bar{k}} = O(1)$ ($j, k \neq 1$). Together with Lemma \ref{lemginv}, this means the following.

\begin{lemma} \label{lemrictyf}
Suppose that $\hat{g}$ is a conically singular \kah metric of elementary form with cone angle $2 \pi \beta$ along $D$ locally defined by $z_1 = 0$. Then
\begin{enumerate}
\item $\textup{Ric}(\hat{\omega})_{1 \bar{1}} = O(1) + O(|z_1|^{-2 \beta}) + O(|z_1|^{2 \beta -2})$, 
\item $\textup{Ric}(\hat{\omega})_{1 \bar{j}} = O(1) + O(|z_1|^{1 -2 \beta}) + O(|z_1|^{2 \beta -1})$ ($j \neq 1$),
\item $\textup{Ric}(\hat{\omega})_{j \bar{k}} = O(1)$ ($j,k \neq 1$).
\end{enumerate}
In particular, combined with Lemma \ref{lemginv}, we see that the scalar curvature $S(\hat{\omega})$ can be estimated as $S(\hat{\omega}) = O(1) + O(|z_1|^{2 - 4 \beta})$.
\end{lemma}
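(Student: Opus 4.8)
The plan is to obtain the three Ricci estimates by reading off the orders directly from $\textup{Ric}(\hat{\omega})_{\ijbar} = - \partial_i \bar{\partial}_j \log ( \hat{\omega}^n / \omega_0^n )$, using the volume-form expansion above (cf.~Lemma \ref{lemvoltyp}) together with the pluriharmonicity of $\log |z_1|^2$ away from $D$. First I would factor $|z_1|^{2\beta - 2}$ out of the ratio $\hat{\omega}^n / \omega_0^n$; the resulting term $(2\beta - 2) \log |z_1|$ satisfies $\partial_i \bar{\partial}_j \log |z_1|^2 = 0$ on $X \setminus D$ and so drops out, reducing the problem to differentiating $\log G$ twice, where $G := \sum_{j=0}^{n-1} a_j |z_1|^{2 \beta j} + \sum_{j=0}^n b_j |z_1|^{2 - 2\beta + 2 \beta j}$. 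Since $a_0 > 0$ is smooth and bounded away from $0$, I would write $\log G = \log a_0 + \log ( 1 + x )$ with $x := a_0^{-1} (G - a_0) = O(|z_1|^{2\beta}) + O(|z_1|^{2 - 2\beta})$, which tends to $0$ near $D$, so the logarithm is well defined and $\log(1+x) = x - x^2/2 + \cdots$ converges uniformly near $D$.

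The heart of the computation is then to differentiate the two leading fractional-power terms of $x$, namely $a_0^{-1} a_1 |z_1|^{2\beta}$ and $a_0^{-1} b_0 |z_1|^{2 - 2\beta}$. Using $\partial_1 \bar{\partial}_1 |z_1|^{2\gamma} = \gamma^2 |z_1|^{2\gamma - 2}$ (the smooth coefficients being differentiated harmlessly, which produces only milder terms), the operator $\partial_1 \bar{\partial}_1$ extracts $O(|z_1|^{2\beta - 2})$ and $O(|z_1|^{-2\beta})$ from the two terms; a single $\partial_{z_1}$ lowers the exponent by one, so $\partial_1 \bar{\partial}_j$ ($j \neq 1$) extracts $O(|z_1|^{2\beta - 1})$ and $O(|z_1|^{1 - 2\beta})$; and $\partial_i \bar{\partial}_j$ ($i,j \neq 1$) touches no $z_1$-power and therefore stays $O(1)$, since $|z_1|^{2\beta}, |z_1|^{2 - 2\beta} \to 0$ for $0 < \beta < 1$. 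Adding the $O(1)$ contribution of the smooth summand $\log a_0$ gives exactly the three asserted estimates.

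The hard part — though it is only mildly so — will be to verify that the remaining pieces of $\partial_i \bar{\partial}_j \log(1 + x)$ are genuinely subdominant. Beyond the leading term $\partial_i \bar{\partial}_j x$, the chain rule also produces the cross term $-(\partial_i x)(\bar{\partial}_j x)/(1 + x)^2$ together with the contributions of $x^2, x^3, \dots$. Here $(1+x)^{-1}$ is bounded near $D$, and in the worst case $i = j = 1$ the cross term is $O(|z_1|^{4\beta - 2}) + O(|z_1|^{2 - 4\beta})$, both of which are dominated by $O(|z_1|^{2\beta - 2}) + O(|z_1|^{-2\beta})$ precisely because $0 < \beta < 1$; the higher powers of $x$ are tamer still. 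This is exactly the step at which the standing hypothesis $0 < \beta < 1$ is used.

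Finally, I would deduce the scalar-curvature estimate by contracting, $S(\hat{\omega}) = \sum_{i,j} \hat{g}^{\ijbar} \textup{Ric}(\hat{\omega})_{\ijbar}$, and feeding in the inverse-metric bounds of Lemma \ref{lemginv}, namely $\hat{g}^{1 \bar{1}} = O(|z_1|^{2 - 2\beta})$, $\hat{g}^{1 \bar{j}} = O(|z_1|)$, and $\hat{g}^{i \bar{j}} = O(1)$. Matching these against the three Ricci estimates, every product is $O(1)$ except $\hat{g}^{1 \bar{1}} \textup{Ric}(\hat{\omega})_{1 \bar{1}}$, whose most singular piece is $O(|z_1|^{2 - 2\beta}) \cdot O(|z_1|^{-2\beta}) = O(|z_1|^{2 - 4\beta})$; the products involving $\hat{g}^{1 \bar{j}}$ and $\hat{g}^{i \bar{j}}$ all stay bounded. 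Collecting terms yields $S(\hat{\omega}) = O(1) + O(|z_1|^{2 - 4\beta})$, as claimed.
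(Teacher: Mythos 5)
Your proposal is correct and follows essentially the same route as the paper: factor out $|z_1|^{2\beta-2}$ from $\hat{\omega}^n/\omega_0^n$, discard it via $\ddbar\log|z_1|^2=0$ on $X\setminus D$, observe that the remaining logarithm is $O(1)+O(|z_1|^{2\beta})+O(|z_1|^{2-2\beta})$, and read off the Ricci and scalar curvature orders by differentiating and contracting with the inverse metric of Lemma \ref{lemginv}. The paper states this more tersely (it does not spell out the $\log(1+x)$ expansion or the cross terms $(\partial_i x)(\bar{\partial}_j x)/(1+x)^2$), so your extra verification that those pieces are subdominant for $0<\beta<1$ is a welcome filling-in of detail rather than a departure.
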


\begin{remark} \label{csscall1}
We observe that the above estimate implies
\begin{align*}
\left| \int_{\Omega \setminus D} S(\hat{\omega}) \frac{\hat{\omega}^n}{n!} \right| &< \cst . \int_{\text{unit disk in } \cx} (1+ |z_1|^{2-4 \beta}) |z_1|^{2 \beta -2} \ai dz_1 \wedge d \bar{z}_1 \\
&< \cst . \int_0^1 (r^{2 \beta -1} + r^{-2 \beta +1} )dr < \infty 
\end{align*}
for any open set $\Omega \subset X$ with $\Omega \cap D \neq \emptyset$, as $0 < \beta <1 $.
\end{remark}


\subsection{Scalar curvature as a current} \label{scscaldist}
In order to compute the log Futaki invariant with respect to a conically singular metric $\omega_{sing}$, we need to make sense of $\mathrm{Ric}(\omega_{sing}) \wedge \omega_{sing}^{n-1}$ globally on $X$. However, this is not well-defined for a general conically singular metric $\omega_{sing}$, as we discuss in Remark \ref{gencsmacnd}. We thus restrict our attention to the case of conically singular metrics of elementary form $\hat{\omega}$ or the momentum-constructed conically singular metrics $\omega_{\varphi}$. Theorems \ref{scaldist} and \ref{scaldistmc} state that in these cases it is indeed possible to have a well-defined current $\mathrm{Ric} (\hat{\omega}) \wedge \hat{\omega}^{n-1}$ or $\mathrm{Ric} (\omega_{\varphi}) \wedge \omega_{\varphi}^{n-1}$ on the whole manifold, and this section is devoted to the proof of these results.


\begin{remark} \label{remcompprsts}
	There are some results in the literature that are similar to Theorems \ref{scaldist} and \ref{scaldistmc}. We discuss their similarities and differences below.
	\begin{enumerate}
		\item Li \cite[Proposition 2.16]{li15} also worked on the distributional meaning of the scalar curvature. An important difference to the above results is that Li considered the distributional term $[D] \wedge \omega^{n-1}_{sing}$ as a nonpluripolar product \cite[Lemma 2.14]{li15}, which is zero. Theorems \ref{scaldist} and \ref{scaldistmc} clarify the non-zero contribution from the distributional term $[D]$, by assuming that the conically singular metrics have the preferable forms as in Definition \ref{csmefmcdef}; Li assumed, on the other hand, a condition on the Ricci curvature as in \cite[Definition 2.7]{li15}.
		\item Theorems \ref{scaldist} and \ref{scaldistmc} also bear some similarities to the equation (4.60) in Proposition 4.2 of the paper \cite{sw} by Song and Wang. The main difference is that our theorems show that $\mathrm{Ric}(\hat{\omega}) \wedge \hat{\omega}^{n-1}$ (resp. $\mathrm{Ric} (\omega_{\varphi}) \wedge \omega_{\varphi}^{n-1}$) is a current well-defined over any open subset $\Omega$ in $X$, as opposed to just computing $ \int_X \mathrm{Ric} (\hat{\omega}) \wedge \hat{\omega}^{n-1}$ (resp. $\int_X \mathrm{Ric} (\omega_{\varphi}) \wedge \omega_{\varphi}^{n-1}$); indeed our proof is quite different to theirs, although we have in common the basic strategy of doing the integration by parts ``correctly''. 
	\end{enumerate}
\end{remark}

\begin{remark}
We decide to present the argument for the conically singular metric of elementary form $\hat{\omega}$ in parallel with the one for the momentum-constructed conically singular metric $\omega_{\varphi}$, as they have much in common.

To distinguish these two cases, we continue to denote $\omega_{\varphi}$ for a momentum-constructed conically singular metric on $\mathbb{X} = \prj (\mathcal{F} \oplus \cx)$ over a base \kah manifold $(M ,\omega_M)$, with the projection $p: (\mathcal{F} , h_{\mathcal{F}}) \to (M, \omega_M)$. We do \textit{not} necessarily assume that $p: (\mathcal{F} , h_{\mathcal{F}}) \to (M, \omega_M)$ satisfies $\sigma$-constancy (cf.~Definition  \ref{defsigmaconst}), but \textit{do} need to assume that $\varphi$ is real analytic; we will only rely on the results proved in \S \ref{mccone}, in which we did not assume $\sigma$-constancy but assumed that $\varphi$ is real analytic.


On the other hand, when we consider the conically singular metrics of elementary form $\hat{\omega} = \omega + \lambda \ai \ddbar |s|^{2 \beta}_h$, $X$ can be any (projective) \kah manifold with some smooth effective divisor $D \subset X$.
\end{remark}


\begin{remark} \label{averscaldxd}
Suppose that we write, for a conically singular metric of elementary form $\hat{\omega}$,
\begin{equation*}
\bar{S}(\hat{\omega}) := \frac{1}{\text{Vol}(X, \hat{\omega})} \int_X \mathrm{Ric} (\hat{\omega}) \wedge \frac{\hat{\omega}^{n-1}}{(n-1)!}
\end{equation*}
for the ``average of $S(\hat{\omega})$ on the whole of $X$'', where we note $\text{Vol}(X, \hat{\omega}) :=\int_X \hat{\omega}^n /n! = \int_{X \setminus D} \hat{\omega}^n/n! < \infty$ (by recalling Remark \ref{remvolcs}). We then have, from Theorem \ref{scaldist},
\begin{equation*}
\bar{S} (\hat{\omega}) = \underline{S} (\hat{\omega}) + 2  \pi (1 - \beta) \frac{\text{Vol} (D, \omega)}{\text{Vol}(X, \hat{\omega})},
\end{equation*}
where $\underline{S} (\hat{\omega}) := \int_{X \setminus D}  S (\hat{\omega}) \frac{\hat{\omega}^n}{n!} / \text{Vol} (X , \hat{\omega})$ is the average of $S(\hat{\omega})$ over $X \setminus D$, which makes sense by Remark \ref{csscall1}. Similarly, for a momentum-constructed conically singular metric $\omega_{\varphi}$, we have (by recalling Theorem \ref{scaldistmc} and Lemma  \ref{lemricmc})
\begin{align*}
\bar{S} (\omega_{\varphi}) &= \underline{S} (\omega_{\varphi}) + 2  \pi (1 - \beta) \frac{ \text{Vol} (D, p^* \omega_M(b) )}{\text{Vol}(\mathbb{X}, \omega_{\varphi})} \\
&=  \underline{S} (\omega_{\varphi}) + 2  \pi (1 - \beta) \frac{ \text{Vol} (M,  \omega_M(b) )}{\text{Vol}(\mathbb{X}, \omega_{\varphi})}.
\end{align*}

The reader is warned that the average of the scalar curvature $\bar{S} (\hat{\omega})$ computed with respect to the conically singular metrics may \textit{not} be a cohomological invariant since $\mathrm{Ric} (\hat{\omega})$ is not necessarily a de Rham representative of $c_1 (L)$ due to the cone singularities of $\hat{\omega}$, whereas $\text{Vol} (D , \omega) = \int_D c_1 (L)^{n-1} /{(n-1)!}$ certainly is. Exactly the same remark of course applies to the momentum-constructed conically singular metric $\omega_{\varphi}$. On the other hand, we can show $ \text{Vol}(X, \hat{\omega}) = \int_X c_1 (L)^n / n!$ (cf.~Lemma \ref{volinvcstf}), and $\text{Vol} (\mathbb{X} , \omega_{\varphi}) = 4 \pi b \text{Vol} (M , \omega_M)$ (cf.~Remark \ref{kclmcmcvol}) for $\mathbb{X} = \prj( \mathcal{F} \oplus \cx)$.
\end{remark}

\begin{remark} \label{gencsmacnd}
	We will use in the proof the estimates established in \S \ref{mccone} and \S \ref{eleestcstf}, and our proof will not apply to conically singular metrics in full generality. Most importantly, we do not know what the ``distributional'' component (i.e.~the second term in Theorems \ref{scaldist} and \ref{scaldistmc}) should be for a general conically singular metric $\omega_{sing}$; the proof below shows that it should be equal to $[D] \wedge \omega_{sing}^{n-1}$, $[D]$ being a current of integration over $D$, but it is far from obvious that it is well-defined (particularly so since $\omega_{sing}$ is singular along $D$). Indeed, even for the case of conically singular metrics of elementary form $\hat{\omega}$, $[D] \wedge \hat{\omega}^{n-1}$ being well-defined as a current with nontrivial contribution from $[D]$ (Lemma \ref{termtwo}) seems to be a new result (cf.~Remark \ref{remcompprsts}). 



\end{remark}

\begin{proof}[Proof of Theorems \ref{scaldist} and \ref{scaldistmc}]
The proof is essentially a repetition of the usual proof of the Poincar\'e--Lelong formula (cf.~\cite{dem}), with some modifications needed to take care of the cone singularities of $\hat{\omega}$ and $\omega_{\varphi}$.

We first consider the case of the conically singular metric of elementary form $\hat{\omega}$. We first pick a $C^{\infty}$-tubular neighbourhood $D_{0}$ around $D$ with (small but fixed) radius $\epsilon_0$, meaning that points in $D_{0}$ have distance less than $\epsilon_0$ from $D$ measured in the metric $\omega$. We then write
\begin{equation*}
\int_{\Omega} f \mathrm{Ric} (\hat{\omega}) \wedge \frac{\hat{\omega}^{n-1}}{(n-1)!} = \int_{\Omega \setminus D_{0}} f \mathrm{Ric} (\hat{\omega}) \wedge \frac{\hat{\omega}^{n-1}}{(n-1)!} + \int_{\Omega \cap D_{0}} f \mathrm{Ric} (\hat{\omega}) \wedge \frac{\hat{\omega}^{n-1}}{(n-1)!}
\end{equation*}
and apply the partition of unity on the compact manifold $\overline{\Omega \cap {D}_{0}}$ (i.e.~the closure of $\Omega \cap D_0$) to reduce to the local computation in a small open set $U \subset \Omega \cap D_0$ around the divisor $D$. Confusing $U \subset \Omega \cap D_0$ with an open set in $\cx^n$, this means that we take an open set $U$ in $\cx^n$ (by abuse of notation) endowed with the \kah metric $\omega$, where we may also assume that $U$ is biholomorphic to the polydisk $\{ (z_1 , \dots,  z_n) \mid |z_1|_{\omega} < \epsilon_0 /2 , |z_2|_{\omega} < \epsilon_0 /2 , \dots , |z_n|_{\omega} < \epsilon_0 /2 \}$, in which the divisor $D$ is given by the local equation $z_1 =0$. Thus our aim now is to show
\begin{equation*}
 \int_{U} f \mathrm{Ric} (\hat{\omega}) \wedge \frac{\hat{\omega}^{n-1}}{(n-1)!} = \int_{U \setminus \{ z_1 =0 \}} f S (\hat{\omega}) \frac{\hat{\omega}^n}{n!} + 2  \pi (1 - \beta) \int_{ \{ z_1 =0\} } f \frac{\omega^{n-1}}{(n-1)!} ,
\end{equation*}
where we recall that the partition of unity allows us to assume that $f$ is smooth and compactly supported on $U$.

Note that exactly the same argument applies to the momentum-constructed conically singular metric $\omega_{\varphi}$, by using some reference smooth metric $\omega_0$ on $X$ (in place of $\omega$) to define $D_0$. Hence our aim for the momentum-constructed conically singular metric $\omega_{\varphi}$ is to show
\begin{equation*}
 \int_{U} f \mathrm{Ric} (\omega_{\varphi}) \wedge \frac{\omega_{\varphi}^{n-1}}{(n-1)!} 
 = \int_{U \setminus \{ z_1 =0 \}} f S (\omega_{\varphi}) \frac{\omega_{\varphi}^n}{n!} + 2  \pi (1 - \beta)  \int_{ \{ z_1 =0\} }  f \frac{p^* \omega_M(b)^{n-1}}{(n-1)!} ,
\end{equation*}
for a smooth and compactly supported $f$.

For the conically singular metrics of elementary form $\hat{\omega}$, we recall Lemma \ref{lemvoltyp} and write $\hat{\omega}^n = |z_1|^{2 \beta -2} \alpha$ with some smooth bounded $(n,n)$-form $\alpha$ on $X \setminus D$,  and hence have $\ddbar \log \det (\hat{\omega}) = (\beta -1) \ddbar \log |z_1|^2 + R$ where $R$ is a 2-form which is smooth on $U \setminus \{ z_1 =0\}$ but may have a pole (of fractional order) along $\{ z_1 =0 \}$. We thus write
\begin{align}
\mathrm{Ric} (\hat{\omega}) \wedge \hat{\omega}^{n-1} &= -  \ai \ddbar \log \det (\hat{\omega}) \wedge \hat{\omega}^{n-1} \notag \\
&=  (1 - \beta) \ai \ddbar \log |z_1|^2 \wedge \hat{\omega}^{n-1} -  \ai R \wedge \hat{\omega}^{n-1} . \label{sctyfmform}
\end{align}

On the other hand, we can argue in exactly the same way, by using (\ref{volfmc}) in place of Lemma \ref{lemvoltyp}, to see that for a momentum-constructed conically singular metric $\omega_{\varphi}$, we can write
\begin{align}
\mathrm{Ric} (\omega_{\varphi}) \wedge \omega^{n-1}_{\varphi} &= - \ai \ddbar \log \det (\omega_{\varphi}) \wedge \omega_{\varphi}^{n-1} \notag \\
&=  (1 - \beta) \ai \ddbar \log |z_1|^2 \wedge \omega_{\varphi}^{n-1} -  \ai R_{\varphi} \wedge \omega_{\varphi}^{n-1}  \label{scmcform}
\end{align}
for some 2-form $R_{\varphi}$ that is smooth on $U \setminus \{ z_1 =0\}$ but may have a pole (of fractional order) along $\{ z_1 =0 \}$. 

We aim to show that these formulae (\ref{sctyfmform}) and (\ref{scmcform}) are well-defined in the weak sense. This means that we aim to show that
\begin{equation*}
\int_U f \mathrm{Ric} (\hat{\omega}) \wedge \frac{\hat{\omega}^{n-1}}{(n-1)!} 
= \int_{U} f \ai R \wedge \frac{\hat{\omega}^{n-1}}{(n-1)!} +  (1 - \beta) \int_{U}f   \ai \ddbar \log |z_1|^2 \wedge \frac{\hat{\omega}^{n-1}}{(n-1)!}
\end{equation*}
is well-defined and is equal to
\begin{equation*}
\int_{U \setminus \{ z_1 =0 \}} f S (\hat{\omega}) \frac{\hat{\omega}^n}{n!} +  2  \pi (1 - \beta) \int_{ \{ z_1 =0\} } f \frac{\omega^{n-1}}{(n-1)!}
\end{equation*}
for any smooth function $f$ with compact support in $U$. Theorem \ref{scaldist} obviously follows from this, and exactly the same argument applies to $\omega_{\varphi}$ to prove Theorem \ref{scaldistmc}.


We prove these claims as follows. Let $U_{\epsilon}$ be a subset of $U$ defined for sufficiently small $\epsilon \ll \epsilon_0$ by $U_{\epsilon} := \{ (z_1 , \dots,  z_n) \in U \mid 0< \epsilon < |z_1| \}$ (the norm in the inequality $\epsilon < |z_1|$ is given by the Euclidean metric on $\cx^n$). In Lemma \ref{termone}, we shall prove that
\begin{equation*}
- n  \int_{U} f \ai R \wedge \hat{\omega}^{n-1} = - n  \lim_{\epsilon \to 0}  \int_{U_{\epsilon}} f \ai R \wedge \hat{\omega}^{n-1} = \int_{U \setminus \{ z_1 =0 \}} f S (\hat{\omega}) \hat{\omega}^n 
\end{equation*}
for a conically singular metric of elementary form $\hat{\omega}$, and
\begin{equation*}
- n  \int_{U} f \ai R_{\varphi} \wedge \omega_{\varphi}^{n-1} = - n  \lim_{\epsilon \to 0}  \int_{U_{\epsilon}} f \ai R_{\varphi} \wedge \omega_{\varphi}^{n-1} = \int_{U \setminus \{ z_1 =0 \}} f S (\omega_{\varphi}) \omega_{\varphi}^n 
\end{equation*}
for a momentum-constructed conically singular metric $\omega_{\varphi}$, and that both of these terms are finite if $f$ is compactly supported on $U$, 

In Lemma \ref{termtwo} we shall prove
\begin{equation*}
 \int_{U}f   \ai \ddbar \log |z_1|^2 \wedge \hat{\omega}^{n-1} = 2  \pi  \int_{ \{ z_1 =0\} } f \omega^{n-1} ,
\end{equation*}
and in Lemma \ref{termtwomc} we shall prove
\begin{equation*}
\int_{U}f   \ai \ddbar \log |z_1|^2 \wedge \omega_{\varphi}^{n-1} = 2 \pi  \int_{\{ z_1 =0\}} f p^* \omega_M(b)^{n-1} ,
\end{equation*}
if $f$ is smooth. Granted these lemmas, we complete the proof of Theorems \ref{scaldist} and \ref{scaldistmc}.
\end{proof}

\begin{lemma} \label{termone}
For a conically singular metric of elementary form $\hat{\omega}$, we have
\begin{equation*}
- n  \int_{U} f \ai R \wedge \hat{\omega}^{n-1} = - n  \lim_{\epsilon \to 0}  \int_{U_{\epsilon}} f \ai R \wedge \hat{\omega}^{n-1} = \int_{U \setminus \{ z_1 =0 \}} f S (\hat{\omega}) \hat{\omega}^n 
\end{equation*}
and the integral is well-defined for any smooth function $f$ compactly supported on $U$, i.e. $ \left|  \int_U f \ai R \wedge \hat{\omega}^{n-1} \right|$ is finite.

For a momentum-constructed conically singular metric $\omega_{\varphi}$, we have
\begin{equation*}
- n  \int_{U} f \ai R_{\varphi} \wedge \omega_{\varphi}^{n-1} = - n  \lim_{\epsilon \to 0}  \int_{U_{\epsilon}} f \ai R_{\varphi} \wedge \omega_{\varphi}^{n-1} = \int_{U \setminus \{ z_1 =0 \}} f S (\omega_{\varphi}) \omega_{\varphi}^n 
\end{equation*}
and the integral is well-defined for any smooth function $f$ compactly supported on $U$.
\end{lemma}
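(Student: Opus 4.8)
The plan is to reduce both assertions to a single pointwise algebraic identity on $U \setminus \{ z_1 = 0 \}$, where no distributional subtlety arises, and then to justify passing to the limit $\epsilon \to 0$ using the integrability estimates already established in \S\ref{eleestcstf} and \S\ref{mccone}. First I would recall from (\ref{sctyfmform}) that $\mathrm{Ric}(\hat{\omega}) = (1 - \beta)\ai\ddbar\log|z_1|^2 - \ai R$, and observe that $\ddbar\log|z_1|^2 = 0$ pointwise on $U \setminus \{ z_1 = 0 \}$, so that $\mathrm{Ric}(\hat{\omega}) = -\ai R$ there as smooth forms. Combining this with the elementary trace identity $\eta \wedge \hat{\omega}^{n-1} = \tfrac{1}{n}(\tr_{\hat{\omega}}\eta)\,\hat{\omega}^n$ for a real $(1,1)$-form $\eta$, applied to $\eta = \mathrm{Ric}(\hat{\omega})$ (whose trace is $S(\hat{\omega})$), yields the pointwise identity
\begin{equation*}
-n\, \ai R \wedge \hat{\omega}^{n-1} = S(\hat{\omega})\,\hat{\omega}^n \qquad \text{on } U \setminus \{ z_1 = 0 \}.
\end{equation*}

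Next, for each fixed $\epsilon > 0$ the metric $\hat{\omega}$ is smooth and nondegenerate on the closure of $U_{\epsilon}$, so integrating the displayed identity against $f$ gives $-n\int_{U_{\epsilon}} f \ai R \wedge \hat{\omega}^{n-1} = \int_{U_{\epsilon}} f S(\hat{\omega})\,\hat{\omega}^n$ with no boundary or distributional contribution. It then remains only to let $\epsilon \to 0$. Since $U_{\epsilon} \uparrow U \setminus \{ z_1 = 0 \}$, I would invoke the dominated convergence theorem: by Lemma \ref{lemrictyf} we have $S(\hat{\omega}) = O(1) + O(|z_1|^{2 - 4\beta})$ and $\hat{\omega}^n = O(|z_1|^{2\beta - 2})$, so the computation recorded in Remark \ref{csscall1} shows $\int_{U \setminus \{ z_1 = 0 \}} |f|\,|S(\hat{\omega})|\,\hat{\omega}^n < \infty$ for $0 < \beta < 1$ (the radial integrands $r^{2\beta - 1}$ and $r^{1 - 2\beta}$ both being integrable near $0$). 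Hence the right-hand side converges to $\int_{U \setminus \{ z_1 = 0 \}} f S(\hat{\omega})\,\hat{\omega}^n$, the limit on the left exists and equals it, and since $\ai R \wedge \hat{\omega}^{n-1}$ agrees pointwise (up to the factor $-n$) with the $L^1$ form $S(\hat{\omega})\,\hat{\omega}^n$ on the full-measure set $U \setminus \{ z_1 = 0 \}$, the quantity $\int_U f \ai R \wedge \hat{\omega}^{n-1}$ is finite and well-defined as a Lebesgue integral, establishing the full chain of equalities.

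The momentum-constructed case is completely parallel: I would replace (\ref{sctyfmform}) by (\ref{scmcform}), so that $\mathrm{Ric}(\omega_{\varphi}) = -\ai R_{\varphi}$ on $U \setminus \{ z_1 = 0 \}$ and $-n\,\ai R_{\varphi} \wedge \omega_{\varphi}^{n-1} = S(\omega_{\varphi})\,\omega_{\varphi}^n$ there. The integrability input is now Lemma \ref{lemricmc}, which gives that $S(\omega_{\varphi})$ is \emph{bounded} on $X \setminus D$ for $0 < \beta < 1$, together with the volume estimate (\ref{volfmc}) giving $\omega_{\varphi}^n = O(|z_1|^{2\beta - 2})$; thus $|S(\omega_{\varphi})|\,\omega_{\varphi}^n = O(|z_1|^{2\beta - 2})$ is integrable near $D$ for $\beta > 0$, and the same dominated-convergence argument applies verbatim.

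I do not expect a serious obstacle here: the only genuine content is the absolute integrability near $D$, which is already supplied by the curvature and volume-form estimates of \S\ref{eleestcstf} and \S\ref{mccone}. The single point requiring care is to keep the distributional contribution entirely out of this lemma; it enters only through the term $(1 - \beta)\ai\ddbar\log|z_1|^2 \wedge \hat{\omega}^{n-1}$, which vanishes pointwise off $D$ and is treated separately in Lemma \ref{termtwo} (resp.~Lemma \ref{termtwomc}). Consequently, in the present lemma one is genuinely integrating a \emph{smooth} form over the open set $U \setminus \{ z_1 = 0 \}$, and the $L^1$ bound is all that is needed to justify the interchange of limit and integral.
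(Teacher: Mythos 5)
Your proposal is correct and follows essentially the same route as the paper: both rest on the pointwise identity $-n\,\ai R \wedge \hat{\omega}^{n-1} = S(\hat{\omega})\,\hat{\omega}^n$ on $U \setminus \{z_1 = 0\}$ (coming from $\ddbar \log |z_1|^2 = 0$ off $D$) together with the integrability supplied by Lemma \ref{lemrictyf} and the volume-form estimate, the paper phrasing the passage to the limit via an explicit bound on $\int_{U \setminus U_{\epsilon}}$ where you invoke dominated convergence. The treatment of the momentum-constructed case via the stronger boundedness of $S(\omega_{\varphi})$ from Lemma \ref{lemricmc} also matches the paper.
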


\begin{proof}

We first consider the case of the conically singular metric of elementary form $\hat{\omega}$. Although $R$ is not bounded on the whole of $U \setminus \{ z_1 =0 \}$, Lemma \ref{lemrictyf} shows that the metric contraction of $R$ with $\hat{\omega}$ (which is equal to $S(\hat{\omega})/n$ on $X \setminus D$) satisfies
\begin{equation}
|\Lambda_{\hat{\omega}} R | < \cst .(1+ |z_1|^{2- 4\beta} ) . \label{estscal}
\end{equation}
on $U \setminus \{ z_1 =0 \}$, thus
\begin{equation*}
|R \wedge \hat{\omega}^{n-1} |_{{\omega}} \le \cst . (|z_1|^{2\beta -2} + |z_1|^{2- 4\beta + 2 \beta -2})=  \cst . (|z_1|^{2\beta -2} + |z_1|^{- 2\beta})
\end{equation*}
on $U \setminus \{ z_1 =0 \}$. Since $f$ is bounded on the whole of $U$, we see, by writing $r:=|z_1| $ and choosing a large but fixed number $A$ which depends only on $U$ and $\omega$, that
\begin{align}
\lim_{\epsilon \to 0} \left|  \int_{U_{\epsilon}} f \ai R \wedge \hat{\omega}^{n-1} \right| & \le \cst . \lim_{\epsilon \to 0} \int_{U_{\epsilon}} (|z_1|^{2\beta -2} + |z_1|^{- 2\beta}) \omega^n \notag \\
&\le \cst . \lim_{\epsilon \to 0} \int_{\epsilon < |z_1| < A } (|z_1|^{2\beta -2} + |z_1|^{- 2\beta}) \ai dz_1 \wedge d\bar{z}_1 \notag \\
&\le \cst . \lim_{\epsilon \to 0} \int_{\epsilon}^{A} (r^{2 \beta -2} + r^{-2 \beta}) rdr < \infty \notag 
\end{align}
since $0 < \beta < 1$. In other words, the above shows that the signed measure defined by $\ai R \wedge \hat{\omega}^{n-1}$ on $U$ is well-defined. Observe also
\begin{align}
\left|  \int_{U \setminus U_{\epsilon}} f \ai R \wedge \hat{\omega}^{n-1} \right| &\le \cst . \int_{U \setminus U_{\epsilon}} |f \ai R \wedge \hat{\omega}^{n-1}|_{\omega} \omega^n \notag \\
&\le \cst . \int_0^{\epsilon} \sup_{|z_1|=r} |f \ai R \wedge \hat{\omega}^{n-1}|_{\omega} rdr \label{eestscal} \\
& \le \cst . \int_{0}^{\epsilon} (r^{2 \beta -1} + r^{1 -2 \beta})dr \to 0\notag 
\end{align}
as $\epsilon \to 0$, where we used the elementary $\int_0^{\epsilon} = \int_{[0 , \epsilon]} = \int_{(0, \epsilon]}$ in (\ref{eestscal}) to apply (\ref{estscal}), by noting that $\sup_{|z_1|=r} |f \ai R \wedge \hat{\omega}^{n-1}|_{\omega}$ is continuous in $ r \in (0, \epsilon]$ and its only singularity is the pole of fractional order at $r =0$. We thus have
\begin{equation*}
\int_{U } f \ai R \wedge \hat{\omega}^{n-1} = \lim_{\epsilon \to 0} \int_{U_{\epsilon} } f \ai R \wedge \hat{\omega}^{n-1} =  \int_{U \setminus \{z_1 =0 \} } f \ai R \wedge \hat{\omega}^{n-1} 
\end{equation*}
and the above integrals are all finite. 

On the other hand, we know that $\ddbar \log |z_1|^2 =0$ on $U \setminus \{ z_1 =0 \}$, and hence, recalling (\ref{sctyfmform}), $S (\hat{\omega}) \hat{\omega}^n = -n \ai R \wedge \hat{\omega}^{n-1}$ on $U \setminus \{ z_1 =0 \}$. Thus we can write
\begin{equation*}
-n \int_{U } f \ai R \wedge \hat{\omega}^{n-1} = - n  \lim_{\epsilon \to 0}  \int_{U_{\epsilon}} f \ai R \wedge \hat{\omega}^{n-1} = \int_{U \setminus \{ z_1 =0 \}} f S(\hat{\omega}) \hat{\omega}^n
\end{equation*}
as claimed.

For the case of momentum-constructed conically singular metric $\omega_{\varphi}$, Lemma \ref{lemricmc} shows that $|\Lambda_{\omega_{\varphi}} R_{\varphi} | $ is bounded on $U \setminus \{ z_1 =0 \}$. Since this is better than the estimate (\ref{estscal}), all the following argument applies word by word. We thus establish the claim for the momentum-constructed conically singular metric.
\end{proof}


\begin{lemma} \label{termtwo}
For a conically singular metric of elementary form $\hat{\omega}$,
\begin{equation*}
 \int_{U}f   \ai \ddbar \log |z_1|^2 \wedge \hat{\omega}^{n-1} = 2  \pi  \int_{ \{ z_1 =0\} } f \omega^{n-1} ,
\end{equation*}
if $f$ is smooth and compactly supported in $U$.
\end{lemma}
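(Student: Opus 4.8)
The plan is to reduce the identity to the classical Poincar\'e--Lelong formula via the regularised potential $\log(|z_1|^2+\epsilon^2)$, exploiting the fact that the singular components of $\hat{\omega}^{n-1}$ are automatically filtered out of the wedge product. First I would record the regularisation: set $\Theta_\epsilon := \ai \ddbar \log (|z_1|^2 + \epsilon^2)$ on $U$ and compute directly that $\Theta_\epsilon = \ai \frac{\epsilon^2}{(|z_1|^2 + \epsilon^2)^2} dz_1 \wedge d \bar{z}_1$, a smooth closed positive $(1,1)$-form proportional to $dz_1 \wedge d \bar{z}_1$. A one-variable computation (substituting $r = |z_1|$) shows $\int \frac{\epsilon^2}{(|z_1|^2+\epsilon^2)^2} \ai dz_1 \wedge d \bar{z}_1 = 2 \pi$ for every $\epsilon$, so $\Theta_\epsilon$ is an approximate identity of total mass $2 \pi$ concentrating on $D = \{ z_1 = 0 \}$; this is the standard regularised Poincar\'e--Lelong statement $\Theta_\epsilon \to 2 \pi [D]$ (cf.~\cite{dem}), which I take as the rigorous meaning of the current $\ai \ddbar \log |z_1|^2 \wedge \hat{\omega}^{n-1}$ paired with the continuous $L^1$ form $\hat{\omega}^{n-1}$, namely $\int_U f \ai \ddbar \log |z_1|^2 \wedge \hat{\omega}^{n-1} := \lim_{\epsilon \to 0} \int_U f \, \Theta_\epsilon \wedge \hat{\omega}^{n-1}$.

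The key structural observation is that, since $\Theta_\epsilon$ is a multiple of $dz_1 \wedge d \bar{z}_1$, only the \emph{horizontal} part of $\hat{\omega}^{n-1}$ --- the summand $\hat{\omega}_{\mathrm{hor}}^{n-1}$ built from $\hat{g}_{i \bar{j}}$ with $i,j \neq 1$ and involving neither $dz_1$ nor $d \bar{z}_1$ --- survives the wedge, every other term in the multinomial expansion of $\hat{\omega}^{n-1}$ carrying a factor $dz_1$ or $d \bar{z}_1$. This is exactly the point that discards the singular components $\hat{g}_{1 \bar{1}} = O(|z_1|^{2 \beta - 2})$ and $\hat{g}_{1 \bar{j}} = O(|z_1|^{2 \beta - 1})$: they never enter, being attached to $dz_1, d \bar{z}_1$. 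The surviving coefficients are polynomial expressions in the $\hat{g}_{i \bar{j}}$ ($i,j \neq 1$), which by the estimates of \S \ref{eleestcstf} satisfy $\hat{g}_{i \bar{j}} = g_{i \bar{j}} + O(|z_1|^{2 \beta})$ and hence are bounded and continuous up to $D$, with boundary value the corresponding coefficients of $\omega^{n-1}$. Thus $\ai dz_1 \wedge d \bar{z}_1 \wedge \hat{\omega}^{n-1} = \ai dz_1 \wedge d \bar{z}_1 \wedge \hat{\omega}_{\mathrm{hor}}^{n-1}$ with $f \, \hat{\omega}_{\mathrm{hor}}^{n-1}$ continuous on all of $U$.

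Finally I would evaluate the limit by the approximate-identity property: integrating first in the $z_1$-variable, the concentration of $\frac{\epsilon^2}{(|z_1|^2+\epsilon^2)^2} \ai dz_1 \wedge d \bar{z}_1$ at $z_1 = 0$ (of mass $2 \pi$) together with the continuity of $f \hat{\omega}_{\mathrm{hor}}^{n-1}$ up to $D$ yields, by dominated convergence, $\lim_{\epsilon \to 0} \int_U f \Theta_\epsilon \wedge \hat{\omega}^{n-1} = 2 \pi \int_{\{ z_1 = 0 \}} f \, \hat{\omega}_{\mathrm{hor}}^{n-1} |_{z_1 = 0}$. Since the pullback of $\omega^{n-1}$ along the inclusion of $D$ is precisely $\omega_{\mathrm{hor}}^{n-1}|_{z_1=0} = \hat{\omega}_{\mathrm{hor}}^{n-1}|_{z_1 = 0}$, this equals $2 \pi \int_{\{ z_1 = 0 \}} f \omega^{n-1}$, as claimed, and the compact support of $f$ guarantees all integrals are finite.

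I expect the main obstacle to be the bookkeeping that justifies discarding the $dz_1, d \bar{z}_1$-terms rigorously in the limit (not merely formally) and confirming that this regularised pairing coincides with the current wedge product implicitly used in the main proof of Theorems \ref{scaldist} and \ref{scaldistmc}. Once the singular directions are seen to drop out --- which is the whole force of the estimates in \S \ref{eleestcstf} --- the remaining analysis is a routine approximate-identity argument, and an entirely parallel computation, using (\ref{volfmc}) in place of Lemma \ref{lemvoltyp}, will give Lemma \ref{termtwomc} for $\omega_{\varphi}$ with $\omega^{n-1}$ replaced by $p^* \omega_M(b)^{n-1}$.
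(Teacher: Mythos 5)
Your proof is correct, but it takes a genuinely different route from the paper. The paper excises a tube $\{|z_1|\le\epsilon\}$, splits $dd^c\log|z_1|^2\wedge f\hat{\omega}^{n-1}$ into an exact term plus $d^c\log|z_1|^2\wedge df\wedge\hat{\omega}^{n-1}$, and then spends most of the proof killing the latter by integration by parts against the auxiliary non-closed form $\tilde{\omega}$ (which strips the $dz_1\wedge d\bar{z}_1$-component of $\hat{\omega}$), via the three boundary estimates (\ref{probfone}), (\ref{probftwo}), (\ref{probfthree}); the surviving Stokes boundary term over $\{|z_1|=\epsilon\}$ then yields $2\pi\int_D f\omega^{n-1}$. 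You instead regularise the potential, and the identity $\Theta_\epsilon=\ai\,\epsilon^2(|z_1|^2+\epsilon^2)^{-2}dz_1\wedge d\bar{z}_1$ does in one stroke what $\tilde{\omega}$ does for the paper: every singular coefficient of $\hat{\omega}^{n-1}$ is attached to $dz_1$ or $d\bar{z}_1$ and is annihilated by the wedge, leaving only the horizontal part, whose coefficients are $g_{i\bar{j}}+O(|z_1|^{2\beta})$ and hence continuous up to $D$; the approximate-identity limit is then routine. This is shorter and avoids the boundary estimates entirely, and your extension to the non-smooth potentials $f'=O(|z_1|^{2\beta})$ needed in \S\ref{futinvcmpcsmtf} is immediate (the limit is $2\pi\int_D f'\omega^{n-1}=0$ since $f'$ is continuous with $f'|_D=0$). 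Two caveats: first, as you note, you must identify your regularised pairing with the excision-type pairing used when the pieces are reassembled in the proof of Theorem \ref{scaldist} — this is harmless since both limits exist and both compute to the same number, but it deserves an explicit sentence; second, the estimates the paper develops inside this proof (e.g.\ (\ref{estomtilde}) and the control of $\partial\psi\wedge\hat{\omega}^{n-1}$ on $\partial V_\epsilon$) are reused verbatim in \S\ref{invfutwrtcsmtf} and in Lemma \ref{termtwomc}, so replacing this proof does not dispense with that work elsewhere. Also, your description of $\hat{\omega}^{n-1}$ as a ``continuous $L^1$ form'' is imprecise — it is $L^1$ but not continuous because of the $\hat{g}_{1\bar{1}}=O(|z_1|^{2\beta-2})$ coefficient — though your actual argument only uses continuity of the horizontal part, which is what holds.
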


\begin{remark}
Note that we cannot naively apply the usual Poincar\'e--Lelong formula, since the metric $\hat{\omega}$ is singular along $\{ z_1 =0 \}$. Note also that the integral $ \int_{ \{ z_1 =0\} } f \omega^{n-1}$ is manifestly finite.
\end{remark}

\begin{proof}
We start by re-writing
\begin{align}
&\int_{U } \ai \ddbar \log |z_1|^2 \wedge f \hat{\omega}^{n-1} \notag \\
&=   \frac{1}{2} \lim_{\epsilon \to 0} \int_{U \setminus U_{\epsilon}} dd^c \log |z_1|^2 \wedge f \hat{\omega}^{n-1} \notag \\
&= \frac{1}{2} \lim_{\epsilon \to 0}   \int_{U \setminus U_{\epsilon}} d \left( d^c \log |z_1|^2 \wedge f \hat{\omega}^{n-1} \right) + \frac{1}{2} \lim_{\epsilon \to 0}  \int_{U \setminus U_{\epsilon}}  d^c \log |z_1|^2 \wedge df \wedge \hat{\omega}^{n-1} \label{distpart}
\end{align}
since $\ddbar \log |z_1|^2 =0$ if $ |z_1| \neq 0$, where we used $d = \partial + \bar{\partial}$ and $d^c = \ai (\bar{\partial} - \partial)$.


We first claim $\lim_{\epsilon \to 0}  \int_{U \setminus U_{\epsilon}}  d^c \log |z_1|^2 \wedge df \wedge \hat{\omega}^{n-1} =0$. We start by observing that $\hat{\omega}^{n-1}$ cannot contain the term proportionate to $dz_1 \wedge d\bar{z}_1$ when we take the wedge product of it with $d^c \log |z_1|^2$ or $d \log|z_1|^2$, since it will be cancelled by them. Namely, writing $|s|^{2 \beta}_{h} = e^{\phi}|z_1|^{2 \beta}$ and defining
\begin{align}
\tilde{\omega} &:= \hat{\omega} - \lambda \ai \frac{\partial^2}{\partial z_1 \partial \bar{z}_1} (e^{\phi} |z_1|^{2 \beta}) dz_1 \wedge d \bar{z}_1 \notag \\
&= \omega  + \lambda \ai \left(  \sum_{j =2}^n \beta |z_1|^{2 \beta -2} z_1(\partial_{\bar{j}} e^{\phi}) dz_1 \wedge d \bar{z}_j + c.c. + |z_1|^{2 \beta} \eta ' \right)  \label{omegatilde}
\end{align}
where $\eta' := \ddbar e^{\phi} -  \frac{\partial^2 e^{\phi}}{\partial z_1 \partial \bar{z}_1}  dz_1 \wedge d \bar{z}_1$ is a smooth 2-form, we have $d^c \log |z_1|^2 \wedge \hat{\omega}^{n-1} = d^c \log |z_1|^2 \wedge \tilde{\omega}^{n-1}$ and $d \log |z_1|^2 \wedge \hat{\omega}^{n-1} = d \log |z_1|^2 \wedge \tilde{\omega}^{n-1}$. It should be stressed that $\tilde{\omega}$ is not necessarily closed; indeed we observe $d \tilde{\omega} = - \lambda \ai d\left(  \frac{\partial^2}{\partial z_1 \partial \bar{z}_1} (e^{\phi} |z_1|^{2 \beta}) dz_1 \wedge d \bar{z}_1 \right)$. Note also that $\tilde{\omega} \le \cst . \hat{\omega}$.

Combined with the well-known equality $d^c \log |z_1|^2 \wedge df \wedge \hat{\omega}^{n-1} = -   d \log |z_1|^2 \wedge d^c f \wedge \hat{\omega}^{n-1}$, we find
\begin{align}
 \int_{V_{\epsilon}}  d^c \log |z_1|^2 \wedge df \wedge \hat{\omega}^{n-1} \ \ \ \ \ \ \ \ \ \ \ \ \ \ \ \ & \notag \\
 = - \int_{V_{\epsilon}}  d \left (\log |z_1|^2  d^c f \wedge \tilde{\omega}^{n-1} \right) &+  \int_{V_{\epsilon}}   \log |z_1|^2 dd^c f \wedge \tilde{\omega}^{n-1} \label{bounv} \\
&-\int_{V_{\epsilon}}   \log |z_1|^2  d^c f \wedge d \tilde{\omega}^{n-1} \notag
\end{align}
where we decide to write $V_{\epsilon} := U \setminus U_{\epsilon}$.

We evaluate each term separately and show that all of them go to 0 as $\epsilon \to 0$. To evaluate the first term of (\ref{bounv}), we write $\int_{V_{\epsilon}}  d \left (\log |z_1|^2 d^c f \wedge \tilde{\omega}^{n-1} \right)  = \int_{\partial V_{\epsilon}} \log |z_1|^2 d^c f \wedge \tilde{\omega}^{n-1} $. Observe now that 
\begin{equation}
\tilde{\omega} |_{\partial V_{\epsilon}} = \omega |_{\partial V_{\epsilon}} + \lambda \ai \left(  \sum \epsilon^{2 \beta } (\partial_{\bar{j}} e^{\phi}) \ai e^{\ai \theta} d \theta \wedge d \bar{z}_j + c.c. + \epsilon^{2 \beta} \eta' |_{\partial V_{\epsilon}} \right) \label{estomtilde}
\end{equation}
where we wrote $z_1 = \epsilon e^{\ai \theta}$ on $\partial V_{\epsilon} = \{ |z_1| = \epsilon \}$. This means that
\begin{align}
&\left| \int_{\partial V_{\epsilon}} \log |z_1|^2 d^c f \wedge \tilde{\omega}^{n-1} \right| \label{probfone} \\
&\le  \cst . \log \epsilon \left| \int_{\partial V_{\epsilon}} (\epsilon^{2\beta} + \epsilon) d \theta \wedge dz_2 \wedge d \bar{z}_2 \wedge \dots dz_n \wedge d\bar{z}_n \right| \notag \\
&\le \cst . \epsilon^{2 \beta} \log \epsilon \to 0 \notag
\end{align}
as $\epsilon \to 0$, by noting that $dz_1 = \epsilon \ai e^{\ai \theta} d\theta$ on $\partial V_{\epsilon}$ and $f$ is smooth on $U$.

The second term of (\ref{bounv}) can be evaluated as
\begin{align}
\left| \int_{V_{\epsilon}}   \log |z_1|^2  dd^c f \wedge \tilde{\omega}^{n-1} \right| 
&\le \cst . \left| \int_{V_{\epsilon}}   \log r^2  \Delta_{\hat{\omega}} f   \hat{\omega}^{n} \right| \notag \\
&\le \cst . \left| \int_{V_{\epsilon}}   \log r^2  \hat{\omega}^{n} \right|  \label{probftwo} \\
&\le \cst . \left| \int_0^{\epsilon}  r^{2 \beta -1} \log r dr  \right| \to 0 \notag 
\end{align}
as $\epsilon \to 0$, by noting that $\Delta_{\hat{\omega}} f $ is bounded since $f$ is smooth on $U$ (cf.~Lemma \ref{lemginv}).

In order to evaluate the third term of (\ref{bounv}), we start by re-writing it as
\begin{align}
 &\int_{V_{\epsilon}}   \log |z_1|^2  d^c f \wedge d \tilde{\omega}^{n-1} \notag  \\
 &= - \lambda (n-1)  \int_{V_{\epsilon}}   \log |z_1|^2  d^c f \wedge  d\left(  \frac{\partial^2}{\partial z_1 \partial \bar{z}_1} (e^{\phi} |z_1|^{2 \beta}) \ai dz_1 \wedge d \bar{z}_1 \right) \wedge \tilde{\omega}^{n-2}  \label{probftthree}.
\end{align}
We have
\begin{align*}
&d \left( \frac{\partial^2}{\partial z_1 \partial \bar{z}_1} (e^{\phi} |z_1|^{2 \beta}) \ai dz_1 \wedge d \bar{z}_1 \right) \\
&= \sum_{j=2}^n \Big( \beta^2 (\partial_j e^{\phi})|z_1|^{2\beta -2} + \beta \partial_j ((\partial_1 \phi) e^{\phi})|z_1|^{2 \beta -2} z_1 \\
&\ \ + \beta \partial_j ((\partial_{\bar{1}} \phi) e^{\phi})|z_1|^{2 \beta -2} \bar{z}_1+ |z_1|^{2 \beta} \frac{\partial^3 e^{\phi}}{\partial z_1 \partial \bar{z}_1\partial z_j } \Big) \ai dz_1 \wedge d \bar{z}_1 \wedge dz_j + c.c.
\end{align*}
Since $\tilde{\omega}$ does not have any term proportionate to $dz_1$ or $d \bar{z}_1$ when wedged with $d \left( \frac{\partial^2}{\partial z_1 \partial \bar{z}_1} (e^{\phi} |z_1|^{2 \beta}) dz_1 \wedge d \bar{z}_1 \right)$, we have, from (\ref{omegatilde}),
\begin{align*}
&\left| d^c f \wedge  d\left(  \frac{\partial^2}{\partial z_1 \partial \bar{z}_1} (e^{\phi} |z_1|^{2 \beta}) dz_1 \wedge d \bar{z}_1 \right) \wedge \tilde{\omega}^{n-2} \right|_{\omega} \\
&\le \cst . \left| d^c f \wedge  d\left(  \frac{\partial^2}{\partial z_1 \partial \bar{z}_1} (e^{\phi} |z_1|^{2 \beta}) dz_1 \wedge d \bar{z}_1 \right) \wedge {\omega}^{n-2} \right|_{\omega} 
\end{align*}
and noting that $f$ is smooth on $U$, we have
\begin{equation}
\left| d^c f \wedge  d\left(  \frac{\partial^2}{\partial z_1 \partial \bar{z}_1} (e^{\phi} |z_1|^{2 \beta}) dz_1 \wedge d \bar{z}_1 \right) \wedge {\omega}^{n-2} \right|_{\omega} \le \cst . |z_1|^{2 \beta -2} . \label{probfthree}
\end{equation}
Thus
\begin{align}
 &\left| \int_{V_{\epsilon}}   \log |z_1|^2  d^c f \wedge d \tilde{\omega}^{n-1}  \right| \notag \\
 &= \left| \lambda (n-1)  \int_{V_{\epsilon}}   \log |z_1|^2  d^c f \wedge  d\left(  \frac{\partial^2}{\partial z_1 \partial \bar{z}_1} (e^{\phi} |z_1|^{2 \beta}) dz_1 \wedge d \bar{z}_1 \right) \wedge \tilde{\omega}^{n-2} \right| \notag \\
 &\le \cst . \left| \int_{V_{\epsilon}} r^{2 \beta -2} \log r \omega^n \right| \le \cst . \left| \int_0^{\epsilon} r^{2\beta -1} \log r dr \right| \to 0 \label{probffthree}
\end{align}
as $\epsilon \to 0$, finally establishing $ \int_{V_{\epsilon}}  d^c \log |z_1|^2 \wedge df \wedge \hat{\omega}^{n-1} \to 0$ as $\epsilon \to 0$.

Going back to (\ref{distpart}), we have thus shown $\lim_{\epsilon \to 0} \int_{V_{\epsilon}} \ai \ddbar \log |z_1|^2 \wedge f \hat{\omega}^{n-1} = \frac{1}{2} \lim_{\epsilon \to 0} \int_{V_{\epsilon}} d \left( d^c \log |z_1|^2 \wedge f \hat{\omega}^{n-1} \right) $, and hence are reduced to evaluating
\begin{align*}
\lim_{\epsilon \to 0} \int_{V_{\epsilon}} d \left( d^c \log |z_1|^2 \wedge f \hat{\omega}^{n-1} \right) &= \lim_{\epsilon \to 0} \int_{\partial V_{\epsilon}} d^c \log |z_1|^2 \wedge f \hat{\omega}^{n-1} \\
&= \lim_{\epsilon \to 0} \int_{\partial V_{\epsilon}} d^c \log |z_1|^2 \wedge f \tilde{\omega}^{n-1} .
\end{align*}
Recall that $d^c \log |z_1|^2 = 2 d \theta$ on $\{ |z_1| = \epsilon \}$, and also that $\lim_{\epsilon \to 0} \tilde{\omega} |_{\partial V_{\epsilon}} = \omega |_{\{z_1 = 0 \}}$, which follows from (\ref{omegatilde}). We thus have
\begin{align*}
&\lim_{\epsilon \to 0} \int_{\partial V_{\epsilon}} d^c \log |z_1|^2 \wedge f \tilde{\omega}^{n-1} \\
&=  \lim_{\epsilon \to 0} \int_{\partial V_{\epsilon}} 2 d\theta \wedge f \tilde{\omega}^{n-1} = \int_{0}^{2 \pi} 2 d \theta \int_{\{z_1 =0 \}} f \omega^{n-1} =4\pi \int_{\{ z_1=0 \}} f \omega^{n-1}.
\end{align*}
This means that
\begin{align*}
\lim_{\epsilon \to 0} \int_{V_{\epsilon}} \ai \ddbar \log |z_1|^2 \wedge f \hat{\omega}^{n-1} &= \frac{1}{2} \lim_{\epsilon \to 0} \int_{V_{\epsilon}} dd^c \log |z_1|^2 \wedge f \hat{\omega}^{n-1} \\
&= 2\pi \int_{\{ z_1 =0 \}} f \omega^{n-1}
\end{align*}
as claimed.

\end{proof}

\begin{lemma} \label{termtwomc}
For a momentum-constructed conically singular metric $\omega_{\varphi}$,
\begin{equation*}
 \int_{U}f   \ai \ddbar \log |z_1|^2 \wedge \omega_{\varphi}^{n-1} = 2  \pi   \int_{ \{ z_1 =0\} } f p^* \omega_M(b)^{n-1} ,
\end{equation*}
if $f$ is smooth and compactly supported in $U$.
\end{lemma}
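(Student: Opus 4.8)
The plan is to run the proof of Lemma \ref{termtwo} essentially verbatim, substituting the momentum-construction data of Lemma \ref{lemestgmc} for the elementary-form expansions and replacing the limiting base form $\omega$ by $p^* \omega_M(b)$. First I would localise and write $\int_U f \ai \ddbar \log |z_1|^2 \wedge \omega_{\varphi}^{n-1} = \frac{1}{2} \lim_{\epsilon \to 0} \int_{V_\epsilon} dd^c \log|z_1|^2 \wedge f \omega_{\varphi}^{n-1}$ with $V_\epsilon = U \setminus U_\epsilon$, using that $\ddbar \log|z_1|^2 = 0$ on $X \setminus D$. Integration by parts (as in (\ref{distpart})) splits this into a boundary term $\frac{1}{2}\lim_\epsilon \int_{\partial V_\epsilon} d^c \log|z_1|^2 \wedge f \omega_{\varphi}^{n-1}$ and a term involving $d^c \log|z_1|^2 \wedge df \wedge \omega_{\varphi}^{n-1}$, which I will show vanishes.

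The technical device is the modified form $\tilde{\omega}_{\varphi} := \omega_{\varphi} - (g_{\varphi})_{1 \bar{1}} \ai dz_1 \wedge d \bar{z}_1$, the exact analogue of $\tilde{\omega}$ in (\ref{omegatilde}). I would record three facts. First, since $d^c \log|z_1|^2$ is a combination of $dz_1/z_1$ and $d\bar{z}_1/\bar{z}_1$, wedging kills the removed $dz_1 \wedge d\bar{z}_1$ component, so $d^c \log|z_1|^2 \wedge \omega_{\varphi}^{n-1} = d^c \log|z_1|^2 \wedge \tilde{\omega}_{\varphi}^{n-1}$ and likewise for $d\log|z_1|^2$. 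Second, Lemma \ref{lemestgmc} gives $\tilde{\omega}_{\varphi} \le \cst . \omega_{\varphi}$ together with the component orders $(g_{\varphi})_{1 \bar{1}} = O(|z_1|^{2\beta-2})$, $(g_{\varphi})_{1 \bar{j}} = O(|z_1|^{2\beta-1})$, $(g_{\varphi})_{i \bar{j}} = O(1)$. Third — and this is where I use that $\omega_{\varphi}$ is genuinely \kah, hence $d\omega_{\varphi} = 0$ on $X \setminus D$ — one has $d \tilde{\omega}_{\varphi} = - d \left( (g_{\varphi})_{1 \bar{1}} \ai dz_1 \wedge d\bar{z}_1 \right)$, which is cleaner than the elementary case; differentiating and discarding $dz_1, d\bar{z}_1$ factors killed by the wedge leaves only base-direction derivatives of $(g_{\varphi})_{1\bar1}$, giving the bound $O(|z_1|^{2\beta-2})$ exactly as in (\ref{probfthree}).

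With these in hand the vanishing of the $df$-term proceeds through the three estimates (\ref{probfone}), (\ref{probftwo}), (\ref{probffthree}): rewriting $d^c\log|z_1|^2 \wedge df = -d\log|z_1|^2 \wedge d^c f$ and integrating by parts as in (\ref{bounv}), the boundary term is $O((\epsilon^{2\beta}+\epsilon)|\log\epsilon|) \to 0$ using the restriction (\ref{estomtilde}) adapted via Lemma \ref{lemestgmc}, the second term is controlled by $\cst . |\int_0^\epsilon r^{2\beta-1}\log r\, dr| \to 0$ since $\Delta_{\omega_{\varphi}} f$ is bounded by Lemma \ref{lemginvmc}, and the third term is estimated identically using the $d\tilde{\omega}_{\varphi}$ bound above. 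These all go through because the orders of magnitude produced by the momentum profile match those of the elementary form.

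The surviving boundary term is where the answer $p^*\omega_M(b)$ appears, and verifying this is the main obstacle. On $\partial V_\epsilon = \{|z_1| = \epsilon\}$ one has $d^c \log|z_1|^2 = 2 d\theta$ and $dz_1 = \ai z_1\, d\theta$, so both $(g_\varphi)_{1\bar1}\,dz_1\wedge d\bar z_1$ and the mixed $dz_1 \wedge d\bar{z}_j$ terms of $\tilde\omega_\varphi^{n-1}$ carry a factor $d\theta$ and are annihilated against the explicit $2d\theta$; only the pure base part $\left(p^*\omega_M(\tau)\right)^{n-1}$ survives. Since $\tau \to b$ as $z_1 \to 0$ and $\omega_M(\tau) \to \omega_M(b)$ (cf.~(\ref{defofomegamtau})) while the fibrewise-volume part $\frac{1}{\varphi} d\tau \wedge d^c\tau$ contributes to the base directions only at order $O(|z_1|^{2\beta}) \to 0$ by (\ref{estdtau})–(\ref{volfmc}), I would conclude $\lim_{\epsilon \to 0} \tilde{\omega}_{\varphi}|_{\partial V_\epsilon}$ restricts to $p^*\omega_M(b)$ on $\{z_1 = 0\}$. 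Hence $\lim_{\epsilon\to0}\int_{\partial V_\epsilon} 2 d\theta \wedge f \tilde{\omega}_{\varphi}^{n-1} = \int_0^{2\pi} 2\, d\theta \int_{\{z_1=0\}} f\, p^*\omega_M(b)^{n-1} = 4\pi \int_{\{z_1=0\}} f\, p^*\omega_M(b)^{n-1}$, and the factor $\frac{1}{2}$ yields the claimed identity. The delicate point throughout is bookkeeping which differential factors are proportional to $d\theta$ on the shrinking tube so that precisely the base form $p^*\omega_M(b)$ is isolated in the limit.
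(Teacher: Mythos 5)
Your proposal is correct and follows essentially the same route as the paper: localise, integrate by parts as in (\ref{distpart}), pass to a modified form $\tilde{\omega}_{\varphi}$ with the $dz_1 \wedge d\bar{z}_1$ component removed, verify the analogues of (\ref{probfone}), (\ref{probftwo}), (\ref{probfthree}) using Lemmas \ref{lemestgmc} and \ref{lemginvmc} together with (\ref{estdtau})--(\ref{volfmc}), and identify the surviving boundary term with $p^*\omega_M(b)^{n-1}$ via $\tau \to b$. The only cosmetic difference is that you subtract the full coefficient $(g_{\varphi})_{1\bar{1}}$ while the paper subtracts only the leading term $\frac{2A_0^2B_1^2\beta^2|z_1|^{4\beta-2}}{\varphi}$ of the fibre part; both choices kill the relevant wedge products and yield the same estimates.
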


\begin{proof}
The proof is essentially the same as the one for Lemma \ref{termtwo}. We note that we can proceed almost word by word, except for the places where we used the explicit description of $\hat{\omega}$ and $\tilde{\omega}$: the estimates (\ref{probfone}), (\ref{probftwo}), and in estimating (\ref{probftthree}). 

We certainly need to define a differential form, say $\tilde{\omega}_{\varphi}$, which replaces $\tilde{\omega}$ in the proof of Lemma \ref{termtwo}. We define it as $\tilde{\omega}_{\varphi} := \omega_{\varphi}  -  \frac{2 A_0^2 B_1^2 \beta^2 |z_1|^{4 \beta-2}  }{\varphi}  \ai d z_1 \wedge d \bar{z}_1$, by recalling the estimate (\ref{estdtau}).

Note again that this is not necessarily closed, and also that $\tilde{\omega}_{\varphi}$ does not even define a metric, since it is degenerate in the $dz_1 \wedge d \bar{z}_1$-component, whereas we certainly have $\tilde{\omega}_{\varphi} \le \cst . \omega_{\varphi}$. Observe that (\ref{estdtau}) and $\varphi = O(|z_1|^{2 \beta}) $ (as proved in Lemma \ref{lemestgmc}) imply that
\begin{align}
&\tilde{\omega}_{\varphi} |_{\partial V_{\epsilon}} \notag \\
&= \omega_{\varphi} |_{\partial V_{\epsilon}} + \left. \frac{1}{\varphi} \left( 2 \beta |z_1|^{4 \beta-2} \bar{z_1}  A_0 B_1 \sum_{i=2}^{n} \overline{B_{2,i}}  \ai d z_1 \wedge d \bar{z}_i  + c.c. + O(|z_1|^{4 \beta}) \right) \right|_{\partial V_{\epsilon}} \notag \\
&=  \omega_{\varphi} |_{\partial V_{\epsilon}} + O(\epsilon^{2 \beta}) , \label{esttilommc1}
\end{align}
which replaces (\ref{estomtilde}) in the proof of Lemma \ref{termtwo}. Note also that, by recalling (\ref{estdtau}),
\begin{align}
&\omega_{\varphi} |_{\partial V_{\epsilon}} = \left. \left( p^* \omega_M (\tau) + \frac{1}{\varphi} d \tau \wedge d^c \tau \right) \right|_{\partial V_{\epsilon}} \notag \\
&=p^* \omega_M (\tau) |_{\partial V_{\epsilon}} + \left. \frac{1}{\varphi} \left( - 2 \epsilon^{4 \beta} \beta   A_0 B_1 \sum_{i=2}^{n} \overline{B_{2,i}} d \theta \wedge d \bar{z}_i + c.c. + O(\epsilon^{4 \beta}) \right) \right|_{\partial V_{\epsilon}}  \label{esttilommc2}
\end{align}
where we wrote $z_1 = \epsilon e^{\ai \theta}$ on $\partial V_{\epsilon} = \{ |z_1| = \epsilon \}$ and used $d z_1 = \epsilon \ai e^{\ai \theta} d \theta$. Thus, recalling $\varphi = O(|z_1|^{2 \beta})$, $\omega_M (\tau) \le \cst . \omega_M$, and that $\omega_M$ depends only on $(z_2 , \dots , z_n)$, i.e.~the coordinates on the base $M$, we have the estimate
\begin{equation}
\omega_{\varphi} |_{\partial V_{\epsilon}} \le \cst . \left. \left( \sum_{i,j \neq 1} \ai dz_i \wedge d \bar{z}_j + \epsilon^{2 \beta} \sum_{j=2}^n \ai d \theta \wedge d z_j + c.c. \right) \right|_{\partial V_{\epsilon}} \label{estmcboun}
\end{equation}
from which it follows that
\begin{align}
&\left| \int_{\partial V_{\epsilon}} \log |z_1|^2 d^c f \wedge \omega_{\varphi}^{n-1} \right| \notag \\
&\le  \cst . \log \epsilon \left| \int_{\partial V_{\epsilon}} (\epsilon^{2\beta} + \epsilon) d \theta \wedge dz_2 \wedge d \bar{z}_2 \wedge \dots dz_n \wedge d\bar{z}_n \right|  \notag \\
&\le \cst . \epsilon^{2 \beta} \log \epsilon \to 0 \label{estbtmcttwo}
\end{align}
as $\epsilon \to 0$, for any smooth $f \in C^{\infty} (\mathbb{X} , \rl)$. This means that the estimate (\ref{probfone}) in the proof of Lemma \ref{termtwo} is still valid for momentum-constructed metrics $\omega_{\varphi}$.


Also, Lemma \ref{lemginvmc} and the estimate (\ref{volfmc}) (and also $\tilde{\omega}_{\varphi} \le \cst . \omega_{\varphi}$) means that the estimate in (\ref{probftwo}) in the proof of Lemma \ref{termtwo} is still valid for momentum-constructed metrics $\omega_{\varphi}$.

We are thus reduced to estimating (\ref{probftthree}), which is the third term of (\ref{bounv}) in the proof of Lemma \ref{termtwo}. We first note
\begin{align*}
&d \left( \frac{2 A_0^2 B_1^2 \beta^2 |z_1|^{4 \beta-2} }{\varphi}  \ai d z_1 \wedge d \bar{z}_1 \right) \\
&= \sum_{i=2}^n \frac{\partial}{\partial z_i} \left( \frac{2 A_0^2 B_1^2 \beta^2 |z_1|^{4 \beta-2} }{\varphi} \right) \ai dz_1 \wedge d \bar{z}_1 \wedge d z_i + c.c.
\end{align*}
Recalling the estimate (\ref{estmcboun}) and $\tilde{\omega}_{\varphi} \le \cst . \omega_{\varphi}$, we thus have, by using a smooth reference metric $\omega_0$ on $X$,
\begin{equation}
\left| d^c f \wedge d \left( \frac{2 A_0^2 B_1^2 \beta^2 |z_1|^{4 \beta-2} }{\varphi}  \ai d z_1 \wedge d \bar{z}_1 \right) \wedge \tilde{\omega}_{\varphi}^{n-2} \right|_{\omega_0}  \le \cst . |z_1|^{2 \beta -2}, \label{estfprimexc}
\end{equation}
where in the last estimate we used the fact that $f$ is smooth and that $\varphi$ is of order $O(|z_1|^{2 \beta})$ (cf.~Lemma \ref{lemestgmc}). This replaces (\ref{probfthree}) in the proof of Lemma \ref{termtwo}, and hence we see that the estimate (\ref{probffthree}) is still valid for the momentum-constructed metrics, establishing that the third term of (\ref{bounv}) in the proof of Lemma \ref{termtwo} goes to 0 as $\epsilon \to 0$. Since all the other arguments in the proof of Lemma \ref{termtwo} do not need the estimates that use the specific properties of $\hat{\omega}$, and hence applies word by word to the momentum-constructed case, we finally have
\begin{align*}
&\lim_{\epsilon \to 0} \int_{\partial V_{\epsilon}} d^c \log |z_1|^2 \wedge f \tilde{\omega}_{\varphi}^{n-1} =  \lim_{\epsilon \to 0} \int_{\partial V_{\epsilon}} 2 d\theta \wedge f \tilde{\omega}_{\varphi}^{n-1} \\
&= \int_{0}^{2 \pi} 2 d \theta \int_{\{z_1 =0 \}} f p^* \omega_M (b)^{n-1} =4\pi \int_{\{ z_1=0 \}} f p^* \omega_M (b)^{n-1},
\end{align*}
where we used $\tilde{\omega}_{\varphi}^{n-1} |_D = \omega^{n-1}_{\varphi} |_D = p^* \omega_M (b)^{n-1}$ by recalling (\ref{esttilommc1}), (\ref{esttilommc2}) and $D =\{ z_1=0 \} = \{ \tau =b \}$. We can thus conclude, as in Lemma \ref{termtwo}, that
\begin{align*}
 &\int_{U} \ai \ddbar \log |z_1|^2 \wedge f \omega_{\varphi}^{n-1} \\
 &= \lim_{\epsilon \to 0} \int_{V_{\epsilon}} \ai \ddbar \log |z_1|^2 \wedge f \omega_{\varphi}^{n-1}  = 2\pi \int_{\{ z_1 =0 \}} f p^* \omega_M(b)^{n-1},
\end{align*}
to get the claimed result.
\end{proof}


\subsection{Computation of Futaki invariant with respect to the conically singular metrics} \label{lfinvcwercsm}

\subsubsection{Statement of the result}

The aim of this section is to prove the following corollary of Theorems \ref{scaldist} and \ref{scaldistmc}, which computes the Futaki invariant of conically singular metrics on the whole manifold. This result will be used in \S \ref{sipftcsmotf} to prove Theorem \ref{mtinvfutthird}.

\begin{corollary} \label{intfutakisingth}
\
\begin{enumerate}
\item Suppose that $\Xi$ is a holomorphic vector field on $X$ which preserves $D$. Write $H$ for the holomorphy potential of $\Xi$ with respect to $\omega$, and $\hat{H} $ for the one with respect to a conically singular metric of elementary form $\hat{\omega}$ with $0 < \beta <1$. Then we have
\begin{align*}
\textup{Fut} (\Xi , \hat{\omega}) = &\int_{X \setminus D} \hat{H} (S (\hat{\omega}) - \underline{S}(\hat{\omega})) \frac{\hat{\omega}^n}{n!}  \notag \\
&+ 2 \pi (1 - \beta) \left( \int_D H \frac{\omega^{n-1}}{(n-1)!} -   \frac{\textup{Vol} (D , \omega)}{\textup{Vol} (X , \hat{\omega})}\int_X \hat{H} \frac{\hat{\omega}^n}{n!} \right), 
\end{align*}

where $\underline{S} (\hat{\omega})$ is the average of $S(\hat{\omega})$ over $X \setminus D$ and all the integrals are finite.
\item Writing $\Xi$ for the generator of the fibrewise $\cx^*$-action on $\mathbb{X} = \prj (\mathcal{F} \oplus \cx)$, and $\tau$ for the holomorphy potential with respect to a momentum-constructed conically singular metric $\omega_{\varphi}$ with $0 < \beta <1$, we have
\begin{align*}
\textup{Fut} (\Xi , \omega_{\varphi} ) = &\int_{\mathbb{X} \setminus D} \tau (S (\omega_{\varphi}) - \underline{S}(\omega_{\varphi})) \frac{\omega_{\varphi}^n}{n!} \\
&+ 2 \pi (1 - \beta) \left(    b \mathrm{Vol} (M , \omega_M (b))- \frac{\textup{Vol} (M ,  \omega_M (b))}{\textup{Vol} (\mathbb{X} , \omega_{\varphi})}\int_{\mathbb{X}} \tau \frac{\omega_{\varphi}^n}{n!} \right) ,
\end{align*}
where $D$ is the $\infty$-section defined by $\tau =b$, and $\omega_M (b)$ is as defined in (\ref{defofomegamtau}); see \S \ref{bgmc}. All the integrals in the above are finite.
\end{enumerate}
\end{corollary}

\subsubsection{Proof of the first item of Corollary \ref{intfutakisingth}} \label{futinvcmpcsmtf}
We first consider the conically singular metric of elementary form $\hat{\omega} = \omega + \lambda \ai \ddbar |s|^{2 \beta}_h$. Suppose now that $\Xi$ is a holomorphic vector field with the holomorphy potential $H \in C^{\infty} (X ,\cx)$, with respect to $\omega$, so that $\iota(\Xi) \omega = - \bar{\partial} H$. The holomorphy potential of $\Xi$ with respect to $\hat{\omega}$ is given by $H - \lambda \ai \Xi(|s|_h^{2 \beta})$, since, writing $\Xi = \sum_{i=1}^n v^i \frac{\partial}{\partial z_i}$ with $\bar{\partial} v^i=0$ in terms of local holomorphic coordinates $(z_1 , \dots , z_n)$, we have (cf.~\cite[Lemma 4.10]{sze})
\begin{equation}
 \iota \left( v^i \frac{\partial}{\partial z_i} \right) \ai \ddbar |s|_h^{2 \beta} = \ai v^i \frac{\partial^2 |s|_h^{2 \beta}}{\partial z^i \partial \bar{z}^j} d \bar{z}^j = \bar{\partial} \left( \ai  v^i \frac{\partial |s|_h^{2 \beta}}{\partial z^i} \right) . \label{varhamhomega}
\end{equation}

Suppose we write $|s|_h^{2 \beta} = e^{\beta \phi} |z_1|^{2 \beta}$ in local coordinates on $U$, where $h = e^{\phi}$ for some function $\phi$ that is smooth on the closure of $U$. We now wish to evaluate $\Xi (e^{\beta \phi} |z_1|^{2 \beta})$. If we assume that $\Xi$ preserves the divisor $D = \{ z_1 =0 \} $, we need to have $\Xi |_D =  \sum_{i=2}^n v^i \frac{\partial}{\partial z_i}$, and so $v^1$ has to be a holomorphic function that vanishes on $\{ z_1 =0 \}$. This means that we can write $v^1 = z_1 v'$ for another holomorphic function $v'$. We thus see that $\Xi (e^{ \beta \phi} |z_1|^{2 \beta}) = \sum_{i=1}^n v^i \partial_i (e^{\beta \phi} |z_1|^{2 \beta})$ is of order $|z_1|^{2 \beta}$ near $D$. We thus obtain that, for a holomorphic vector field $\Xi$ preserving $D$, there exists a ($\cx$-valued) function $H'$ that is smooth on $X \setminus D$ and is of order $|z_1|^{2 \beta}$ near $D$ and satisfies
\begin{equation} \label{holpotcsmtf}
\iota(\Xi) \hat{\omega} = - \bar{\partial} (H + H') ,
\end{equation}
i.e. $\hat{H}: = H+H'$ is the holomorphy potential of $\Xi$ with respect to $\hat{\omega}$.

We wish to extend Theorem \ref{scaldist} to the case when $f$ is replaced by the holomorphy potential $\hat{H}$ of a holomorphic vector field $\Xi$ with respect to $\hat{\omega}$. This means that we need to extend Theorem \ref{scaldist} to functions $f'$ that are not necessarily smooth on the whole of $X$ but merely smooth on $X \setminus D$ and are asymptotically of order $O( |z_1|^{2 \beta})$ near $D$. Note that most of the proof carries over word by word when we replace $f$ by such $f'$, except for the place where we showed $ \lim_{\epsilon \to 0}  \int_{U \setminus U_{\epsilon}}  d^c \log |z_1|^2 \wedge df \wedge \hat{\omega}^{n-1} =0$ in the equation (\ref{distpart}) when we proved Lemma \ref{termtwo}. More specifically, the smoothness of $f$ was crucial in the estimates (\ref{probfone}), (\ref{probftwo}), and (\ref{probfthree}) but not anywhere else. Thus the Lemma \ref{termtwo} still applies to $f'$ if we can prove the estimates used in (\ref{probfone}), (\ref{probftwo}), and (\ref{probfthree}) for $f'$. Note that we may still assume that $f'$ is compactly supported on $U$, since this is the property coming from applying the partition of unity.

For (\ref{probfone}), note first that on $\partial V_{\epsilon}$, 
\begin{equation*}
| d^c f' |_{\omega} \le \cst . \left| \epsilon (\partial_1 f ) d \theta + \sum_{i=2}^n (\partial_i f') + c.c. \right|_{\omega} = O(\epsilon^{2\beta})
\end{equation*}
by noting that $dz_1 = \ai \epsilon e^{\ai \theta} d \theta$ on $\partial V_{\epsilon}$. Thus we have
\begin{align}
&\left| \int_{\partial V_{\epsilon}} \log |z_1|^2 d^c f' \wedge \tilde{\omega}^{n-1} \right| \notag \\
&\le  \cst . \epsilon^{2 \beta} \log \epsilon \left| \int_{\partial V_{\epsilon}} (\epsilon^{2\beta} + \epsilon) d \theta \wedge dz_2 \wedge d \bar{z}_2 \wedge \dots dz_n \wedge d\bar{z}_n \right| \notag \\
&\le \cst . \epsilon^{4 \beta} \log \epsilon \to 0 \label{estbtmcttwo1}
\end{align}
in place of (\ref{probfone}).

For (\ref{probftwo}), we need to estimate $\Delta_{\hat{\omega}} f'$, but we simply recall Lemma \ref{lemginv} and see that $\Delta_{\hat{\omega}} f'$ is bounded on the whole of $U$. Thus the estimate established in (\ref{probftwo})
\begin{equation} \label{probftwoprm}
\left| \int_{V_{\epsilon}}   \log |z_1|^2  dd^c f' \wedge \tilde{\omega}^{n-1} \right| \le \cst . \left| \int_{V_{\epsilon}}   \log r^2  \hat{\omega}^{n} \right|
\end{equation}
still holds for $f'$.

We are left to verify that the estimate (\ref{probfthree}) holds for $f'$. We remark that, in computing (\ref{probfthree}), we may replace $d^cf$ with $\ai \sum_{j=2}^n ( \partial_{\bar{j}} f d \bar{z}_j- \partial_j f dz_j )$, since any term proportionate to $dz_1$ or $d \bar{z}_1$ will vanish when wedged with $d \left( \frac{\partial^2}{\partial z_1 \partial \bar{z}_1} (e^{\phi} |z_1|^{2 \beta}) dz_1 
\wedge d \bar{z}_1 \right)$. Thus, since $\partial_{\bar{j}} f'$ and $\partial_j f' $ ($2 \le j \le n$) are of order $O(r^{2 \beta})$, we have
\begin{equation} \label{estfprimexc1}
\left| d^c f' \wedge  d\left(  \frac{\partial^2}{\partial z_1 \partial \bar{z}_1} (e^{\phi} |z_1|^{2 \beta}) dz_1 \wedge d \bar{z}_1 \right) \wedge {\omega}^{n-2} \right|_{\omega} \le \cst . |z_1|^{4 \beta -2}
\end{equation}
in place of (\ref{probfthree}), so that the conclusion (\ref{probffthree}) still holds.

Thus the proof of Lemma \ref{termtwo} carries over to $f'$. Noting that $f'$ vanishes on $D$, we have $ \int_{U}f'   \ai \ddbar \log |z_1|^2 \wedge \hat{\omega}^{n-1} = 0$. In particular, if $\Xi$ is a holomorphic vector field on $X$ that preserves $D$ whose holomorphy potential with respect to $\omega$ (resp.~$\hat{\omega}$) is $H$ (resp.~$\hat{H}:=H + H'$), we get
\begin{equation*}
\int_X \hat{H} \mathrm{Ric} (\hat{\omega}) \wedge \frac{\hat{\omega}^{n-1}}{(n-1)!} = \int_{X \setminus D} \hat{H} S (\hat{\omega}) \frac{\hat{\omega}^n}{n!} + 2  \pi (1 - \beta) \int_D H \frac{\omega^{n-1}}{(n-1)!}.
\end{equation*}
Combined with Remark \ref{averscaldxd}, we thus get the first item of Corollary \ref{intfutakisingth}.

\subsubsection{Proof of the second item of Corollary \ref{intfutakisingth}} \label{mccsmesohpndfut}

We now consider the momentum-constructed conically singular metrics $\omega_{\varphi}$ and the generator $\Xi$ of the fibrewise $\cx^*$-action that has $\tau$ as its holomorphy potential (see the argument at the beginning of \S \ref{logfutmc}). Recalling that $\tau - b$ is of order $O(|z_1|^{2 \beta})$, as we proved in Lemma \ref{lemestgmc}, we are thus reduced to establishing the analogue for $\omega_{\varphi}$ of the statement that we proved in \S \ref{futinvcmpcsmtf} for the conically singular metric of elementary form $\hat{\omega}$. In fact, the proof carries over word by word, where we only have to replace $\tilde{\omega}$ by $\tilde{\omega}_{\varphi}$ (cf.~the proof of Lemma \ref{termtwomc}); (\ref{estbtmcttwo}) is replaced by the analogue of (\ref{estbtmcttwo1}), $\Delta_{\omega_{\varphi}} f'$ is bounded by Lemma \ref{lemginvmc} to establish the analogue of (\ref{probftwoprm}), and (\ref{estfprimexc}) can be established by observing that we can replace $d^c f'$ by $\ai \sum_{j=2}^n (\partial_{\bar{j}} f' d \bar{z}_{j} - \partial_j f' dz_j)$, as we did in (\ref{estfprimexc1}).

Thus, arguing exactly as in \S \ref{futinvcmpcsmtf}, we get the second item of Corollary \ref{intfutakisingth}.

\section{Some invariance properties for the log Futaki invariant} \label{sipftcsmotf}

\subsection{Invariance of volume and the average of holomorphy potential for conically singular metrics of elementary form} \label{invvhamcstf}

We first specialise to the conically singular metric of elementary form $\hat{\omega}$. Momentum-constructed conically singular metrics will be discussed in \S \ref{iotlficmccsmss}.

We recall that the volume $\text{Vol} (X ,\hat{\omega})$ or the average of the integral $\int_X \hat{H} \frac{\hat{\omega}^n}{n!} $ is not necessarily a invariant of the \kah class, unlike in the smooth case. This is because, as we mentioned in  \S \ref{intscmsfi}, the singularities of $\hat{\omega}$ mean that we have to work on the noncompact manifold $X \setminus D$, on which we cannot naively use the integration by parts. The aim of this section is to find some conditions under which the boundary integrals vanish, as in the smooth case. We first prove the following lemma.  

\begin{lemma} \label{volinvcstf}
The volume $\textup{Vol}(X, \hat{\omega})$ of $X$ measured by a conically singular metric with cone angle $ 2 \pi \beta$ of elementary form $\hat{\omega} = \omega + \lambda \ai \ddbar |s|_h^{2 \beta}$ with $\omega \in c_1 (L)$ is equal to the cohomological $\int_X c_1 (L)^n /n!$ if $\beta > 0$.
\end{lemma}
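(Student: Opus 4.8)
The plan is to show directly that the two volume forms $\hat{\omega}^n$ and $\omega^n$ have the same integral over $X\setminus D$, by proving that their difference is exact there with a boundary contribution that vanishes in the limit. Writing $u := |s|^{2\beta}_h$, so that $\hat{\omega} = \omega + \tfrac{\lambda}{2}\,d d^c u$ (recall $\ai\ddbar = \tfrac12 d d^c$ with $d^c = \ai(\bar{\partial} - \partial)$), I would first use the telescoping identity
$$\hat{\omega}^n - \omega^n = (\hat{\omega}-\omega)\wedge\Theta = \tfrac{\lambda}{2}\,d d^c u \wedge \Theta, \qquad \Theta := \sum_{j=0}^{n-1}\hat{\omega}^{\,j}\wedge\omega^{\,n-1-j}.$$
Since $\omega$ is closed and $\hat{\omega}$ is closed on $X\setminus D$, the form $\Theta$ is closed there, whence $\hat{\omega}^n - \omega^n = d\bigl(\tfrac{\lambda}{2}\,d^c u\wedge\Theta\bigr)$ on $X\setminus D$. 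Both $\hat{\omega}^n$ (by Remark \ref{remvolcs}) and $\omega^n$ are integrable on $X$, so $\int_{X\setminus D}(\hat{\omega}^n - \omega^n)$ exists and equals $\lim_{\epsilon\to 0}\int_{X\setminus N_\epsilon}(\hat{\omega}^n - \omega^n)$, where $N_\epsilon$ is a tubular neighbourhood of $D$ modelled locally on $\{|z_1| < \epsilon\}$.

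Next I would apply Stokes' theorem on the smooth manifold-with-boundary $X\setminus N_\epsilon$ to obtain $\int_{X\setminus N_\epsilon}(\hat{\omega}^n-\omega^n) = \tfrac{\lambda}{2}\int_{\partial N_\epsilon} d^c u\wedge\Theta$, reducing everything to showing that this boundary integral tends to $0$. Here I would work in the local coordinates of \S \ref{eleestcstf}, with $z_1 = \epsilon e^{\ai\theta}$ on $\partial N_\epsilon = \{|z_1| = \epsilon\}$, using $u = O(|z_1|^{2\beta})$ together with the estimates on the entries $\hat{g}_{i\bar{j}}$ recorded there. The crucial observation is that the most singular entry $\hat{g}_{1\bar{1}} = O(|z_1|^{2\beta-2})$ never contributes on the boundary: its term $dz_1\wedge d\bar{z}_1$ pulls back to zero on $\{|z_1|=\epsilon\}$, because $dz_1 = \ai\epsilon e^{\ai\theta}d\theta$ and $d\bar{z}_1 = -\ai\epsilon e^{-\ai\theta}d\theta$ are proportional there. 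Consequently $\Theta|_{\partial N_\epsilon}$ is bounded (its base part is $O(1)$, while the mixed entries give $\hat{g}_{1\bar{j}}\,dz_1\wedge d\bar{z}_j = O(|z_1|^{2\beta-1})\cdot O(\epsilon) = O(\epsilon^{2\beta})$), and $d^c u|_{\partial N_\epsilon} = O(\epsilon^{2\beta})$ (the $z_1$-part gives $\partial_1 u\,dz_1 = O(\epsilon^{2\beta-1})\cdot O(\epsilon)\,d\theta$, and the base part is $O(\epsilon^{2\beta})$). Integrating the resulting $(2n-1)$-form $d^c u\wedge\Theta = O(\epsilon^{2\beta})$ over $\partial N_\epsilon$, whose $d\theta$-integral contributes only a factor $2\pi$ and whose base is compact, yields $\bigl|\int_{\partial N_\epsilon}d^c u\wedge\Theta\bigr| \le \cst\cdot\epsilon^{2\beta}$.

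Letting $\epsilon\to 0$ and using $\beta > 0$ then gives $\int_{X\setminus D}(\hat{\omega}^n - \omega^n) = 0$, so $\textup{Vol}(X,\hat{\omega}) = \int_X \omega^n/n! = \int_X c_1(L)^n/n!$, the last equality holding since $\omega\in c_1(L)$ is smooth. I expect the only genuine obstacle to be the boundary estimate: one must verify carefully that the singular $(1,\bar{1})$-component of $\hat{\omega}$ really drops out on $\partial N_\epsilon$ and that every surviving term carries a positive power $\epsilon^{2\beta}$, which is exactly where $\beta > 0$ is indispensable (at $\beta = 0$ the boundary term would not decay). The local-to-global passage is routine, handled as in the proofs of Theorems \ref{scaldist} and \ref{scaldistmc} via a partition of unity over a compact tubular neighbourhood of $D$, so I would not belabour it.
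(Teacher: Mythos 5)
Your proof is correct, and it takes a genuinely different (if closely related) route from the paper's. The paper deforms along the path $\hat{\omega}_t = \omega + t\ai\ddbar|s|^{2\beta}_h$, differentiates the volume in $t$ (justifying the interchange of $d/dt$ and the integral by dominated convergence), and shows $\int_{X\setminus D}\Delta_{\hat{\omega}_T}|s|^{2\beta}_h\,\hat{\omega}_T^n = 0$ for every $T\in[0,\lambda]$, treating $T=0$ and $T>0$ separately and splitting each integral into a small annulus around $D$ (estimated directly using the integrability of $r^{2\beta-1}$) plus a complement handled by Stokes. Your telescoping identity $\hat{\omega}^n-\omega^n = \tfrac{\lambda}{2}\,d\bigl(d^c u\wedge\Theta\bigr)$ with $\Theta=\sum_j\hat{\omega}^{\,j}\wedge\omega^{\,n-1-j}$ closed on $X\setminus D$ collapses all of this into a single application of Stokes, avoiding the path of metrics, the differentiation under the integral sign, and the $T=0$ versus $T>0$ case distinction; it also has the minor advantage that $d^c u\wedge\Theta$ is globally defined on $X\setminus D$, so no partition of unity is needed before integrating by parts. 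The analytic core is the same in both arguments: $dz_1\wedge d\bar{z}_1$ pulls back to zero on $\{|z_1|=\epsilon\}$, so the worst entry $\hat{g}_{1\bar{1}}=O(|z_1|^{2\beta-2})$ never reaches the boundary integral, and every surviving term carries a factor $O(\epsilon^{2\beta})$ (or $O(\epsilon)$), which is exactly where $\beta>0$ enters. Your version is the cleaner of the two for this particular lemma, though the paper's path method is the one that generalises directly to Lemma \ref{holinvcstf}, where the integrand $\hat{H}_t\hat{\omega}_t^n$ genuinely depends on $t$ through the varying holomorphy potential and a one-shot algebraic identity is not available.
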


\begin{proof}
Consider a path of metrics $\{ \hat{\omega}_t := \omega + t \ai \ddbar |s|^{2 \beta}_h \}$ defined for $0 \le t \le \lambda$ for sufficiently small $\lambda >0$, and write $\hat{g}_t$ for the metric corresponding to $\hat{\omega}_t$, with $g := \hat{g}_0$. Then we have $\left. \frac{d}{dt} \right|_{t = T} \hat{\omega}^n_t =  n \ai \ddbar |s|^{2 \beta}_h \wedge \hat{\omega}_T^{n-1} =\Delta_{T} |s|^{2 \beta}_h \hat{\omega}_T^n  $, where $\Delta_T$ is the (negative $\bar{\partial}$) Laplacian with respect to $\hat{\omega}_T$. If we show that $\left. \frac{d}{dt} \right|_{t = T} \int_X  \hat{\omega}^n_t= \left. \frac{d}{dt} \right|_{t = T} \int_{X \setminus D}  \hat{\omega}^n_t = \int_{X \setminus D} \left. \frac{d}{dt} \right|_{t = T}  \hat{\omega}^n_t=0$ for any $0 \le T \le \lambda \ll 1$ (where we used the Lebesgue convergence theorem in the second equality), then we will have proved $\text{Vol} (X , \hat{\omega}_T) = \text{Vol} (X , \omega) = \int_X c_1(L)^n /n!$. We thus compute $\int_{X \setminus D} \Delta_T |s|^{2 \beta}_h \hat{\omega}_T^n $ for any $0 \le T \le \lambda$. We treat the case $T =0$ and $T \neq 0$ separately. Note that in both cases, we may reduce to a local computation on $U \subset X$ by applying the partition of unity as we did in the proof of Theorems \ref{scaldist} and \ref{scaldistmc}.

First assume $T=0$. We now choose local holomorphic coordinates $(z_1 , \dots , z_n)$ on $U$ so that $D = \{ z_1 = 0 \}$. Writing $z_1 = re^{\ai \theta}$, we define a local $C^{\infty}$-tubular neighbourhood $D_{\epsilon}$ around $D = \{ z_1 = 0 \}$ by $D_{\epsilon} := \{ x \in X \mid |s|_h (x) \le \epsilon \}$. 
Then we have
\begin{align*}
\int_{U \setminus D} \Delta_{\omega} |s|^{2 \beta}_h \omega^n &= \int_{U \setminus D_{\epsilon}} \Delta_{\omega} |s|^{2 \beta}_h \omega^n + \int_{D_{\epsilon} \setminus D} \Delta_{\omega} |s|^{2 \beta}_h \omega^n  \\
&= \int_{U \setminus D_{\epsilon}} \Delta_{\omega} |s|^{2 \beta}_h \omega^n + \int_{D_{\epsilon} \setminus D}  \sum_{i,j} g^{ i \bar{j}} \frac{\partial^2 }{\partial z_i \partial \bar{z}_j} |s|_h^{2 \beta} \omega^n.
\end{align*}
Writing $r = |z_1|$ and noting that $ |s|_h = fr$ for some locally defined smooth bounded function $f$, we can evaluate $\left| \sum_{i,j} g^{ i \bar{j}} \frac{\partial^2 }{\partial z_i \partial \bar{z}_j} |s|_h^{2 \beta} \right| \le \cst . (r^{2 \beta -2} + r^{2 \beta -1} + r^{2\beta}) $. Thus
\begin{equation*}
\left| \int_{D_{\epsilon} \setminus D} \sum_{i,j} g^{ i \bar{j}} \frac{\partial^2 }{\partial z_i \partial \bar{z}_j} |s|_h^{2 \beta} \omega^n \right| \le \cst . \int_0^{\epsilon} (r^{2 \beta -2} + r^{2 \beta -1} + r^{2\beta})rdr  \to 0
\end{equation*}
as $\epsilon \to 0$, if $\beta > 0$.

We thus have to show that $\int_{U \setminus D_{\epsilon}}$ goes to 0 as $\epsilon \to 0$. Note that this is reduced to the boundary integral on $\partial D_{\epsilon}$ by the Stokes theorem (by recalling that we have been assuming $|s|_h^{2 \beta}$ is compactly supported in $U$ as a consequence of applying the partition of unity) as $\int_{U \setminus D_{\epsilon}} \Delta_{\omega} |s|^{2 \beta}_h \omega^n  = \int_{\partial D_{\epsilon}} n \ai \bar{\partial} |s|^{2 \beta}_h  \wedge \omega^{n-1}$. 
Recalling $d \bar{z}_1 \mid_{|z_1| = r} = (- \ai \cos \theta - \sin \theta) r d \theta$, we may write 
\begin{equation*}
\left. \bar{\partial} |s|^{2 \beta}_h  \wedge \omega^{n-1} \right|_{\partial D_{\epsilon}} = \frac{\partial |s|^{2 \beta }_h}{ \partial \bar{z}_1} F \epsilon d \theta \wedge \ai d z_2 \wedge d\bar{z}_2 \wedge \dots \wedge \ai d z_n \wedge d \bar{z}_n
\end{equation*}
with some smooth function $F$, in the local coordinates $(z_1 , \dots , z_n)$. We thus have $\left|  \int_{\partial D_{\epsilon}} n \ai \bar{\partial} |s|^{2 \beta}_h  \wedge \omega^{n-1} \right| \le \cst . \epsilon^{2 \beta -1} \epsilon \to 0$ as $\epsilon \to 0$, if $\beta >0$.

When $T >0$, note that $\Delta_T |s|^{2 \beta}_h = O(1)$ by Lemma \ref{lemginv}. By Lemma \ref{lemvoltyp}, we have $\hat{\omega}_T^n = O(r^{2 \beta -1})$, which shows that $\left| \int_{D_{\epsilon} \setminus D}  \sum_{i,j} \hat{g}_{T}^{ i \bar{j}} \frac{\partial^2 }{\partial z_i \partial \bar{z}_j} |s|_h^{2 \beta} \hat{\omega}_T^n \right| \le \cst . \int_0^{\epsilon} r^{2 \beta -1} dr \to 0$ as $\epsilon \to 0$. We are thus reduced to showing that the boundary integral $\int_{U \setminus D_{\epsilon}} \Delta_{T} |s|^{2 \beta}_h \hat{\omega}_T^n = \int_{\partial D_{\epsilon}} n \ai \bar{\partial} |s|^{2 \beta}_h  \wedge \hat{\omega}_T^{n-1}$ 
goes to 0 as $\epsilon \to 0$. We first evaluate $\int_{\partial D_{\epsilon}} n \ai \frac{\partial |s|^{2 \beta}_h}{\partial \bar{z}_1} d \bar{z}_1 \wedge \hat{\omega}_T^{n-1}$. By noting $d z_1 \wedge d \bar{z}_1=0$ on $\partial D_{\epsilon}$, we observe that $d \bar{z}_1 \wedge \hat{\omega}_T^{n-1} |_{\partial D_{\epsilon}}= F \epsilon d \theta \wedge \ai d z_2 \wedge d \bar{z}_2 \wedge \cdots \wedge \ai d z_n \wedge d \bar{z}_n$ for some function $F$, bounded as $\epsilon \to 0$, on $\partial D_{\epsilon}$. Thus $\left| \int_{\partial D_{\epsilon}} n \ai \frac{\partial |s|^{2 \beta}_h}{\partial \bar{z}_1} d \bar{z}_1 \wedge \hat{\omega}_T^{n-1} \right| = O(\epsilon^{2 \beta -1} \epsilon) \to 0$ as $\epsilon \to 0$ if $\beta >0$.

Again by noting $d z_1 \wedge d \bar{z}_1=0$ on $\partial D_{\epsilon}$, we observe that $d \bar{z}_i \wedge \hat{\omega}_T^{n-1} |_{\partial D_{\epsilon}}= F \epsilon d \theta \wedge \ai d z_2 \wedge d \bar{z}_2 \wedge \cdots \wedge \ai d z_n \wedge d \bar{z}_n$ for some function $F = O(\epsilon^{2 \beta -1})$ on $\partial D_{\epsilon}$. Thus $\left| \int_{\partial D_{\epsilon}} n \ai \frac{\partial |s|^{2 \beta}_h}{\partial \bar{z}_i} d \bar{z}_i \wedge \hat{\omega}_T^{n-1} \right| = O(\epsilon^{2 \beta} \epsilon^{2 \beta -1} \epsilon) \to 0$ as $\epsilon \to 0$ if $\beta >0$.
\end{proof}


\begin{lemma} \label{holinvcstf}
The average of the holomorphy potential $\int_X \hat{H} \frac{\hat{\omega}^n}{n!} $ in terms of the conically singular metric with cone angle $ 2 \pi \beta$ of elementary form $\hat{\omega} = \omega + \lambda \ai \ddbar |s|_h^{2 \beta}$ with $\omega \in c_1 (L)$ is equal to the one $\int_X H \frac{\omega^n} { n!}$ measured in terms of the smooth \kah metric $ \omega$, if $\beta >0$.
\end{lemma}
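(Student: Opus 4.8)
The plan is to deform along the same one-parameter family of metrics used in Lemma~\ref{volinvcstf}. Write $\psi := |s|^{2\beta}_h$ and $\hat{\omega}_t := \omega + t \ai \ddbar \psi$ for $0 \le t \le \lambda$, with $\lambda$ small enough that each $\hat{\omega}_t$ is a conically singular metric of elementary form. By the computation (\ref{varhamhomega}) recalled in \S\ref{futinvcmpcsmtf}, the holomorphy potential of $\Xi$ with respect to $\hat{\omega}_t$ is $\hat{H}_t = H - t \ai \Xi(\psi)$, so $\hat{H}_0 = H$ and $\hat{H}_\lambda = \hat{H}$. Setting $J(t) := \int_{X \setminus D} \hat{H}_t \frac{\hat{\omega}_t^n}{n!}$, we have $J(0) = \int_X H \frac{\omega^n}{n!}$ and $J(\lambda) = \int_X \hat{H} \frac{\hat{\omega}^n}{n!}$ (the endpoint integrals converging by Remark~\ref{remvolcs} and the first item of Corollary~\ref{intfutakisingth}), so it suffices to prove $J'(t) \equiv 0$ for $0 \le t \le \lambda$.

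Differentiating, and using $\frac{d}{dt} \hat{\omega}_t^n = n \ai \ddbar \psi \wedge \hat{\omega}_t^{n-1}$ together with $\frac{d}{dt} \hat{H}_t = - \ai \Xi(\psi)$, gives
\[
J'(t) = - \int_{X \setminus D} \ai \Xi(\psi) \frac{\hat{\omega}_t^n}{n!} + \int_{X \setminus D} \hat{H}_t \, \ai \ddbar \psi \wedge \frac{\hat{\omega}_t^{n-1}}{(n-1)!}.
\]
On a compact manifold the second term cancels the first: writing $\Delta_{\hat{\omega}_t}$ for the complex Laplacian, one has $\ai \ddbar \psi \wedge \frac{\hat{\omega}_t^{n-1}}{(n-1)!} = (\Delta_{\hat{\omega}_t} \psi) \frac{\hat{\omega}_t^n}{n!}$, and self-adjointness gives $\int \hat{H}_t (\Delta_{\hat{\omega}_t} \psi) \frac{\hat{\omega}_t^n}{n!} = \int \psi (\Delta_{\hat{\omega}_t} \hat{H}_t) \frac{\hat{\omega}_t^n}{n!}$; the holomorphy relation $\iota(\Xi) \hat{\omega}_t = - \bar{\partial} \hat{H}_t$ then yields, in local coordinates $\Xi = v^k \partial_{z_k}$, the identity $\Delta_{\hat{\omega}_t} \hat{H}_t = - \ai (\det \hat{g}_t)^{-1} \partial_k (v^k \det \hat{g}_t)$, so a final integration by parts turns $\int \psi (\Delta_{\hat{\omega}_t} \hat{H}_t) \frac{\hat{\omega}_t^n}{n!}$ into $\int \ai \Xi(\psi) \frac{\hat{\omega}_t^n}{n!}$. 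Thus the entire content is to justify these integrations by parts on the noncompact $X \setminus D$.

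To do this I would excise the tubular neighbourhood $D_\epsilon = \{ |s|_h \le \epsilon \}$, perform every integration by parts on $X \setminus D_\epsilon$ via Stokes, and let $\epsilon \to 0$, exactly as in the proof of Lemma~\ref{volinvcstf}. The interior integrals converge since $\hat{\omega}_t^n = O(|z_1|^{2\beta - 2})$ (Lemma~\ref{lemvoltyp}), both $\Delta_{\hat{\omega}_t} \psi$ and $\Delta_{\hat{\omega}_t} \hat{H}_t$ are bounded on $X \setminus D$ (Lemma~\ref{lemginv}), and $\hat{H}_t = H + O(|z_1|^{2\beta})$; because $\beta > 0$ all resulting fractional powers of $r = |z_1|$ are integrable. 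Each boundary term over $\partial D_\epsilon$ carries the factor $dz_1 = O(\epsilon) d\theta$ together with at least one power of $\psi = O(\epsilon^{2\beta})$ or of $\bar{\partial}\psi$, hence is $O(\epsilon^{2\beta})$ or $O(\epsilon^{2\beta} \log \epsilon)$ and vanishes; these are precisely the estimates already established in (\ref{probfone})--(\ref{probfthree}) and in Lemma~\ref{volinvcstf}.

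The main obstacle is exactly the control of these boundary integrals: since $\psi$ vanishes only to order $|z_1|^{2\beta}$ while its $z_1$-derivatives blow up, the double integration by parts produces boundary terms that do not vanish term-by-term but only after inserting the precise asymptotics of $\hat{\omega}_t$ along $\partial D_\epsilon$ (as in (\ref{estomtilde})), replacing $\hat{\omega}_t^{n-1}$ there by its contraction $\tilde{\omega}^{n-1}$ that drops the $dz_1 \wedge d\bar{z}_1$ component. Once every such term is shown to be $O(\epsilon^{2\beta} \log \epsilon)$, we conclude $J'(t) \equiv 0$, whence $J(\lambda) = J(0)$, which is the desired equality.
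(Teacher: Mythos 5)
Your proposal is correct and follows the same overall strategy as the paper: interpolate along $\hat{\omega}_t = \omega + t \ai \ddbar |s|^{2\beta}_h$, differentiate under the integral (justified by dominated convergence), and reduce the vanishing of $J'(t)$ to boundary estimates on $\partial D_{\epsilon}$ of exactly the kind already established in Lemma \ref{volinvcstf}. The one genuine difference is how the cancellation inside $J'(t)$ is organised. The paper notes that the two terms combine into a single exact form,
\[
\left. \frac{d}{dt} \right|_{t=T} \int_X \hat{H}_t \, \hat{\omega}_t^n \;=\; - \ai n \int_{X \setminus D} d \left( \hat{H}_T \, \partial |s|^{2\beta}_h \right) \wedge \hat{\omega}_T^{n-1},
\]
so a single application of Stokes' theorem on $U \setminus D_{\epsilon}$ produces exactly one boundary integral, $\int_{\partial D_{\epsilon}} \hat{H}_T \, \partial |s|^{2\beta}_h \wedge \hat{\omega}_T^{n-1}$, which is $O(\epsilon^{2\beta})$ by the same estimates as in Lemma \ref{volinvcstf}. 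Your detour through $\Delta_{\hat{\omega}_t}$, its formal self-adjointness, and the divergence identity for $\Delta_{\hat{\omega}_t} \hat{H}_t$ reaches the same cancellation, but at the cost of two or three separate integrations by parts and correspondingly more boundary terms (of the shape $\int_{\partial D_{\epsilon}} (\hat{H}_t \bar{\partial} \psi - \psi \bar{\partial} \hat{H}_t ) \wedge \hat{\omega}_t^{n-1}$ and $\int_{\partial D_{\epsilon}} \psi \, \iota(\Xi) \hat{\omega}_t^n$); each of these does vanish for $\beta > 0$ by the asymptotics you cite ($\hat{\omega}_t^n = O(|z_1|^{2\beta-2})$, $\hat{H}_t = H + O(|z_1|^{2\beta})$, $d\bar{z}_1 |_{\partial D_{\epsilon}} = O(\epsilon)\, d\theta$, and Lemma \ref{lemginv}), so the argument closes, but nothing is gained in generality and the bookkeeping is heavier. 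In short: same skeleton, less economical packaging of the integration by parts.
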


In particular, it is equal to $b_0$ (in \S \ref{usualksdtystatotcon}) of the product test configuration for $(X,L)$ defined by the holomorphic vector field on $X$ generated by $H$ (cf.~\cite[\S 2]{dontoric}), if $\beta >0$.

\begin{proof}

Recall that the holomorphy potential varies as (cf.~(\ref{varhamhomega})) $\left. \frac{d}{dt} \right|_{t = T} \hat{H}_t = \hat{g}_T^{i \bar{j}} \left( \frac{\partial}{\partial \bar{z}_j} \hat{H}_T \right) \left( \frac{\partial}{\partial z_i} |s|^{2 \beta}_h \right)$. Thus, using the Lebesgue convergence theorem (as in the proof of Lemma \ref{volinvcstf}), we get
\begin{align*}
\left. \frac{d}{dt} \right|_{t = T} \int_{X } \hat{H}_t \hat{\omega}^n_t &= \int_{X \setminus D} \ai n \left(  \partial |s|^{2 \beta}_h \wedge  \bar{\partial} \hat{H}_T + \hat{H}_T  \ddbar |s|^{2 \beta}_h \right) \wedge \hat{\omega}_T^{n-1} \\
&=  - \ai n \int_{X \setminus D}  d \left( \hat{H}_T \partial |s|^{2 \beta}_h \right) \wedge \hat{\omega}_T^{n-1}.
\end{align*}

We proceed as we did above in proving Lemma \ref{volinvcstf}. When $T=0$ we evaluate
\begin{align*}
 &\int_{U \setminus D}  d \left( H \partial |s|^{2 \beta}_h \right) \wedge {\omega}^{n-1} \\
 &=  \lim_{\epsilon \to 0} \int_{U \setminus D_{\epsilon}} d \left( H \partial |s|^{2 \beta}_h \right) \wedge {\omega}^{n-1} +   \lim_{\epsilon \to 0} \int_{D_{\epsilon} \setminus D} d \left( H \partial |s|^{2 \beta}_h \right) \wedge {\omega}^{n-1}
\end{align*}
Noting that $H$ is a smooth function defined globally on the whole of $X$, we apply exactly the same argument that we used in proving Lemma \ref{volinvcstf} to see that both these terms go to 0 as $\epsilon \to 0$.

When $T >0$, we evaluate
\begin{align*}
 &\int_{U \setminus D} d \left( \hat{H}_T \partial |s|^{2 \beta}_h \right) \wedge \hat{\omega}_T^{n-1} \\
 &=  \int_{U \setminus D_{\epsilon}} d \left( \hat{H}_T \partial |s|^{2 \beta}_h \right) \wedge \hat{\omega}_T^{n-1} +  \int_{D_{\epsilon} \setminus D} d \left( \hat{H}_T \partial |s|^{2 \beta}_h \right) \wedge \hat{\omega}_T^{n-1} .
\end{align*}
Recalling that $|\hat{H}_T | < \cst . (1 + r^{2 \beta})$, we can apply exactly the same argument as we used in the proof of Lemma \ref{volinvcstf}. Hence we finally get $\left. \frac{d}{dt} \right|_{t = T} \int_{X \setminus D} \hat{H}_t \hat{\omega}^n_t =0$ for all $0 \le T \ll 1$ if $\beta > 0$.
\end{proof}

As a consequence of Corollary \ref{intfutakisingth} and Lemmas \ref{volinvcstf}, \ref{holinvcstf}, we have the following.

\begin{corollary} \label{corfutcsmtypdist}
If $0 < \beta  <1$, we have
\begin{align*}
\textup{Fut} (\Xi , \hat{\omega}) =& \int_X \hat{H} ( S(\hat{\omega}) - \bar{S} (\hat{\omega}))\frac{\hat{\omega}^n}{n!} \\
=& \int_{X \setminus D} \hat{H} (S (\hat{\omega}) - \underline{S}(\hat{\omega})) \frac{\hat{\omega}^n}{n!} \\
&+ 2 \pi (1 - \beta) \left( \int_D H \frac{\omega^{n-1}}{(n-1)!} -   \frac{\mathrm{Vol} (D , \omega)}{\mathrm{Vol} (X , {\omega})}\int_X {H} \frac{{\omega}^n}{n!} \right),
\end{align*}
where we note that the last two terms are invariant under changing the \kah metric $\omega \mapsto \omega + \ai \ddbar \phi$ by $\phi \in C^{\infty} (X, \rl)$ (cf.~Theorem \ref{dfdiff}). 
\end{corollary}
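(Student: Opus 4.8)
The plan is to obtain both displayed equalities by substituting the two invariance lemmas into the formula already proved in Corollary \ref{intfutakisingth}. The first equality is essentially a matter of unwinding definitions: interpreting the measure $S(\hat{\omega}) \hat{\omega}^n$ distributionally as the current $n \, \mathrm{Ric}(\hat{\omega}) \wedge \hat{\omega}^{n-1}$ (which is legitimate by Theorem \ref{scaldist}) and taking $\bar{S}(\hat{\omega})$ to be the total average from Remark \ref{averscaldxd}, the expression $\int_X \hat{H}(S(\hat{\omega}) - \bar{S}(\hat{\omega})) \frac{\hat{\omega}^n}{n!}$ expands, using Theorem \ref{scaldist} together with the relation $\bar{S}(\hat{\omega}) = \underline{S}(\hat{\omega}) + 2\pi(1-\beta)\,\mathrm{Vol}(D,\omega)/\mathrm{Vol}(X,\hat{\omega})$ of Remark \ref{averscaldxd}, into exactly the right-hand side of Corollary \ref{intfutakisingth}. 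Thus the first equality holds by definition of $\textup{Fut}(\Xi,\hat{\omega})$, once we keep track of the distributional contribution along $D$.

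For the second equality I would start from the formula of Corollary \ref{intfutakisingth},
\begin{equation*}
\textup{Fut}(\Xi, \hat{\omega}) = \int_{X \setminus D} \hat{H}(S(\hat{\omega}) - \underline{S}(\hat{\omega})) \frac{\hat{\omega}^n}{n!} + 2\pi(1-\beta)\left( \int_D H \frac{\omega^{n-1}}{(n-1)!} - \frac{\mathrm{Vol}(D,\omega)}{\mathrm{Vol}(X,\hat{\omega})} \int_X \hat{H} \frac{\hat{\omega}^n}{n!} \right),
\end{equation*}
and replace the two quantities in the correction term that are computed with the singular metric $\hat{\omega}$ by their smooth counterparts. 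Lemma \ref{volinvcstf} gives $\mathrm{Vol}(X,\hat{\omega}) = \int_X c_1(L)^n/n! = \mathrm{Vol}(X,\omega)$, and Lemma \ref{holinvcstf} gives $\int_X \hat{H} \frac{\hat{\omega}^n}{n!} = \int_X H \frac{\omega^n}{n!}$; both rely only on $\beta > 0$. Substituting turns $\frac{\mathrm{Vol}(D,\omega)}{\mathrm{Vol}(X,\hat{\omega})} \int_X \hat{H} \frac{\hat{\omega}^n}{n!}$ into $\frac{\mathrm{Vol}(D,\omega)}{\mathrm{Vol}(X,\omega)} \int_X H \frac{\omega^n}{n!}$, which is precisely the second displayed line, with every quantity in the correction term now expressed through the smooth reference metric $\omega$. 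The invariance remark is then immediate: $\mathrm{Vol}(D,\omega) = \int_D c_1(L)^{n-1}/(n-1)!$ and $\mathrm{Vol}(X,\omega) = \int_X c_1(L)^n/n!$ are cohomological, and the combination $\int_D H \frac{\omega^{n-1}}{(n-1)!} - \frac{\mathrm{Vol}(D,\omega)}{\mathrm{Vol}(X,\omega)}\int_X H \frac{\omega^n}{n!}$ is exactly the metric-independent correction appearing in the log Futaki invariant, whose invariance under $\omega \mapsto \omega + \ai \ddbar \phi$ is part of Theorem \ref{dfdiff}.

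There is no genuine analytic obstacle here, since all the hard work has already been carried out in Corollary \ref{intfutakisingth} and in Lemmas \ref{volinvcstf} and \ref{holinvcstf}; the corollary is a bookkeeping assembly of these. The only point requiring care is that the boundary integral $\int_D H \frac{\omega^{n-1}}{(n-1)!}$ must survive the substitution unchanged — which it does because $\hat{H} = H + H'$ agrees with $H$ on $D$, the correction $H'$ being $O(|z_1|^{2\beta})$ and hence vanishing along $D$ (cf.~\S \ref{futinvcmpcsmtf}) — so that after the substitutions the surviving boundary and average terms line up with the smooth correction term and Theorem \ref{dfdiff} applies verbatim.
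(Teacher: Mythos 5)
Your proposal is correct and follows exactly the paper's route: the paper derives this corollary by combining Corollary \ref{intfutakisingth} (whose correction term is rewritten via Lemma \ref{volinvcstf} for $\mathrm{Vol}(X,\hat{\omega}) = \mathrm{Vol}(X,\omega)$ and Lemma \ref{holinvcstf} for $\int_X \hat{H}\,\hat{\omega}^n/n! = \int_X H\,\omega^n/n!$) with the distributional identification of $\int_X \hat{H}(S(\hat{\omega})-\bar{S}(\hat{\omega}))\hat{\omega}^n/n!$ coming from Theorem \ref{scaldist} and Remark \ref{averscaldxd}, and the invariance remark is Theorem \ref{dfdiff} applied to the smooth correction term. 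Your additional observation that $\hat{H}|_D = H|_D$ because $H' = O(|z_1|^{2\beta})$ is also the point made in \S\ref{futinvcmpcsmtf} and Remark \ref{exttermlogfutsing}.
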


\begin{remark} \label{exttermlogfutsing}
Note that the ``distributional'' term
\begin{equation*}
2 \pi (1 - \beta) \left( \int_D H \frac{\omega^{n-1}}{(n-1)!} -   \frac{\text{Vol} (D , \omega)}{\text{Vol} (X , {\omega})}\int_X {H} \frac{{\omega}^n}{n!} \right)
\end{equation*}
in the above formula is precisely the term that appears in the definition of the log Futaki invariant (up to the factor of $2 \pi$). Note also that $\textup{Vol} (D , \hat{\omega}) = \int_X [D] \wedge \frac{\hat{\omega}^{n-1}}{(n-1)!} = \int_D \frac{\omega^{n-1}}{(n-1)!} = \mathrm{Vol} (D , \omega)$ and $\int_D \hat{H} \frac{\hat{\omega}^{n-1}}{(n-1)!} = \int_D H \frac{\omega^{n-1}}{(n-1)!}$ by Lemma \ref{termtwo} (and its extension given in \S \ref{futinvcmpcsmtf}), where $[D]$ is the current of integration over $D$. This means that, combined with Lemmas \ref{volinvcstf} and \ref{holinvcstf}, we get
\begin{equation*}
\int_D \hat{H} \frac{\hat{\omega}^{n-1}}{(n-1)!} -   \frac{\text{Vol} (D , \hat{\omega})}{\text{Vol} (X , \hat{\omega})}\int_X \hat{H} \frac{\hat{\omega}^n}{n!}  = \int_D H \frac{\omega^{n-1}}{(n-1)!} -   \frac{\text{Vol} (D , \omega)}{\text{Vol} (X , {\omega})}\int_X {H} \frac{{\omega}^n}{n!} .
\end{equation*}

Thus, if we compute the log Futaki invariant $\textup{Fut}_{D , \beta}$ in terms of the conically singular metrics of elementary form $\hat{\omega}$, we get $\textup{Fut}_{D, \beta} (\Xi , \hat{\omega}) = \frac{1}{2 \pi} \int_{X \setminus D} \hat{H} ( S(\hat{\omega}) - \underline{S} (\hat{\omega}))\frac{\hat{\omega}^n}{n!}$, which will certainly be 0 if $\hat{\omega}$ satisfies $S(\hat{\omega}) = \underline{S} (\hat{\omega})$ on $X \setminus D$, i.e. $\hat{\omega}$ is cscK as defined in Definition \ref{defofcsckncpt}.
\end{remark}

\subsection{Invariance of the Futaki invariant computed with respect to the conically singular metrics of elementary form} \label{invfutwrtcsmtf}

We first recall how we prove the invariance of the Futaki invariant in the smooth case, following the exposition given in \S 4.2 of Sz\'ekelyhidi's textbook \cite{sze}. Write $\omega$ for an arbitrarily chosen reference metric in $c_1 (L)$ and write $\omega_t := \omega + t \ai \ddbar \psi$ with some $\psi \in C^{\infty} (X , \rl)$. Defining $\textup{Fut}_t (\Xi) := \int_X H_t (S(\omega_t) - \bar{S} ) \frac{\omega^n_t}{n!}$, where $H_t$ is the holomorphy potential of $\Xi$ with respect to $\omega_t$, we need to show $\frac{d}{dt} |_{t=0} \textup{Fut}_t (\Xi) =0$. 

Arguing as in \cite[\S 4.2]{sze}, we get
\begin{align*}
\left. \frac{d}{dt} \right|_{t=0}  \textup{Fut}_t (\Xi) 
=& \int_X  \ai n \left(  (S(\omega) - \bar{S})  \partial \psi  \wedge \bar{\partial} H  \right.  \\
&\left. - H ( \overline{\mathfrak{D}_{\omega}^* \mathfrak{D}_{\omega} \psi} -  \partial \psi  \wedge \bar{\partial} S(\omega) )   + H  (S(\omega) - \bar{S})  \ddbar \psi   \right) \wedge \omega^{n-1}
\end{align*}
where $\mathfrak{D}_{\omega}^* \mathfrak{D}_{\omega}$ is a fourth order elliptic self-adjoint linear operator defined as
\begin{equation*}
\mathfrak{D}_{\omega}^* \mathfrak{D}_{\omega} \phi := \Delta_{{\omega}}^2 \phi + \nabla_j (\textup{Ric}({\omega})^{\bar{k} j} \partial_{\bar{k}} \phi ).
\end{equation*}
We now perform the following integration by parts
\begin{align*}
\int_X &(S(\omega) - \bar{S})  \partial \psi  \wedge \bar{\partial} H \wedge \omega^{n-1} \\
=& - \int_X d \left(  H (S(\omega) - \bar{S})  \partial \psi  \wedge \omega^{n-1} \right) + \int_X  H    \bar{\partial} S(\omega) \wedge \partial \psi \wedge \omega^{n-1}  \\
&- \int_X ( H (S(\omega) -\bar{S})  \ddbar \psi \wedge \omega^{n-1}  \\
=&\int_X  H    \bar{\partial} S(\omega) \wedge \partial \psi \wedge \omega^{n-1}  - \int_X ( H (S(\omega) - \bar{S})  \ddbar \psi \wedge \omega^{n-1}  
\end{align*}
by using Stokes' theorem. This means
\begin{equation*}
\left. \frac{d}{dt} \right|_{t=0} \textup{Fut}_t (\Xi)= - \int_X H \overline{\mathfrak{D}_{\omega}^* \mathfrak{D}_{\omega} \psi} \omega^n = - \int_X \psi \mathfrak{D}_{\omega}^* \mathfrak{D}_{\omega} H \omega^n =0
\end{equation*}
as required, again integrating by parts.

We now wish to perform the above calculations when the \kah metric $\hat{\omega}$ has cone singularities along $D$. An important point is that, since we are on the noncompact manifold $X \setminus D$, we have to evaluate the boundary integral when we apply Stokes' theorem, and that the remaining integrals may not be finite.

As we did in the proof of Lemma \ref{volinvcstf}, we apply the partition of unity and reduce to a local computation around an open set $U$ on which the integrand is compactly supported. Writing $\hat{H} = H + H'$ for the holomorphy potential of $\Xi$ with respect to $\hat{\omega}$, as we did in (\ref{holpotcsmtf}), we first evaluate
\begin{align*}
&\int_{U \setminus D} d \left(  \hat{H} (S(\hat{\omega}) - \underline{S}(\hat{\omega}))  \partial  \psi \wedge \hat{\omega}^{n-1} \right)  \\
&= \lim_{\epsilon \to 0} \int_{U \setminus D_{\epsilon}} d \left(  \hat{H} (S(\hat{\omega}) - \underline{S}(\hat{\omega}))  \partial  \psi \wedge \hat{\omega}^{n-1} \right) \\
&=\lim_{\epsilon \to 0} \int_{\partial D_{\epsilon}} \hat{H} (S(\hat{\omega}) - \underline{S}(\hat{\omega}))  \partial  \psi \wedge \hat{\omega}^{n-1} .
\end{align*}

Note $dz _1 \wedge d \bar{z}_1 =0$ on $\partial D_{\epsilon}$, which implies
\begin{align}
\partial  \psi \wedge \hat{\omega}^{n-1}  |_{\partial D_{\epsilon}}  =&  \frac{\partial \psi}{\partial z_1} F_1 \epsilon d \theta \wedge  \ai dz_2 \wedge d \bar{z}_2 \wedge \cdots \wedge \ai dz_n \wedge d\bar{z}_n \notag \\
&+ \sum_{i \neq 1} \frac{\partial \psi}{\partial z_i} F_i \epsilon d \theta \wedge F_1 \ai dz_2 \wedge d \bar{z}_2 \wedge \cdots \wedge \ai dz_n \wedge d\bar{z}_n \label{estdpsiohnm1eps}
\end{align}
where $F_1$ is bounded as $\epsilon \to 0$ and $F_i$ ($i \neq 1$) is at most of order $\epsilon^{2 \beta -1}$, we see that $\partial  \psi \wedge \hat{\omega}^{n-1}  |_{\partial D_{\epsilon}} = O(\epsilon) + O(\epsilon^{2 \beta})$. Recalling $\hat{H}   = O(1) + O(|z_1|^{2 \beta})$ and $S(\hat{\omega}) = O(1) + O(|z_1|^{ 2-4 \beta})$, we see that the integrand of the above is at most of order $O(\epsilon^{1+2 - 4\beta})$. Thus we need $\beta < 3/4$ for the boundary integral to be 0.


We now evaluate $\int_X  \hat{H}    \bar{\partial} S(\hat{\omega}) \wedge \partial \psi \wedge \hat{\omega}^{n-1} $. Writing
\begin{equation*}
\bar{\partial} S(\hat{\omega}) \wedge \partial \psi \wedge \hat{\omega}^{n-1} = \frac{\partial S (\hat{\omega})}{\partial \bar{z}_1} d \bar{z}_1 \wedge \partial \psi \wedge \hat{\omega}^{n-1} + \sum_{i \neq 1} \frac{\partial S (\hat{\omega})}{\partial \bar{z}_i} d \bar{z}_i \wedge \partial \psi \wedge \hat{\omega}^{n-1},
\end{equation*}
we see that the order of the first term is at most $O(|z_1|^{2 - 4 \beta -1} |z_1|^{2 \beta -1+1})= O(|z_1|^{1 - 2 \beta})$, and the second term is at most of order $O(|z_1|^{2 - 4 \beta} |z_1|^{2 \beta -1}) = O(|z_1|^{1 - 2 \beta})$, and hence we need $1 - 2 \beta > -1$, i.e. $\beta < 1$ for the integral to be finite, by recalling $\hat{H} = O(1) + O(|z_1|^{2 \beta})$. Since the second term $\int_X (\Delta_{\hat{\omega}} \psi ) \hat{H} (S(\hat{\omega}) - \underline{S}(\hat{\omega})) \hat{\omega}^n$ is manifestly finite (by Lemma \ref{lemginv} and Remark \ref{csscall1}), we can perform the integration by parts to have $\left. \frac{d}{dt} \right|_{t=0} \textup{Fut}_t (\Xi) = - \int_{X \setminus D} \hat{H} \overline{\mathfrak{D}_{\hat{\omega}}^* \mathfrak{D}_{\hat{\omega}} \psi} \hat{\omega}^n $ if $0 < \beta < 3/4$. It remains to prove $ \int_{X \setminus D} \hat{H} \overline{\mathfrak{D}_{\hat{\omega}}^* \mathfrak{D}_{\hat{\omega}} \psi} \hat{\omega}^n =  \int_{X \setminus D} \psi \mathfrak{D}_{\hat{\omega}}^* \mathfrak{D}_{\hat{\omega}} \hat{H} \hat{\omega}^n =0 $. Recalling $\overline{ \mathfrak{D}_{\hat{\omega}}^* \mathfrak{D}_{\hat{\omega}} \psi} = \Delta_{\hat{\omega}}^2 \psi + \hat{\nabla}_{\bar{j}} (\textup{Ric}(\hat{\omega})^{k \bar{j}} \partial_{k} \psi )$ (by noting $\bar{\psi} = \psi$ as $\psi$ is a real function), where $\hat{\nabla}$ is the covariant derivative on $TX$ defined by the Levi-Civita connection of $\hat{\omega}$, we first consider
\begin{align*}
&\int_{U \setminus D} \hat{H} \hat{\nabla}_{\bar{j}} (\textup{Ric}(\hat{\omega})^{k \bar{j} } \partial_{{k}} \psi ) \hat{\omega}^n \\
&=\int_{U \setminus D} \hat{H} (\hat{\nabla}_{\bar{j}} \textup{Ric}(\hat{\omega})^{k \bar{j} }) \partial_{{k}} \psi  \hat{\omega}^n + \int_{U \setminus D} \hat{H}  \textup{Ric}(\hat{\omega})^{k \bar{j} } \bar{\partial}_j \partial_{{k}} \psi  \hat{\omega}^n\\
&=\ai n \int_{U \setminus D} \hat{H} \partial \psi \wedge \bar{\partial} S(\hat{\omega}) \wedge \hat{\omega}^{n-1} + \int_{U \setminus D}  \hat{H} S(\hat{\omega}) \Delta_{\hat{\omega}} \psi \hat{\omega}^n \\
&\ \ \ \ \ \ - \ai n (n-1) \int_{U \setminus D} \hat{H}  \textup{Ric}(\hat{\omega}) \wedge \ddbar \psi \wedge \hat{\omega}^{n-2}
\end{align*}
where we used the Bianchi identity $\hat{\nabla}_{\bar{j}} \textup{Ric}(\hat{\omega})^{k \bar{j} } = \hat{g}^{k \bar{j}} \partial_{\bar{j}} S(\hat{\omega})$ and the identity in \cite[Lemma 4.7]{sze}. We perform the integration by parts for the second and the third term. We re-write the second term as

\begin{align*}
\int_{U \setminus D}  \hat{H} S(\hat{\omega}) \Delta_{\hat{\omega}} \psi \hat{\omega}^n 
= \ai n  &\left( -  \int_{U \setminus D} d( \hat{H} S(\hat{\omega}) {\partial} \psi \wedge \hat{\omega}^{n-1} ) - \int_{U \setminus D} d (S(\hat{\omega}) \bar{\partial} \hat{H}   \psi \wedge \hat{\omega}^{n-1} ) \right. \\
&\ \ \ \ +  \int_{U \setminus D} {\partial} S(\hat{\omega}) \wedge \bar{\partial} \hat{H}   \wedge \psi \hat{\omega}^{n-1} +  \int_{U \setminus D} S(\hat{\omega}) \ddbar \hat{H}   \wedge \psi \hat{\omega}^{n-1}  \\
&\ \ \ \ + \left. \int_{U \setminus D} \hat{H} \bar{\partial} S(\hat{\omega})  \wedge {\partial} \psi \wedge \hat{\omega}^{n-1}  \right)
\end{align*}
and the third term as
\begin{align*}
&\int_{U \setminus D} \hat{H}  \textup{Ric}(\hat{\omega}) \wedge \ddbar \psi \wedge \hat{\omega}^{n-2} \\
&=\int_{U \setminus D} d (\hat{H}  \textup{Ric}(\hat{\omega}) \wedge \bar{\partial} \psi \wedge \hat{\omega}^{n-2} ) + \int_{U \setminus D} d( \partial \hat{H} \wedge \textup{Ric}(\hat{\omega}) \wedge \psi  \hat{\omega}^{n-2}) \\
&\ \ \ \ \  -\int_{U \setminus D} \psi \bar{\partial} \partial \hat{H} \wedge \textup{Ric}(\hat{\omega})  \wedge \hat{\omega}^{n-2} .
\end{align*}

We thus have $\int_{U \setminus D} \hat{H} \hat{\nabla}_{\bar{j}} (\textup{Ric}(\hat{\omega})^{k \bar{j} } \partial_{{k}} \psi ) \hat{\omega}^n = \int_{U \setminus D} \psi \hat{\nabla}_{j} (\textup{Ric}(\hat{\omega})^{\bar{k} {j} } \partial_{\bar{k}} \hat{H} ) \hat{\omega}^n - \ai n (n-1) (B_1 + B_2) - \ai n ( B_3 + B_4)$, 
where the $B_i$'s stand for the boundary integrals 
\begin{align*}
B_1 &:= \lim_{\epsilon \to 0} \int_{\partial D_{\epsilon}} \hat{H}  \textup{Ric}(\hat{\omega}) \wedge \bar{\partial} \psi \wedge \hat{\omega}^{n-2}, \\
B_2 &:=  \lim_{\epsilon \to 0} \int_{\partial D_{\epsilon}}  \psi \partial \hat{H} \wedge \textup{Ric}(\hat{\omega}) \wedge  \hat{\omega}^{n-2}, \\
B_3 &:= \lim_{\epsilon \to 0} \int_{\partial D_{\epsilon}} \hat{H} S(\hat{\omega}) {\partial} \psi \wedge \hat{\omega}^{n-1}, \\
B_4 &:= \lim_{\epsilon \to 0} \int_{\partial D_{\epsilon}} \psi S(\hat{\omega}) \bar{\partial} \hat{H}   \wedge \hat{\omega}^{n-1},
\end{align*}
which we now evaluate.

We first evaluate $\int_{\partial D_{\epsilon}} \hat{H}  \textup{Ric}(\hat{\omega}) \wedge \bar{\partial} \psi \wedge \hat{\omega}^{n-2} $ in terms of $\epsilon$. Since $dz_1 \wedge d\bar{z}_1 =0$ on $\partial D_{\epsilon}$, we can see that this converges to 0 ($\epsilon \to 0$) as long as $0< \beta <1$, by recalling Lemma \ref{lemrictyf}. We thus get $B_1 =0$.


We then evaluate $ \int_{\partial D_{\epsilon}}  \psi \partial \hat{H} \wedge \textup{Ric}(\hat{\omega}) \wedge  \hat{\omega}^{n-2}$. We see that this converges to 0 ($\epsilon \to 0$) as long as $0< \beta <1$, exactly as we did before. We thus get $B_2 =0$.



Now we see that $\int_{\partial D_{\epsilon}} \hat{H} S(\hat{\omega}) {\partial} \psi \wedge \hat{\omega}^{n-1}$ is at most of order $\epsilon^{3 - 4 \beta}$, since $S(\hat{\omega})$ is at most of order $\epsilon^{2 - 4 \beta}$ and ${\partial} \psi \wedge \hat{\omega}^{n-1}$ is of order $O(\epsilon) + O(\epsilon^{2 \beta})$ (cf.~(\ref{estdpsiohnm1eps})), and hence converges to 0 (as $\epsilon \to 0$) if $\beta < 3/4$. Similarly, we can show that $\int_{\partial D_{\epsilon}} \psi S(\hat{\omega}) \bar{\partial} \hat{H}   \wedge \hat{\omega}^{n-1}$ converges to 0 if $\beta < 3/4$. Thus, we get $ B_3 = B_4 = 0$.

Note that $ \int_{U \setminus D} \psi \hat{\nabla}_k (  \textup{Ric}(\hat{\omega})^{\bar{j} k} (\partial_{\bar{j}} \hat{H}))  \hat{\omega}^n$ converges if $0 < \beta <1$, since Lemma \ref{lemrictyf}, combined with Lemma \ref{lemginv}, implies $\mathrm{Ric} (\hat{\omega})^{1 \bar{1}} = O(|z_1|^{2 - 2 \beta}) + O(|z_1|^{4 - 4 \beta})$, $\mathrm{Ric} (\hat{\omega})^{1 \bar{j}} = O(|z_1|) + O(|z_1|^{3 - 4 \beta}) + O(|z_1|^{2 - 2 \beta})$ ($j \neq 1$), and $\mathrm{Ric} (\hat{\omega})^{i \bar{j}} = O(1) + O(|z_1|^{2 \beta}) + O(|z_1|^{2 - 2 \beta})$ ($i,j \neq 1$). We thus see that we can perform the integration by parts in the above computation if we have $0 < \beta < 3/4$.


We are now left to prove $\int_{X \setminus D} \psi \Delta^2_{\hat{\omega}} \hat{H} \hat{\omega}^n = \int_{X \setminus D} \hat{H} \Delta^2_{\hat{\omega}} \psi \hat{\omega}^n$. We write
\begin{align*}
&\int_{X \setminus D} \hat{H} \Delta^2_{\hat{\omega}} \psi \hat{\omega}^n = \ai n \int_{X \setminus D} \hat{H} \ddbar (\Delta_{\hat{\omega}} \psi) \wedge \hat{\omega}^{n-1} \\
&= \ai n \int_{X \setminus D} d (\hat{H} \bar{\partial} (\Delta_{\hat{\omega}} \psi) \wedge \hat{\omega}^{n-1}) + \ai n \int_{X \setminus D} d (\partial \hat{H} \wedge (\Delta_{\hat{\omega}} \psi)  \hat{\omega}^{n-1} ) \\
&\ \ \ \ \ + \int_{X \setminus D} (\Delta_{\hat{\omega}} \hat{H}) (\Delta_{\hat{\omega}} \psi)  \hat{\omega}^{n-1} 
\end{align*}
and evaluate the boundary integrals $\lim_{\epsilon \to 0} \int_{\partial D_{\epsilon}} \hat{H} \bar{\partial} (\Delta_{\hat{\omega}} \psi) \wedge \hat{\omega}^{n-1}$ and $\lim_{\epsilon \to 0} \int_{\partial D_{\epsilon}} \partial \hat{H} \wedge (\Delta_{\hat{\omega}} \psi)  \hat{\omega}^{n-1}$ which, as before, can be shown to converge to zero as long as $\beta >0$.

We finally evaluate $\int_{U \setminus D} (\Delta_{\hat{\omega}} \hat{H}) (\Delta_{\hat{\omega}} \psi) \hat{\omega}^n$, where we recall from Lemma \ref{lemginv} that $\Delta_{\hat{\omega}}  \hat{H} = O(1) + O(|z_1|^{2 - 2 \beta})+ O(|z_1|^{2 \beta})$. Thus, computing as we did above, we see that this is finite.

Summarising the above argument, together with the results in \S \ref{invvhamcstf}, we have the following. Suppose that we compute the log Futaki invariant $\textup{Fut}_{D , \beta} (\Xi , \hat{\omega})$ defined as
\begin{equation*}
\frac{1}{2 \pi} \int_X \hat{H} (S(\hat{\omega}) - \bar{S} (\hat{\omega})) \frac{\hat{\omega}^n}{n!} -  (1- \beta ) \left( \int_D \hat{H} \frac{\hat{\omega}^{n-1}}{(n-1)!} - \frac{\textup{Vol} (D , \hat{\omega})}{\textup{Vol} (X ,\hat{\omega}) } \int_X \hat{H} \frac{\hat{\omega}^n}{n!} \right),
\end{equation*}
with respect to the conically singular metric of elementary form $\hat{\omega}$ for a holomorphic vector field $v$ that preserves the divisor $D$, with $\hat{H}$ as its holomorphy potential. As we mentioned in Remark \ref{exttermlogfutsing}, Lemmas \ref{volinvcstf}, \ref{holinvcstf}, Corollary \ref{corfutcsmtypdist}, combined with Lemma \ref{termtwo} (and its extension given in \S \ref{futinvcmpcsmtf}), show $\textup{Fut}_{D , \beta} (\Xi , \hat{\omega}) = \frac{1}{2 \pi} \int_{X \setminus D} \hat{H} (S(\hat{\omega}) - \underline{S} (\hat{\omega})) \frac{\hat{\omega}^n}{n!}$, and the calculations that we did above prove the first item of Theorem \ref{mtinvfutthird}.

\subsection{Invariance of the log Futaki invariant computed with respect to the momentum-constructed conically singular metrics} \label{iotlficmccsmss}
Now consider the case of momentum-constructed metrics on $\mathbb{X} := \prj(\mathcal{F} \oplus \cx)$ with the $\prj^1$-fibration structure $p : \prj(\mathcal{F} \oplus \cx) \to M$ over a \kah manifold $(M , \omega_M)$. In this section, we shall assume that the $\sigma$-constancy hypothesis (Definition \ref{defsigmaconst}) is satisfied for our data $\{ p: ( \mathcal{F}  , h_{\mathcal{F}})\to (M , \omega_M) , I  \}$. Let $D \subset \prj(\mathcal{F} \oplus \cx) =\mathbb{X}$ be the $\infty$-section, as before.

We first prove some lemmas that are well-known for smooth momentum-constructed metrics; the point is that they hold also for conically singular momentum-constructed metrics, since, as we shall see below, the proof applies word by word. We start with the following consequence of Lemma \ref{lemfbwv4pbps}.

\begin{lemma} \label{csqolemfbwv4pbps}
\emph{(\cite[Lemma 2.8]{hwang})}
Suppose that the $\sigma$-constancy hypothesis (Definition \ref{defsigmaconst}) is satisfied for our data. For any function $f (\tau)$ of $\tau$, we have
\begin{equation*}
\int_{\mathbb{X}} f(\tau) \frac{\omega_{\varphi}^n}{n!} = 2 \pi \mathrm{Vol}(M , \omega_M) \int_{-b}^b f(\tau) Q(\tau) d \tau ,
\end{equation*}
where $Q(\tau)$ is as defined in (\ref{defofq}). In particular, $\int_{\mathbb{X}} f(\tau) \frac{\omega_{\varphi}^n}{n!} $ does not depend on the choice of $\varphi$ or the boundary value $\varphi' (\pm b)$.
\end{lemma}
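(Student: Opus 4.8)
The plan is to reduce the integral over $X$ to a fibrewise integral by passing to the moment coordinate $\tau$, exactly as in the smooth case treated in \cite[Lemma 2.8]{hwang}, and then to verify that the cone singularity along the $\infty$-section $D = \{ \tau = b\}$ causes no difficulty because the whole expression is controlled by the bounded data $Q(\tau)$ on the compact interval $I = [-b,b]$. First I would recall from the proof of Lemma \ref{lemricmc} that on the total space of $\mathcal{F}$ (equivalently $X$ with the two sections removed) one has
\begin{equation*}
\omega_{\varphi}^{n} = \frac{n}{\varphi} p^* \omega_M(\tau)^{n-1} \wedge d\tau \wedge d^c \tau .
\end{equation*}
Since $p^* \omega_M(\tau)^{n-1} = (p^*\omega_M - \tau p^* \gamma)^{n-1}$ is the pullback of a form of top degree $2(n-1) = \dim_{\rl} M$ on $M$, wedging it with any base-direction $1$-form vanishes; hence only the purely fibrewise component of $\frac{1}{\varphi} d\tau \wedge d^c \tau$ survives the wedge product. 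By (\ref{omvpmcfbwvol}) that component is exactly the fibrewise area form $d\tau \wedge d\theta$, so that
\begin{equation*}
\omega_{\varphi}^{n} = n\, p^* \omega_M(\tau)^{n-1} \wedge d\tau \wedge d\theta
\end{equation*}
holds away from the two sections.

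With this identity in hand I would integrate, using the $\prj^1$-fibration $p : X \to M$ and Fubini. The $\sigma$-constancy hypothesis (Definition \ref{defsigmaconst}) guarantees that the eigenvalues of $\omega_M^{-1}\gamma$ are constant on $M$, so that $Q(\tau) = \omega_M(\tau)^{n-1}/\omega_M^{n-1}$ from (\ref{defofq}) is a genuine function of $\tau$ alone; consequently $\int_M \omega_M(\tau)^{n-1} = Q(\tau)\int_M \omega_M^{n-1} = (n-1)!\, Q(\tau)\, \mathrm{Vol}(M,\omega_M)$. The $\theta$-integration contributes a factor $2\pi$ and $\tau$ ranges over $[-b,b]$, giving
\begin{equation*}
\int_X f(\tau) \frac{\omega_{\varphi}^n}{n!} = \frac{1}{(n-1)!} \int_{-b}^{b}\!\! \int_0^{2\pi} f(\tau) \Big( \int_M \omega_M(\tau)^{n-1} \Big)\, d\theta\, d\tau = 2\pi\, \mathrm{Vol}(M,\omega_M) \int_{-b}^b f(\tau) Q(\tau)\, d\tau ,
\end{equation*}
which is the asserted formula.

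The main (and essentially only) obstacle is justifying this passage to the fibre integral in the presence of the cone singularity of $\omega_{\varphi}$ along $D$. This I would handle by noting that the two sections $\{\tau = \pm b\}$ form a set of measure zero, that the total mass $\int_{X\setminus D}\omega_{\varphi}^n$ is finite by Remark \ref{remvolcs}, and that the change of variables to the moment coordinate $\tau$ confines the integrand to the compact interval $[-b,b]$, on which $Q(\tau)$ is a bounded polynomial; hence Tonelli/Fubini applies verbatim as in the smooth case and no regularity of $\omega_{\varphi}$ near $\tau = b$ is needed. I would emphasize that the boundary values $\varphi'(\pm b)$ never enter the final expression — indeed $\varphi$ itself has cancelled out of $\omega_{\varphi}^n$ once written in the coordinate $\tau$ — which is precisely the independence of $\varphi$ and of $\varphi'(\pm b)$ claimed in the last sentence of the lemma.
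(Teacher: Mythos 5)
Your proposal is correct and follows essentially the same route as the paper: both reduce $\omega_{\varphi}^n$ to $\tfrac{n}{\varphi}\,p^*\omega_M(\tau)^{n-1}\wedge d\tau\wedge d^c\tau$, use $\sigma$-constancy to replace $\omega_M(\tau)^{n-1}$ by $Q(\tau)\,\omega_M^{n-1}$, and invoke the fibrewise identity (\ref{omvpmcfbwvol}) to turn the fibre integral into $2\pi\int_{-b}^b f(\tau)Q(\tau)\,d\tau$. The paper's proof is a one-line version of exactly this computation; your additional remarks on Fubini and the measure-zero singular set are harmless elaboration.
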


\begin{proof}
$\sigma$-constancy hypothesis implies that $Q(\tau)= \omega_M (\tau)^{n-1} / \omega_M^{n-1}$ is a function which depends only on $\tau$. We thus have
\begin{align*}
\int_{\mathbb{X}} f(\tau) \frac{\omega_{\varphi}^n}{n!} &=\int_{\mathbb{X}} \frac{\omega_M^{n-1}}{(n-1)!} \wedge \left(  \frac{ f(\tau) Q(\tau)}{\varphi} d \tau \wedge d^c \tau \right) \\
&=  2 \pi \mathrm{Vol}(M , \omega_M) \int_{-b}^b f(\tau) Q(\tau) d \tau ,
\end{align*}
by (\ref{omvpmcfbwvol}) in Lemma \ref{lemfbwv4pbps}.
\end{proof}

We summarise what we have obtained as follows.

\begin{lemma} \label{summcmcssmkcviavhi}
Suppose that the $\sigma$-constancy hypothesis is satisfied for our data. Let $\varphi : [-b,b] \to \rl_{\ge 0}$ be a real analytic momentum profile with $\varphi (\pm b)=0$ and $\varphi (-b) = 2$, $\varphi (-b) = - 2 \beta$, so that $\omega_{\varphi} = p^* \omega_M - \tau p^* \gamma + \frac{1}{\varphi} d \tau \wedge d^c \tau$ has cone singularities with cone angle $2 \pi \beta$ along the $\infty$-section. Let $\phi : [-b,b] \to \rl_{\ge 0}$ be another momentum profile with $\varphi (\pm b)=0$ and $\varphi (\pm b) = \mp 2$, so that $\omega_{\phi} = p^* \omega_M - \tau p^* \gamma + \frac{1}{\phi} d \tau \wedge d^c \tau$ is a smooth momentum-constructed metric. Then we have the following.

\begin{enumerate}
\item $\displaystyle [\omega_{\varphi}] = [\omega_{\phi}] $,
\item $\displaystyle \mathrm{Vol} (\mathbb{X} , \omega_{\varphi}) = 2 \pi \mathrm{Vol}(M , \omega_M) \int_{-b}^b Q(\tau) d \tau = \mathrm{Vol} (\mathbb{X} , \omega_{\phi})$,
\item $ \displaystyle \int_{\mathbb{X}} \tau \frac{\omega_{\varphi}^n}{n!} = 2 \pi \mathrm{Vol}(M , \omega_M) \int_{-b}^b \tau Q(\tau) d \tau =  \int_{\mathbb{X}} \tau \frac{\omega_{\phi}^n}{n!} $.
\end{enumerate}

\end{lemma}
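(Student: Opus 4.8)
The plan is to assemble the results already established earlier, since this statement is precisely a summary of facts proved in \S\ref{construct} and in Lemma \ref{csqolemfbwv4pbps}. Before doing so I would record that all three integrals are finite: by Remark \ref{remvolcs} any quantity of the form $\int_X f(\tau)\,\omega_{\varphi}^n/n!$ with $f$ bounded on the compact momentum interval $[-b,b]$ converges, and this covers both the constant function $f\equiv 1$ (giving the volume) and $f(\tau)=\tau$.

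For item (1) I would simply recall the argument given at the end of \S\ref{construct}. There it was shown, using the Leray--Hirsch description of $H^2(X,\rl)$ afforded by the hypothesis $b_2(M_i)=1$ (cf.~Remark \ref{kclmcmcvol}) together with the fibrewise volume computation of Lemma \ref{lemfbwv4pbps}, that both $[\omega_{\varphi}]$ and $[\omega_{\phi}]$ take the form $\sum_i \alpha_i\, p^*[p_i^*\omega_i] + 4\pi b\, c_1(\xi)$ with the \emph{same} coefficient $4\pi b$ of $c_1(\xi)$, precisely because the fibrewise volume $4\pi b$ of Lemma \ref{lemfbwv4pbps} is insensitive to the boundary value $\varphi'(b)$. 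Matching the base coefficients via the restriction $\omega_{\varphi}|_M = \omega_M(b) = \omega_{\phi}|_M$ (identifying $M$ with the $0$-section) then forces all $\alpha_i$ to agree, giving $[\omega_{\varphi}]=[\omega_{\phi}]$.

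For items (2) and (3) I would invoke Lemma \ref{csqolemfbwv4pbps}, which under the $\sigma$-constancy hypothesis evaluates $\int_X f(\tau)\,\omega_{\varphi}^n/n!$ as $2\pi\,\mathrm{Vol}(M,\omega_M)\int_{-b}^b f(\tau)Q(\tau)\,d\tau$ for any function $f$ of $\tau$. Taking $f\equiv 1$ yields the middle expression in item (2) and $f(\tau)=\tau$ yields the middle expression in item (3). The crucial point, already emphasised in Lemma \ref{csqolemfbwv4pbps}, is that this right-hand side depends only on $b$, on $(M,\omega_M)$, and on the polynomial $Q(\tau)$, and \emph{not} on the choice of momentum profile or the boundary data $\varphi'(\pm b)$. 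Consequently the identical expression computes the corresponding integral for the smooth profile $\phi$, so the two agree; this is exactly the rightmost equalities in (2) and (3).

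There is essentially no genuine obstacle at this stage: the real work was done earlier, in verifying that the fibrewise integration underlying Lemma \ref{lemfbwv4pbps} (namely the identity (\ref{omvpmcfbwvol})), and hence Lemma \ref{csqolemfbwv4pbps}, survives the presence of cone singularities along the $\infty$-section. The only point requiring a moment's care is to confirm that the relevant integrands are integrable against the singular volume form $\omega_{\varphi}^n$; this is supplied by Remark \ref{remvolcs}, since $1$ and $\tau$ are bounded on $[-b,b]$.
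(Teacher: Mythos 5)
Your proposal is correct and follows essentially the same route as the paper: item (1) is exactly the argument at the end of \S\ref{construct} (fibrewise volume $4\pi b$ from Lemma \ref{lemfbwv4pbps}, insensitive to $\varphi'(b)$, plus matching the base components by restricting to the $0$-section), and items (2) and (3) are Lemma \ref{csqolemfbwv4pbps} applied to $f\equiv 1$ and $f(\tau)=\tau$, whose right-hand side manifestly does not depend on the profile or its boundary derivatives. The paper's own proof is just this two-line citation, so nothing is missing.
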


\begin{proof}
The first item follows from Lemma \ref{lemfbwv4pbps}, and the second and the third from Lemma \ref{csqolemfbwv4pbps}.
\end{proof}

The second and the third item of the above lemma shows that the second ``distributional'' term in Corollary \ref{intfutakisingth} agrees with the ``correction'' term in the log Futaki invariant, as we saw in the case of conically singular metrics of elementary form (cf.~Corollary \ref{corfutcsmtypdist} and Remark \ref{exttermlogfutsing}). We thus get the following result.

\begin{corollary} \label{invlfimcmpbsfoxcc}
Suppose that the $\sigma$-constancy hypothesis is satisfied for our data $\{ p: (\mathcal{F} , h_{\mathcal{F}}) \to (M , \omega_M) , I\}$. Writing $\textup{Fut} (\Xi, {\omega}_{\varphi})$ for the Futaki invariant computed with respect to the momentum-constructed conically singular metric $\omega_{\varphi}$ with cone angle $2 \pi \beta$ and with real analytic momentum profile $\varphi$ and $0< \beta <1$, evaluated against the generator $\Xi$ of fibrewise $\cx^*$-action of $\mathbb{X} = \prj (\mathcal{F} \oplus \cx)$, we have
\begin{align*}
\textup{Fut} (\Xi , \omega_{\varphi} ) &= \int_{\mathbb{X} \setminus D} \tau (S (\omega_{\varphi}) - \underline{S}(\omega_{\varphi})) \frac{\omega_{\varphi}^n}{n!} \\
&\ \ \ \ \ + 2 \pi (1 - \beta) \left(    b \int_M  \frac{\omega_M(b)^{n-1}}{(n-1)!} - \frac{\textup{Vol} (M ,  \omega_M (b))}{\textup{Vol} (\mathbb{X} , \omega_{\phi})}\int_{\mathbb{X}} \tau \frac{\omega_{\phi}^n}{n!} \right) 
\end{align*}
where $\omega_{\phi}$ is a smooth momentum-constructed metric in the same \kah class as $ \omega_{\varphi} $. In particular,
\begin{equation*}
\textup{Fut}_{D , \beta} (\Xi , \omega_{\varphi} ) = \int_{\mathbb{X} \setminus D} \tau (S (\omega_{\varphi}) - \underline{S}(\omega_{\varphi})) \frac{\omega_{\varphi}^n}{n!} .
\end{equation*}

\end{corollary}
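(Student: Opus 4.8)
The plan is to derive both identities by assembling the second item of Corollary \ref{intfutakisingth} with the invariance statements of Lemma \ref{summcmcssmkcviavhi}, so that no analytic input beyond the distributional formula already secured in Theorem \ref{scaldistmc} (through Lemma \ref{termtwomc}) is needed. First I would recall from the second item of Corollary \ref{intfutakisingth}, with the moment map $\tau$ serving as the holomorphy potential of the generator $\Xi$ of the fibrewise $\cx^*$-action, that
\begin{equation*}
\textup{Fut}(\Xi, \omega_{\varphi}) = \int_{X \setminus D} \tau (S(\omega_{\varphi}) - \underline{S}(\omega_{\varphi})) \frac{\omega_{\varphi}^n}{n!} + 2\pi(1-\beta)\left( b\,\textup{Vol}(M, \omega_M(b)) - \frac{\textup{Vol}(M, \omega_M(b))}{\textup{Vol}(X, \omega_{\varphi})} \int_X \tau \frac{\omega_{\varphi}^n}{n!} \right).
\end{equation*}
To reach the first displayed identity I would then invoke the $\sigma$-constancy hypothesis: items (2) and (3) of Lemma \ref{summcmcssmkcviavhi} give $\textup{Vol}(X, \omega_{\varphi}) = \textup{Vol}(X, \omega_{\phi})$ and $\int_X \tau\, \omega_{\varphi}^n/n! = \int_X \tau\, \omega_{\phi}^n/n!$ for a smooth reference profile $\phi$ in the same \kah class, while $b\,\textup{Vol}(M, \omega_M(b)) = b\int_M \omega_M(b)^{n-1}/(n-1)!$; substituting these into the distributional bracket reproduces the asserted formula verbatim.

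For the log Futaki invariant I would start from its definition with respect to the singular metric (cf.~\S \ref{intscmsfi}), which reads $\textup{Fut}_{D,\beta}(\Xi, \omega_{\varphi}) = \textup{Fut}(\Xi, \omega_{\varphi}) - 2\pi(1-\beta)\,E$ with correction bracket
\begin{equation*}
E = \int_D \tau \frac{\omega_{\varphi}^{n-1}}{(n-1)!} - \frac{\textup{Vol}(D, \omega_{\varphi})}{\textup{Vol}(X, \omega_{\varphi})} \int_X \tau \frac{\omega_{\varphi}^n}{n!},
\end{equation*}
and evaluate $E$. The point is that $\tau \equiv b$ on $D = \{ \tau = b \}$, while the restriction of $\omega_{\varphi}^{n-1}$ to $D$ is governed by $p^* \omega_M(b)^{n-1}$ exactly as in Lemma \ref{termtwomc} (and Remark \ref{averscaldxd}); this yields $\int_D \tau\, \omega_{\varphi}^{n-1}/(n-1)! = b\,\textup{Vol}(M, \omega_M(b))$ and $\textup{Vol}(D, \omega_{\varphi}) = \textup{Vol}(M, \omega_M(b))$. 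Hence $E$ coincides term by term with the distributional bracket appearing in $\textup{Fut}(\Xi, \omega_{\varphi})$ above, the two $2\pi(1-\beta)(\cdots)$ contributions cancel, and only $\int_{X \setminus D} \tau (S(\omega_{\varphi}) - \underline{S}(\omega_{\varphi}))\, \omega_{\varphi}^n/n!$ remains, as claimed.

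I expect the only genuinely delicate ingredient to be the legitimacy of the boundary quantities $\int_D \tau\, \omega_{\varphi}^{n-1}/(n-1)!$ and $\textup{Vol}(D, \omega_{\varphi})$, since $\omega_{\varphi}$ is singular precisely along $D$; but this is exactly what Theorem \ref{scaldistmc} and Lemma \ref{termtwomc} were designed to make rigorous, so I would rely on them rather than attempt any direct computation on $D$. A secondary dependence is on $\sigma$-constancy, which (via Lemma \ref{csqolemfbwv4pbps}) is what forces the two momentum profiles $\varphi$ and $\phi$ to share the same $\textup{Vol}(X, \cdot)$ and the same $\int_X \tau\,(\cdot)^n$; without it the conversion in the first step, and hence the clean cancellation, could fail. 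The finiteness of every integral that appears is already guaranteed by Lemma \ref{lemricmc} together with the standing assumption $0 < \beta < 1$, so the remainder is pure bookkeeping.
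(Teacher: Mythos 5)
Your proposal is correct and follows essentially the same route as the paper: the first identity is the second item of Corollary \ref{intfutakisingth} with the bracket rewritten via items (2) and (3) of Lemma \ref{summcmcssmkcviavhi}, and the ``in particular'' follows because the correction term of the log Futaki invariant, evaluated using $\tau\equiv b$ on $D$ and the identification $[D]\wedge\omega_{\varphi}^{n-1}=p^{*}\omega_M(b)^{n-1}$ from Lemma \ref{termtwomc}, coincides with and cancels the distributional bracket. Nothing further is needed.
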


We now wish to establish the analogue of the first item of Theorem \ref{mtinvfutthird}. We first of all have to estimate the Ricci and scalar curvature of the metric $\omega_{\varphi} + \ai \ddbar \psi$ for $\psi \in C^{\infty} (\mathbb{X}, \rl)$. We show that this is exactly the same as the ones for the conically singular metrics of elementary form.

\begin{lemma}
$\mathrm{Ric} (\omega_{\varphi} + \ai \ddbar \psi)$ and $S (\omega_{\varphi} + \ai \ddbar \psi)$ satisfy the estimates as given in Lemma \ref{lemrictyf}.
\end{lemma}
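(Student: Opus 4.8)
The plan is to show that adding the smooth form $\ai\ddbar\psi$ to $\omega_\varphi$ does not change the leading singular behaviour along $D$, so that $\omega_\varphi+\ai\ddbar\psi$ is, near $D$, a conically singular metric of exactly the same \emph{type} as an elementary-form metric; the estimates of \S\ref{eleestcstf} then transcribe directly. First I would record that, since $\psi\in C^\infty(X,\rl)$, its complex Hessian $\psi_{i\bar j}:=\partial^2\psi/\partial z_i\partial\bar z_j$ is smooth and bounded on all of $X$. Writing $g'$ for the metric of $\omega_\varphi+\ai\ddbar\psi$, so that $g'_{i\bar j}=(g_\varphi)_{i\bar j}+\psi_{i\bar j}$, and invoking Lemma \ref{lemestgmc} (in particular (\ref{expnsofvarphibetalem}) and (\ref{estdtau})), the bounded perturbation $\psi_{i\bar j}$ is dominated by the singular entries: $g'_{1\bar1}=O(|z_1|^{2\beta-2})$, $g'_{1\bar j}=O(1)+O(|z_1|^{2\beta-1})$ ($j\neq1$), and $g'_{i\bar j}=O(1)$ ($i,j\neq1$). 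This is precisely the structure displayed for the elementary-form metrics at the start of \S\ref{eleestcstf}, so the inverse-metric estimates of Lemma \ref{lemginv} (whose proof uses only this structure) apply to $g'$ verbatim, giving $g'^{1\bar1}=O(|z_1|^{2-2\beta})$, $g'^{1\bar j}=O(|z_1|)$, $g'^{i\bar j}=O(1)$.

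Next I would compute the volume form by expanding $(\omega_\varphi+\ai\ddbar\psi)^n=\sum_{k=0}^n\binom{n}{k}\omega_\varphi^{n-k}\wedge(\ai\ddbar\psi)^k$ and sorting the resulting $(n,n)$-forms according to what supplies the fibre direction $\ai dz_1\wedge d\bar z_1$. Using (\ref{estmcond}) and $\omega_\varphi^n=\tfrac{n}{\varphi}p^*\omega_M(\tau)^{n-1}\wedge d\tau\wedge d^c\tau$, every contribution in which the singular factor $\tfrac1\varphi d\tau\wedge d^c\tau$ (of order $|z_1|^{2\beta-2}$ by (\ref{estdtau}) together with $\varphi=O(|z_1|^{2\beta})$) provides this direction lands in a block of order $|z_1|^{2\beta-2}$, while every contribution in which instead the smooth part $\psi_{1\bar1}\,\ai dz_1\wedge d\bar z_1$ of $\ai\ddbar\psi$ provides it is bounded. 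Collecting terms exactly as in \S\ref{eleestcstf} yields
\[
(\omega_\varphi+\ai\ddbar\psi)^n=\left(|z_1|^{2\beta-2}\sum_j a_j|z_1|^{2\beta j}+\sum_j b_j|z_1|^{2\beta j}\right)\omega_0^n
\]
with $a_j,b_j$ smooth functions and $a_0$ bounded away from $0$, the positivity of $a_0$ coming from the standing hypothesis $\omega_\varphi+\ai\ddbar\psi>0$ on $X\setminus D$. Crucially this has the same shape as the elementary-form volume form and contains the bounded block $\sum_j b_j|z_1|^{2\beta j}$ that the pure momentum volume form (\ref{volfmc}) lacks; this is exactly why the perturbed metric obeys the estimates of Lemma \ref{lemrictyf} rather than the sharper Lemma \ref{lemricmc}.

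With the volume form in this form, the Ricci estimate follows verbatim from \S\ref{eleestcstf}: writing $\ric(g')_{i\bar j}=-\partial_i\partial_{\bar j}\log\bigl((\omega_\varphi+\ai\ddbar\psi)^n/\omega_0^n\bigr)$, discarding the term $\ddbar\log|z_1|^2=0$ on $X\setminus D$, and applying the logarithmic expansion (\ref{tepoovpmcmf}) gives $\ric(g')_{1\bar1}=O(1)+O(|z_1|^{-2\beta})+O(|z_1|^{2\beta-2})$, $\ric(g')_{1\bar j}=O(1)+O(|z_1|^{1-2\beta})+O(|z_1|^{2\beta-1})$ ($j\neq1$), and $\ric(g')_{j\bar k}=O(1)$ ($j,k\neq1$), i.e.\ the estimates of Lemma \ref{lemrictyf}. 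Contracting with the inverse-metric bounds from the first paragraph then yields $S(\omega_\varphi+\ai\ddbar\psi)=O(1)+O(|z_1|^{2-4\beta})$. The only genuinely non-formal point, and the step I expect to be the main obstacle, is the volume-form expansion: one must check carefully that the singular block keeps the exponent $|z_1|^{2\beta-2}$ and no worse (the cross terms of $d\tau\wedge d^c\tau$ and the mixed wedge products must not create a more singular leading term), and that $a_0$ stays bounded away from $0$, for which the positivity of $\omega_\varphi+\ai\ddbar\psi$ is essential. Everything else is a transcription of the arguments already carried out for the elementary-form metrics in \S\ref{eleestcstf}.
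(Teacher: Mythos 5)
Your proposal is correct and follows essentially the same route as the paper: both arguments show that adding the smooth form $\ai\ddbar\psi$ turns the volume form into one of the elementary-form shape $|z_1|^{2\beta-2}\bigl(O(1)+O(|z_1|^{2\beta})\bigr)+O(1)+O(|z_1|^{2-2\beta})$, after which the Ricci and scalar curvature estimates transcribe verbatim from \S\ref{eleestcstf} via the expansion (\ref{tepoovpmcmf}) and the vanishing of $\ddbar\log|z_1|^2$ off $D$. The only cosmetic difference is that the paper obtains the volume form by writing out the perturbed metric components explicitly from Lemma \ref{lemestgmc} and (\ref{estdtau}) rather than by binomially expanding $(\omega_\varphi+\ai\ddbar\psi)^n$, and your observation that positivity of $\omega_\varphi+\ai\ddbar\psi$ keeps the leading coefficient bounded away from zero is the right justification for the step the paper leaves implicit.
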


\begin{proof}
Choose a local coordinate system $(z_1 , \dots , z_n)$ around a point in $X$ so that $D$ is locally given by $\{ z_1 =0  \}$. Lemma \ref{lemestgmc} and the estimate (\ref{estdtau}) imply that we have
\begin{align*} 
\omega_{\varphi} &+ \ai \ddbar \psi \\
=& p^*\omega_M - \tau p^* \gamma + \frac{1}{\varphi} d \tau \wedge d^c \tau + \ai \sum_{i,j=1}^n \frac{\partial^2 \psi}{\partial z_i \partial \bar{z}_j} \ai dz_i \wedge d \bar{z}_j \\
=& |z_1|^{2 \beta -2} \left( F_{11} \  + |z_1|^{2- 2 \beta} \frac{\partial^2 \psi}{\partial z_1 \partial \bar{z}_1}\right) \ai d z_1 \wedge d \bar{z}_1 \\
&+ \sum_{j=2}^n |z_1|^{2 \beta -2} \left( F_{1j}   \bar{z}_1 + |z_1|^{2- 2 \beta}\frac{\partial^2 \psi}{\partial z_1 \partial \bar{z}_j}\right) \ai d z_1 \wedge d \bar{z}_j + c.c.\\
&+ \sum_{i,j=2}^n \left( F_{ij}  |z_1|^{2 \beta}  + \frac{\partial^2 \psi + \psi_M}{\partial z_i \partial \bar{z}_j}\right) \ai d z_i \wedge d \bar{z}_j 
\end{align*}
where $F_{ij}$'s stand for locally uniformly convergent power series in $|z_1|^{2 \beta}$ with coefficients in smooth functions which depend only on the base coordinates $(z_2 , \dots , z_n)$. We also wrote $\psi_M$ for the local \kah potential for $p^* \omega_M$. When we Taylor expand $\psi$ and $\psi_M$, we thus get $(\omega_{\varphi} + \ai \ddbar \psi)^n = |z_1|^{2 \beta-2} [ O(1) + O(|z_1|^{2 \beta}) + O(|z_1|^{2 - 2 \beta}) ] \prod_{i=1}^n (\ai d z_i \wedge d \bar{z}_i) $. Writing $\omega_0 : = \prod_{i=1}^n (\ai d z_i \wedge d \bar{z}_i)$, we thus get
\begin{equation*} 
\log \frac{(\omega_{\varphi} + \ai \ddbar \psi)^n}{\omega_0^n} = (\beta -1) \log |z_1|^2 +  O(1) + O(|z_1|^{2 \beta}) + O(|z_1|^{2 - 2 \beta})  .
\end{equation*}
This is exactly the same as (\ref{tepoovpmcmf}), from which Lemma \ref{lemrictyf} follows (since $\ddbar \log |z_1|^2 =0$ on $\mathbb{X} \setminus D$).
\end{proof}

Since that the holomorphy potential for $\Xi$ with respect to $\omega_{\varphi} + \ai \ddbar \psi$ is given by $\tau - \ai \Xi (\psi) =  O(|z_1|^{2 \beta}) +O(1)$ (cf.~\cite[Lemma 4.10]{sze}), it is now straightforward to check that the calculations in \S \ref{invfutwrtcsmtf} apply word by word. We thus get the second item of Theorem \ref{mtinvfutthird}.


\section*{Acknowledgements}

The author thanks Michael Singer for the suggestion to look at the momentum-constructed metrics; indeed Theorem \ref{mainthmmc} was originally conjectured by him. The author also thanks him for the advice on the proof of Lemma \ref{lemestgmc}, and also on the materials presented in \S \ref{mccmcsad} and \S \ref{sipftcsmotf}. This work forms part of the author's PhD thesis submitted to the University College London, which he thanks for the financial support. The author thanks Eleonora Di Nezza and Yuji Odaka for helpful discussions, and Joel Fine, Jason Lotay, and Julius Ross for helpful comments that improved this paper. He is also grateful to the anonymous referees for helpful suggestions. The author is supported by the ``Investissements d'Avenir'' French Government programme, managed by the French National Research Agency (Agence Nationale de la Recherche), in the framework of the Labex Archim\`ede [ANR-11-LABX-0033] and the A*MIDEX [ANR-11-IDEX-0001-02].

\bibliographystyle{amsplain}
\bibliography{2015_22_Conic}

\begin{flushleft}
Aix Marseille Universit\'e, CNRS, Centrale Marseille, \\
Institut de Math\'ematiques de Marseille, UMR 7373, \\
13453 Marseille, France. \\
Email: \verb|yoshinori.hashimoto@univ-amu.fr|
\end{flushleft}

\end{document}